\newcommand{\bD}{\pmb{D}}
\newcommand{\bC}{\pmb{C}}
\newcommand{\bR}{\pmb{R}}
\newcommand{\bA}{\pmb{A}}
\newcommand{\bB}{\pmb{B}}
\newcommand{\bF}{\pmb{F}}
\newcommand{\bG}{\pmb{G}}
\newcommand{\br}{\pmb{r}}
\newcommand{\bv}{\pmb{v}}
\newcommand{\bw}{\pmb{w}}
\newcommand{\bn}{\pmb{n}}
\newcommand{\bW}{\pmb{W}}
\newcommand{\iS}{\mathcal{S}}
\newcommand{\iT}{\mathcal{T}}
\newcommand{\iF}{\mathcal{F}}
\newcommand{\iN}{\mathcal{N}}
\newcommand{\iD}{\mathcal{D}}
\newcommand{\iR}{\mathcal{R}}
\newcommand{\iH}{\mathcal{H}}
\newcommand{\iB}{\mathcal{B}}
\newcommand{\Div}{\text{div}}
\newcommand{\half}{\frac{1}{2}}  
\newcommand{\prm}{\alpha}
\newcommand{\cil}{c_i(\lambda)}
\newcommand{\ril}{\br_i(\lambda)}
\newcommand{\cie}{c_i(\eta)}
\newcommand{\rie}{\br_i(\eta)}
\newcommand{\ciz}{c_i (\zeta)}
\newcommand{\riz}{\br_i(\zeta)}
\newcommand{\cix}{c_i(\xi)}
\newcommand{\rix}{\br_i(\xi)}
\newcommand{\tSigma}{\widetilde{\Sigma}}
\newcommand{\ratioD}{\mathfrak{D}}
\newtheorem{remark}[theorem]{Remark}
\definecolor{freeblue}{rgb}{0.25,0.41,0.88}
\definecolor{darkorange}{rgb}{1.0, 0.55, 0.0}
\definecolor{mediumred-violet}{rgb}{0.73, 0.2, 0.52}
\definecolor{americanrose}{rgb}{1.0,0.01,0.24}
\definecolor{deepmagenta}{rgb}{0.8, 0.0, 0.8}
\definecolor{palatinateblue}{rgb}{0.15, 0.23, 0.89}
\definecolor{ruby}{rgb}{0.88, 0.07, 0.37}
\definecolor{shamrockgreen}{rgb}{0.0, 0.62, 0.38}
\newcommand{\CJ}[1]{{\color{black}{#1}}}
\newcommand{\CJa}[1]{{\color{black}{#1}}}
\newcommand{\JR}[1]{{\color{black}{#1}}}
\newcommand{\Phuong}[1]{{\color{black}{#1}}}
\newcommand{\MK}[1]{{\color{black}{#1}}}
\newcommand{\All}[1]{{\color{black}{#1}}}
\newcommand{\JRo}[1]{{\color{black}{#1}}}
\title{Space-Time Domain Decomposition Methods for Diffusion Problems in Mixed Formulations} 
\author{Thi-Thao-Phuong Hoang \footnotemark[1]\ \footnotemark[4]   
\and J\'er\^ome Jaffr\'e\footnotemark[1]
\and  Caroline Japhet\footnotemark[3]\ \footnotemark[1]
\and  Michel~Kern\footnotemark[1]\ 
\and  Jean E. Roberts\footnotemark[1]}
  \thanks[sfn]{INRIA Paris-Rocquencourt, project-team Pomdapi, 78153 Le Chesnay Cedex, France Emails: \texttt{Phuong.Hoang\_Thi\_Thao@inria.fr}, \hspace{0.1cm}\texttt{Jerome.Jaffre@inria.fr},~\texttt{Caroline.Japhet@inria.fr}, \texttt{Michel.Kern@inria.fr}, \texttt{Jean.Roberts@inria.fr}}%
  \thanks{Partially supported by ANDRA, the French agency of nuclear waste management}
\thanks{Universit\'e Paris 13, UMR 7539, LAGA,
  99 Avenue J-B Cl\'ement, 93430 Villetaneuse, France Email: \texttt{japhet@math.univ-paris13.fr}} 
\begin{document}

\makeRR

\renewcommand{\thefootnote}{\fnsymbol{footnote}}

\footnotetext[1]{INRIA Paris-Rocquencourt, 78153 Le Chesnay Cedex, France~(\texttt{Phuong.Hoang\_Thi\_Thao@inria.fr},
  \texttt{Jerome.Jaffre@inria.fr}, \texttt{Michel.Kern@inria.fr},
  \texttt{Jean.Roberts@inria.fr})}
\footnotetext[3]{Universit\'e Paris 13, UMR 7539, LAGA,
  99 Avenue J-B Cl\'ement, 93430 Villetaneuse, France (\texttt{japhet@math.univ-paris13.fr}).}
\footnotetext[4]{Partially supported by ANDRA, the French agency of nuclear waste management}
\footnotetext[1]{Partially supported by GNR MoMaS.}

\renewcommand{\thefootnote}{\arabic{footnote}}

\pagestyle{myheadings}
\thispagestyle{plain}
%
%
%
%
\section{Introduction}
In many simulations of time-dependent physical phenomena, the domain of calculation is actually a union of several
subdomains with different physical properties and in which the time scales may be very different. In particular, this
is the case for the simulation of contaminant transport around a nuclear waste repository, where the time scales vary
over several orders of magnitude due to changes in the hydrogeological properties of the various geological layers
involved in the simulation. Consequently, it is inefficient to use a single time step throughout the entire domain.
\MK{The aim of this article is to investigate, in the context of mixed finite elements
\cite{brezzi1991mixed, RobertsThomas}, two global-in-time domain decomposition methods well-suited to
nonmatching time grids. Advantages of mixed methods include \JR{their} mass conservation property and a natural way to handle
heterogeneous and anisotropic diffusion tensors}.

The first method is a global-in-time substructuring method which \JR{uses a} Steklov-Poincar\'e type operator.
For stationary problems, this kind of method (see \cite{quarteroni1999domain,Toselli:DDM:2005,Mathew:DDM:2008})
is known to be efficient for problems with strong heterogeneity. It uses the so-called Balancing Domain Decomposition
(BDD) preconditioner introduced and analyzed in \cite{Mandel, Mandelweights}, and in \cite{CowsarBDD} for mixed
finite elements. In brief, the method "involves at each iteration the solution of a local problem with Dirichlet data,
a local problem with Neumann data and a "coarse grid" problem to propagate information globally and to insure the
consistency of the Neumann problems" \cite{CowsarBDD}.

The second method uses the Optimized Schwarz Waveform Relaxation (OSWR) approach. The OSWR algorithm is an
iterative method that computes in the subdomains \MK{over} the whole time interval, exchanging space-time boundary data
through more general (Robin or Ventcel) transmission operators in which coefficients can be optimized to improve
convergence rates. Introduced for parabolic and hyperbolic problems in \cite{Gander:1999:OCO}, \JR{it was} extended
to advection-reaction-diffusion problems with constant coefficients in \cite{VMartin}. The optimization of the Robin
(or Ventcel) parameters was analyzed in \cite{OSWR1d1,Bennequin} and extended to discontinuous coefficients
in \cite{OSWR1d2,OSWR2d}. Extensions to heterogeneous problems and non-matching time grids were introduced in
\cite{OSWR1d2,BlayoHJ}. More precisely, in \cite{BlayoHJ,halpern:2008:DGN}, discontinuous Galerkin (DG) for the
time discretization of the OSWR was introduced to handle non-conforming time grids, in one dimension with discontinuous
coefficients. This approach was extended to the bidimensional case in \cite{OSWRDG,OSWRDG2}. One of the advantages
of the DG method in time is that a rigorous analysis can be \JR{carried out} for any degree of accuracy and local time-stepping,
with different time steps in different subdomains (see \cite{OSWRDG,OSWRDG2}). A suitable time projection between
subdomains was \JR{obtained} by an optimal projection algorithm without any additional grid, as in \cite{Projection1d}.
\MK{These papers use Lagrange finite elements}. An extension to vertex-centered finite
\JR{volume} schemes and nonlinear problems is given in \cite{Haeberlein}.
The classical Schwarz algorithm for stationary problems with mixed finite elements was analyzed in
\cite{JeanRobinmixed}.\\

In this work, we extend the first \JR{method} to the case of unsteady problems and construct the time-dependent
Steklov-Poincar\'e operator. For parabolic problems, we need only the Neumann-Neumann preconditioner \cite{NNPrecond}
as there \MK{are no difficulties} concerning consistency for time-dependent Neumann problems. Of course one could make use
of the idea of the "coarse grid" to ensure \JR{a convergence rate independent} of the number of subdomains. However,
we haven't developed this idea here.
\MK{The convergence of a Jacobi iteration for the primal formulation is independently introduced and analyzed in
\cite{Kwok}.}

\MK{For the second} \JR{method}, an extension of the OSWR \MK{method} with Robin transmission
conditions to the mixed formulation is studied and a proof of convergence is given.
For each method a mixed formulation of an interface problem on the space-time
interfaces between subdomains \JR{is derived}. The well-posedness of the subdomain problems involved in the first approach
is addressed  in~\cite{Arbogast,BoffiGastaldi04},
through a Galerkin method and suitable a priori estimates.
\CJ{In this paper we present a more detailed version of the proof for Dirichlet and extend these results to
to prove the well-posedness of the
Robin subdomain problems involved in the OSWR approach.}
In \cite{Showalter10, Visintin09} demonstrations using semigroups are given
for nonlinear evolution problems. 
For strongly heterogeneous problems, it is natural to use different time steps in different subdomains
and we apply the projection algorithm in \cite{Projection1d} adapted to time discretizations to exchange
information on the space-time interfaces, \MK{for the lowest order DG method in time.}
We show the numerical behaviour of \MK{both} methods for \MK{different test cases suggested by ANDRA
for the simulation of underground nuclear waste storage}.
A preliminary version of this work was given in \cite{PhuongDD21}.

The remainder of this paper is organized as follows: in the next section we present the model problem in a mixed
formulation. We prove its well-posedness for \CJ{Dirichlet and} Robin boundary conditions in Section~\ref{sec:Robinbc}.
In Section~\ref{Sec:DD}, we
introduce the equivalent multidomain problem using nonoverlapping domain decomposition and describe the two solution
methods. A convergence proof for the OSWR algorithm for the mixed formulation is given. In Section~\ref{Sec:Time},
we consider the semi-discrete problems in time using different time grids in the subdomains.
\JR{In section~\ref{Sec:Num}, results of 2D numerical experiments}
\MK{showing that the methods preserve the order of the global scheme} \JR{are discussed}.

%
%
%
%
\section{A model problem}\label{Sec:Model}
In this section we define our model problem and show the existence and uniqueness of its solution. For an open,
bounded domain $ \Omega $ of $ \mathbb{R}^{d} \; (d=2,3) $ with Lipschitz boundary $ \partial \Omega $ and some fixed
time $ T > 0 $, we consider the following time-dependent diffusion problem
\begin{equation} \label{primal}
\omega \partial_{t} c + \nabla \cdot \left (-\bD \nabla c\right )
= f, \quad \text{in} \; \Omega \times \left (0,T\right),
\end{equation}
with boundary and initial conditions
\begin{align}
c  &= 0, \quad\; \text{on} \; \partial \Omega \times (0,T),\nonumber\\
c(\cdot , 0 )  &= c_{0}, \quad \text{in} \;  \Omega. \label{ic}
\end{align}
Here $ c $ is the concentration of \JR{a contaminant dissolved in a fluid, $ f $ the source term, $ \omega $
the porosity and $\bD$ a symmetric time independent diffusion tensor. We assume that $\omega$ is bounded
above and below by positive constants, $0 < \omega_{-} \le \omega(x) \le  \omega_{+} $, and that there exists
$\delta_{-}$ and $\delta_{+}$ positive constants such that 
$ \xi^{T} \bD^{-1}(x) \xi \geq \delta_{-} \vert \xi \vert ^{2}$, 
and $|\bD(x)\xi| \le \delta_+ |\xi|$, for a.e. $x \in \Omega$
and $\forall \xi \in \mathbb{R}^{d}.$}
For simplicity, we have imposed a homogeneous Dirichlet boundary condition on $ \partial \Omega $.
In practice, we may use non-homogeneous Dirichlet and Neumann boundary conditions for which the analysis
remains valid (see Section \ref{sec:Robinbc} for the extension to Robin boundary conditions).

We now rewrite~\eqref{primal} in an equivalent mixed form by introducing the vector field
$ \br := -\bD \nabla c $. This yields
\begin{equation} \label{mixed}
\left . \begin{array}{rll} \omega \partial_{t} c + \nabla \cdot \br
  & =f, & \text{in} \; \Omega \times \left (0,T\right ),\\
\nabla c + \bD^{-1} \br &=0,  & \text{in} \; \Omega \times \left (0,T\right ).
\end{array} \right .
\end{equation}
To write the variational formulation for~\eqref{mixed} (see~\cite{brezzi1991mixed, RobertsThomas}),
we introduce the spaces
$$
M=L^{2}\left (\Omega\right ) \, \; \text{and} \; \,  \Sigma = H\left (\Div, \Omega\right ).
$$
We multiply the first and second equations in~\eqref{mixed} by $ \mu \in M $ and $ \bv \in \Sigma  $ respectively,
then integrate over $ \Omega $ and apply Green's formula to obtain:\\

\noindent
\hspace{1mm}
\Phuong{\mbox{For a.e. $ t \in (0,T) $, find $ c(t) \in M$ and $ \br (t) \in \Sigma$ such that}}
\vspace{-2.5mm}
\begin{eqnarray} \label{variational-mixed}
\left 
.\begin{array}{rll} \frac{d}{dt}(\omega c, \mu ) + (\nabla \cdot \br, \mu )
  & = (f, \mu ), & \forall \mu \in M,\\
 - (\nabla \cdot \bv, c ) + (\bD^{-1} \br, \bv )
  & = 0, & \forall \bv \in \Sigma,
\end{array}\right .
\end{eqnarray}
together with initial condition~\eqref{ic}. 

Here and in the following,
\JR{we will use the convention that if $V$ is a space of functions, then we write $\pmb{V}$ for a
  space of vector functions having each component in $V$.
We also denote by}
$ (\cdot, \cdot ) $ the inner product in $ L^{2}(\Omega) $
or $ \pmb{L^{2}(\Omega)} $ and~$ \| \cdot \| $ the $ L^{2}(\Omega) $-norm or $ \pmb{L^{2}(\Omega)} $-norm.

The well-posedness of problem~\eqref{variational-mixed} is \JR{shown} in
\MK{in~\cite{Arbogast,BoffiGastaldi04}}, \JR{with an argument}
based on a Galerkin's method and a priori estimates:
\begin{theorem}\label{thrm1} 
If $ f $ is in $ L^{2} (0,T; L^{2}(\Omega)) $ and $ c_{0} $ in $H_{0}^{1} (\Omega) $
then \JR{problem~\eqref{variational-mixed},~\eqref{ic}} has a unique solution 
$$
\CJ{(c, \br) \in } \, H^{1}(0,T; L^{2}(\Omega)) \times \left (L^{2}(0,T; H(\emph{\Div}, \Omega))
  \cap L^{\infty} (0,T; \pmb{L^{2}(\Omega)})\right ).
$$ 
Moreover, if \Phuong{$\bD$ is in $\pmb{W^{1,\infty}(\Omega)}$},
$ f$ in $H^{1}(0,T; L^{2}(\Omega)) $ and $ c_{0} $ in $H^{2}(\Omega) \cap H^{1}_{0}(\Omega) $  then 
$$
\CJ{(c, \br) \in } \, W^{1, \infty} (0,T; L^{2}(\Omega)) \times \left (L^{\infty} (0, T; H(\emph{\Div}, \Omega))
  \cap H^{1}(0,T; \pmb{L^{2}(\Omega)})\right ).
$$
\end{theorem}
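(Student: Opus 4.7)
The plan is to prove Theorem~\ref{thrm1} by the standard Faedo--Galerkin method adapted to the mixed setting. First I would pick a family of finite-dimensional pairs $(M_h,\Sigma_h)\subset M\times\Sigma$ satisfying the discrete inf--sup condition (Raviart--Thomas on a regular mesh is a convenient choice) and pose the semidiscrete problem obtained by testing \eqref{variational-mixed} against $(\mu,\bv)\in M_h\times\Sigma_h$, with initial value $c_h(0)=\Pi_h c_0$ where $\Pi_h$ is the $L^2$-projection onto $M_h$. Because $\omega$ is uniformly positive and $\bD^{-1}$ is uniformly coercive, the second (elliptic) equation determines $\br_h(t)$ uniquely from $c_h(t)$, so the semidiscrete system reduces to a linear ODE in $c_h$ with positive definite mass matrix, and classical ODE theory yields a unique solution on the whole interval $[0,T]$.

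Next I would derive two a priori estimates uniform in $h$. The first is obtained by testing the evolution equation with $\mu=c_h$ and the elliptic one with $\bv=\br_h$ and adding: the divergence pairings cancel and one gets
\begin{equation*}
\tfrac{1}{2}\tfrac{d}{dt}(\omega c_h,c_h)+(\bD^{-1}\br_h,\br_h)=(f,c_h),
\end{equation*}
from which a Young/Gronwall argument yields bounds for $c_h$ in $L^{\infty}(0,T;L^{2}(\Omega))$ and for $\br_h$ in $L^{2}(0,T;\pmb{L^{2}(\Omega)})$. For the second estimate I use crucially that $\bD$ is time-independent: differentiating the elliptic equation in $t$ and testing with $\bv=\br_h$, then testing the evolution equation with $\mu=\partial_t c_h$, and adding yields
\begin{equation*}
(\omega\partial_t c_h,\partial_t c_h)+\tfrac{1}{2}\tfrac{d}{dt}(\bD^{-1}\br_h,\br_h)=(f,\partial_t c_h).
\end{equation*}
Together with a control of $\br_h(0)$ in $\pmb{L^{2}(\Omega)}$ coming from the discrete elliptic problem applied to $c_0\in H^1_0(\Omega)$, this produces $\partial_t c_h\in L^{2}(0,T;L^{2}(\Omega))$ and $\br_h\in L^{\infty}(0,T;\pmb{L^{2}(\Omega)})$. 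The first equation then reads $\nabla\!\cdot\br_h=f-\omega\partial_t c_h\in L^2(0,T;L^2(\Omega))$, giving $\br_h\in L^{2}(0,T;H(\Div,\Omega))$.

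The third step is to pass to the limit: after extracting weakly (or weakly-$*$) convergent subsequences in the spaces just identified, linearity of \eqref{variational-mixed} lets me show that the limit solves the continuous problem, the initial condition being recovered from the embedding $H^1(0,T;L^2(\Omega))\hookrightarrow C([0,T];L^{2}(\Omega))$. Uniqueness comes from applying the first energy identity to the difference of two solutions with $f=0$ and $c_0=0$. For the second regularity statement I would iterate the improved estimate after differentiating once more in time, using $f\in H^{1}(0,T;L^2(\Omega))$ and controlling $\partial_t c_h(0)$ through the evolution equation at $t=0$: this requires $\nabla\!\cdot\br(0)=f(0)-\omega\partial_t c(0)\in L^2(\Omega)$, which in turn uses $\br(0)=-\bD\nabla c_0\in H(\Div,\Omega)$, and it is precisely here that $c_0\in H^{2}(\Omega)\cap H^{1}_{0}(\Omega)$ and $\bD\in\pmb{W^{1,\infty}(\Omega)}$ enter.

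The main obstacle I expect is the construction and $h$-uniform control of the initial data $\br_h(0)$: only $c_0$ is prescribed, so one must quietly invoke the discrete elliptic equation at $t=0$ to define $\br_h(0)$ and bound its $\pmb{L^{2}(\Omega)}$-norm, respectively its $H(\Div,\Omega)$-norm, by $\|c_0\|_{H^1}$ and $\|c_0\|_{H^2}$. The remainder is a careful bookkeeping of energy estimates, but this compatibility between the parabolic evolution of $c$ and the elliptic constraint on $\br$ at the initial time is exactly what separates the two regularity levels in the statement and dictates the precise hypotheses on $c_0$ and $\bD$.
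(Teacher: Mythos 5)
Your proposal is correct and follows essentially the same route as the paper: a Faedo--Galerkin approximation reduced to a linear ODE system, the same two energy identities (testing with $(c,\br)$, then differentiating the constraint equation in time and testing with $(\partial_t c,\br)$), control of $\br(0)$ and $\partial_t c(0)$ through the elliptic equation at $t=0$ under exactly the hypotheses $c_0\in H^1_0(\Omega)$ resp.\ $c_0\in H^2(\Omega)\cap H^1_0(\Omega)$, $\bD\in\pmb{W^{1,\infty}(\Omega)}$, followed by weak compactness and an energy argument for uniqueness. The only departure is cosmetic: you use an inf-sup-stable pair with $\nabla\cdot\Sigma_h\subseteq M_h$, which makes the bound on $\|\nabla\cdot\br_h\|$ immediate, whereas the paper uses two independent abstract Hilbert bases and must expand $\nabla\cdot\br_m$ in the basis of $M$ and let $n\to\infty$ to obtain the same estimate.
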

\CJ{
\textit{Remark.}
We give the proof of Theorem~{thrm1} in the finite dimensional setting since some technical points (those involving
$\partial_t \pmb{r}$, or \pmb{r} at time $t=0$) can only be defined by their finite dimensional Galerkin
approximation.
This is not surprising given the differential-algebraic structure of system~\eqref{Robinvar}:
the second equation has no time derivative. In DAE theory it is well known that the algebraic equations
have to be differentiated \JR{a number of times} (this is what defines the index),
and that this imposes compatibility conditions between the initial data
(note that $\pmb{r}(0)$ is not given).
The index has been extended to PDEs, see for instance~\cite{Martinson:ADI:2000}.

The proof of Theorem~\ref{thrm1}
is carried out in several steps: in Lemma~\ref{lem:approx} we first construct solutions
of certain finite-dimensional approximations of~\eqref{variational-mixed}, then we derive
suitable energy estimates in Lemma~\ref{lem:estimates1} and prove the first part of the theorem. 
The higher regularity of the solution is obtained from the estimates given in
Lemma~\ref{lem:estimateDirichlet2}. 
%
%
%
%

We need first to introduce some notations:
Let $ \{ \mu_{n} \mid n\in \mathbb{N} \} $ be a Hilbert basis of $ M $ and $ \{ \bv_{n} \mid n\in \mathbb{N} \} $
be a Hilbert basis of $ \Sigma $. 
For each pair of positive integers $ n $ and $ m $, we denote by $ M_{n} $ the finite dimensional subspace
spanned by $ \{ \mu_{i} \}_{i=1}^{n} $, and  $ \Sigma_{m} $ the finite dimensional subspace spanned by
$ \{ \bv_{i} \}_{i=1}^{m} $. Now let $ c_{n}: [0,T] \rightarrow M_{n} $ and $ \br_{m}: [0,T] \rightarrow \Sigma_{m} $
be the solution of the following problem 
\begin{equation} \label{appro-mixed}
\left 
.\begin{array}{rll} (\omega \partial_{t} c_{n}, \mu_{i} ) + (\nabla \cdot \br_{m}, \mu_{i} )
  & = (f(t), \mu_{i} ), & \forall i=1, \hdots, n, \\
- (\nabla \cdot \bv_{j}, c_{n} ) + (\bD^{-1} \br_{m}, \bv_{j} )
  & = 0, & \forall j=1, \hdots, m, \\
\end{array}\right .
\end{equation}
with
\begin{equation} \label{ic-n}
(c_{n}(0), \mu_{i}) = (c_{0}, \mu_{i}), \quad \forall i=1, \hdots, n.
\end{equation}
%
%
\begin{lemma} \label{lem:approx}
\emph{(Construction of approximate solutions)} For each pair $ (n,m) \in \mathbb{N}^{2}$, $n,m \geq 1 $,
there exists a unique solution $ (c_{n}, \br_{m}) $ to problem~\eqref{appro-mixed}.
\end{lemma}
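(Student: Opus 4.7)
The plan is to exploit the differential-algebraic structure of~\eqref{appro-mixed} by expanding $c_n$ and $\br_m$ in the chosen bases, eliminating the algebraic variable, and then invoking classical ODE theory on the resulting linear system.

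First I would write $c_n(t) = \sum_{i=1}^n \alpha_i(t)\mu_i$ and $\br_m(t) = \sum_{j=1}^m \beta_j(t)\bv_j$. Plugging these into~\eqref{appro-mixed} turns the system into
\[
A\dot{\alpha}(t) + B\beta(t) = F(t), \qquad -B^T\alpha(t) + D\beta(t) = 0,
\]
where $A_{ki}=(\omega\mu_i,\mu_k)$, $B_{kj}=(\nabla\cdot\bv_j,\mu_k)$, $D_{lj}=(\bD^{-1}\bv_j,\bv_l)$ and $F_k(t)=(f(t),\mu_k)$. The matrix $A$ is symmetric positive definite because $\omega\geq\omega_->0$ and $\{\mu_i\}$ is a linearly independent family in $L^2(\Omega)$; similarly, $D$ is symmetric positive definite thanks to the coercivity assumption $\xi^T\bD^{-1}\xi\geq\delta_-|\xi|^2$ and the linear independence of $\{\bv_j\}$ in $\pmb{L^2(\Omega)}$.

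The second (algebraic) equation then yields $\beta(t) = D^{-1}B^T\alpha(t)$, which I would substitute into the first equation to obtain the reduced ODE
\[
A\dot{\alpha}(t) + BD^{-1}B^T\alpha(t) = F(t),
\]
a linear first-order system with constant invertible mass matrix $A$. The initial condition~\eqref{ic-n} uniquely fixes $\alpha(0)\in\mathbb{R}^n$ by the invertibility of the Gram matrix $A$ (indeed $(c_n(0),\mu_i)=\sum_k\alpha_k(0)(\mu_k,\mu_i)$ determines the $\alpha_k(0)$). Since $f\in L^2(0,T;L^2(\Omega))$ gives $F\in L^2(0,T;\mathbb{R}^n)$, the Cauchy–Lipschitz theorem (in its $L^2$-right-hand-side version) yields a unique absolutely continuous solution $\alpha$ on $[0,T]$, and then $\beta=D^{-1}B^T\alpha$ is uniquely determined and inherits the regularity of $\alpha$.

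The only point requiring slight care is the well-definedness and invertibility of $D$: one must check that $\{\bv_j\}_{j=1}^m$ remains linearly independent in $\pmb{L^2(\Omega)}$, which is automatic from its linear independence in $\Sigma=H(\mathrm{div},\Omega)$ together with the fact that $\pmb{L^2(\Omega)}$ coercivity of $\bD^{-1}$ is used to bound the quadratic form from below. No real obstacle is expected; the construction is essentially the reduction of a semi-explicit DAE of index~$1$ to a standard linear ODE, and uniqueness follows immediately from the linearity of the reduced system.
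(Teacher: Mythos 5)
Your proposal is correct and follows essentially the same route as the paper: expand in the finite bases, observe that the mass matrix $(\omega\mu_j,\mu_i)$ and the matrix $(\bD^{-1}\bv_j,\bv_i)$ are symmetric positive definite, eliminate the algebraic variable $\br_m$ via the second equation, and solve the resulting linear first-order ODE system with $L^2$ right-hand side. The only cosmetic difference is that the paper takes $\{\mu_i\}$ to be a Hilbert basis so the initial condition~\eqref{ic-n} fixes $\bC_n(0)$ trivially, whereas you invoke invertibility of the Gram matrix, which amounts to the same thing.
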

\begin{proof}
We introduce the following notations \vspace{5pt}\\
$ (\bF_{n}(t))_{i}=(f(t),\mu_{i}) , \;
  (\bC_{0})_{i}=(c_{0},\mu_{i}), \;
  (\bW_{n})_{ij} = (\omega \mu_{j}, \mu_{i}),  \; \forall \, 1 \leq i, j \leq n $, \vspace{6pt}
\\
$ (\bA_{m})_{ij} = (\bD^{-1} \bv_{j}, \bv_{i}), \; \forall \, 1 \leq i, j \leq m,  $
$ (\bB_{nm})_{ij} = (\nabla \cdot \bv_{j}, \mu_{i}), \; \forall \, 1 \leq i \leq n, 1 \leq j \leq m.$\vspace{5pt} 
\\
We also denote by $ \bC_{n} (t) $ the vector of degrees of freedom of $ c_{n} (t) $
with respect to the basis $ \{ \mu_{i} \}_{i=1}^{n} $ and  $ \bR_{m}(t) $ that of $ \br_{m}(t) $
with respect to the basis $ \{ \bv_{i}\}_{i=1}^{m} $. 
With this notation, (\ref{appro-mixed}) may be rewritten as
\begin{subequations} \label{appro-matrix}
\begin{align}
\bW_{n} \frac{d \bC_{n}}{dt} (t) + \bB_{nm} \bR_{m}(t) & = \bF_{n}(t), \label{1-matrix}\\
-\bB_{nm}^{T} \bC_{n} (t) + \bA_{m} \bR_{m}(t) & = 0, \label{2-matrix}\\
\bC_{n}(0) & = \bC_{0}.  \label{ic-matrix}
\end{align}
\end{subequations}
As $ \bA_{m} $ is a symmetric and positive definite square matrix of size $ m $
(because of the assumptions concerning $ \bD $), $ \bA_{m} $ is invertible. Thus (\ref{2-matrix}) implies
\begin{equation} \label{UbyP}
\bR_{m}(t) = \bA_{m}^{-1} \bB_{nm}^{T} \bC_{n} (t).
\end{equation}
Substituting (\ref{UbyP}) into (\ref{1-matrix}) and as $ \bW_{n} $ is invertible, we obtain
\begin{equation} \label{ODEs}
\frac{d \bC_{n}}{dt}(t) + \bW_{n}^{-1}\bB_{nm} \bA_{m}^{-1}\bB_{nm}^{T} \bC_{n}(t)
  = \bW_{n}^{-1}\bF_{n}(t),  \quad \text{for a.e.} \; t \in [0,T].
\end{equation}
This is a system of $ n $ linear ODEs of order $ 1 $ with initial condition~(\ref{ic-matrix}).
Hence, there exists a unique function $ \bC_{n}\in \left (C([0,T])\right )^{n} $ with
$ \frac{d \bC_{n}}{dt} \in \left (L^{2}(0,T)\right )^{n}$ satisfying~(\ref{ODEs}) and~(\ref{ic-matrix})
(see~\cite{evans1998partial}). From~(\ref{UbyP}) we obtain $ \bR_{m} \in \left (C([0,T])\right )^{m} $
such that $ \frac{d\bR_{m}}{dt}~\in~\left (L^{2}(0,T)\right )^{m} $ and then  $ (c_{n}, \br_{m}), $
which is the unique solution to~(\ref{appro-mixed}).  
\qquad
\end{proof} 

%
%
%
%
In the next step, we derive some suitable a priori estimates similar to those given in \cite{Arbogast} but in a more detailed manner.

\begin{lemma} \label{lem:estimates1}
There exists a constant $C$ independent of $n$ and $m$ such that  
\begin{multline*}
\| c_{n} \|_{L^{\infty}(0,T; L^{2}(\Omega))} + \| \partial_{t} c_{n} \|_{L^{2}(0,T; L^{2}(\Omega))} + \| \br_{m} \|_{L^{\infty} (0,T; \mathbf{L^{2}(\Omega)})} + \| \br_{m} \|_{L^{2}(0,T; H(\emph{\Div}, \Omega))} \\
\hspace{2.2cm} \leq C (\|c_{0}\|_{H^{1}_{0}(\Omega)} + \| f\| _{L^{2}(0,T; L^{2}(\Omega))}), \quad
\forall  n, m \geq 1 . 
\end{multline*}
\end{lemma}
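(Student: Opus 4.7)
The plan is a two-stage energy argument in the spirit of Arbogast, followed by a separate divergence estimate. Because $(c_n,\br_m)$ is $C^1$ in time by Lemma \ref{lem:approx}, all the test-function manipulations and time differentiations below are justified, and the constants throughout depend only on $\omega_{\pm}$, $\delta_{\pm}$, $T$, and $\Omega$, not on $n$ or $m$.

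For the basic energy estimate, I take $\mu = c_n(t)$ in the first equation of \eqref{appro-mixed} and $\bv = \br_m(t)$ in the second; adding makes the terms $\pm(\nabla\cdot\br_m,c_n)$ cancel and leaves
\begin{equation*}
\tfrac{1}{2}\tfrac{d}{dt}(\omega c_n,c_n) + (\bD^{-1}\br_m,\br_m) = (f,c_n).
\end{equation*}
Coercivity of $\omega$ and $\bD^{-1}$, Young's inequality on the right-hand side, Gronwall's lemma, and the bound $\|c_n(0)\|\leq\|c_0\|$ coming from \eqref{ic-n} then control $\|c_n\|_{L^{\infty}(0,T;L^{2}(\Omega))}$ and $\|\br_m\|_{L^{2}(0,T;\pmb{L^{2}(\Omega)})}$. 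For the differentiated estimate, I differentiate the second equation of \eqref{appro-mixed} in time, test with $\bv=\br_m(t)$, and add to the first equation tested with $\mu = \partial_t c_n(t)$; the cross divergence terms cancel again, yielding
\begin{equation*}
(\omega\partial_t c_n,\partial_t c_n) + \tfrac{1}{2}\tfrac{d}{dt}(\bD^{-1}\br_m,\br_m) = (f,\partial_t c_n).
\end{equation*}
Integrating in $t$ and absorbing $(f,\partial_t c_n)$ with Young's inequality furnishes the bounds on $\|\partial_t c_n\|_{L^{2}(0,T;L^{2}(\Omega))}$ and $\|\br_m\|_{L^{\infty}(0,T;\pmb{L^{2}(\Omega)})}$, \emph{provided} $\|\br_m(0)\|$ is controlled.

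The initial-value bound on $\br_m(0)$ is obtained by evaluating the second equation of \eqref{appro-mixed} at $t=0$ with $\bv = \br_m(0)$, giving $(\bD^{-1}\br_m(0),\br_m(0)) = (\nabla\cdot\br_m(0),c_n(0))$. This is precisely the place where the hypothesis $c_0\in H^{1}_{0}(\Omega)$ is used. Choosing the Hilbert bases compatibly so that $\nabla\cdot\Sigma_m\subset M_n$ (the standard mixed-method compatibility, typified by a Raviart--Thomas pair), the fact that $c_n(0)$ is the $L^{2}$-projection of $c_0$ onto $M_n$ upgrades the right-hand side to $(\nabla\cdot\br_m(0),c_0)$, and Green's formula together with $c_0|_{\partial\Omega}=0$ transforms this into $-(\br_m(0),\nabla c_0)$. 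Cauchy--Schwarz and the coercivity of $\bD^{-1}$ then yield $\|\br_m(0)\|\leq C\|c_0\|_{H^{1}_{0}(\Omega)}$. Under the same compatibility, $\nabla\cdot\br_m(t)\in M_n$ is an admissible test in the first equation of \eqref{appro-mixed}, producing $\|\nabla\cdot\br_m\|^{2} = (f-\omega\partial_t c_n,\nabla\cdot\br_m)$ and hence $\|\nabla\cdot\br_m\|\leq\|f\|+\omega_{+}\|\partial_t c_n\|$ pointwise in $t$; squaring, integrating, and invoking the bound on $\partial_t c_n$ from the previous step closes the estimate for $\|\br_m\|_{L^{2}(0,T;H(\Div,\Omega))}$.

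The main obstacle is the bound on $\br_m(0)$: it requires integration by parts against the discrete initial datum $c_n(0)$, which in turn forces essential use of the $H^{1}_{0}$-regularity of $c_0$ and the compatibility $\nabla\cdot\Sigma_m\subset M_n$ between the Galerkin subspaces (a condition either imposed on the bases from the outset, or realized by extracting a suitable sequence of index pairs $(n,m)$ along which the limiting argument will be carried out).
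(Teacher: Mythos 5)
Your proof is correct and follows the same two-stage energy argument as the paper: test with $(c_n,\br_m)$ and use Gronwall for the $L^\infty$--$L^2$ bound on $c_n$ and the $L^2$ bound on $\br_m$; then differentiate the second equation in time, test with $(\partial_t c_n,\br_m)$, and reduce everything to controlling $\|\br_m(0)\|$; finally estimate $\nabla\cdot\br_m$ from the first equation. The one place where you genuinely diverge is the technical device for handling test data that need not lie in the Galerkin subspaces. The paper keeps the two Hilbert bases completely arbitrary and resolves both problematic steps by passing to the limit $n\to\infty$ with $m$ fixed: for $\br_m(0)$ it uses $c_n(0)\to c_0$ in $L^2(\Omega)$ before applying Green's formula, and for the divergence bound it truncates the expansion $\nabla\cdot\br_m=\sum_i \xi_m^i\mu_i$ at level $n$, tests with the truncation, and lets $n\to\infty$ so the truncation converges to $\nabla\cdot\br_m$ itself. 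You instead impose the mixed-method compatibility $\nabla\cdot\Sigma_m\subset M_n$, which makes $c_n(0)=P_{M_n}c_0$ and the admissibility of $\mu=\nabla\cdot\br_m(t)$ do the work directly; this is cleaner and closer to what one does at the discrete (Raviart--Thomas) level. Two caveats: first, this condition cannot be ``realized by extracting a suitable sequence of index pairs'' from arbitrary Hilbert bases --- for generic bases $\nabla\cdot\bv_j$ has infinitely many nonzero components in the $\mu_i$'s, so the compatibility must be built into the construction of $\{\mu_i\}$ (e.g.\ by orthonormalizing a sequence containing the divergences $\nabla\cdot\bv_j$ interleaved with a dense family), after which the estimate holds only for pairs $(n,m)$ with $n\ge n(m)$; second, you should note that this restricted family of pairs still suffices for the subsequent weak-compactness and density arguments in the proof of Theorem~\ref{thrm1}, which it does. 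With that construction made explicit, your argument closes the lemma with the same constants and the same use of the hypothesis $c_0\in H^1_0(\Omega)$ as the paper.
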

\begin{proof}
We prove this lemma by deriving successively the estimates on $ c_{n} $, $ \partial_{t} c_{n} $ and $ \br_{m} $,
and finally on $ \nabla \cdot \br_{m} $ for the $ H(\Div, \Omega) $-norm. 
%
%
%
\\
$\bullet$ Let $ n, m \geq 1 $ and take $ c_{n}(t)  \in M_{n} $ and $ \br_{m}(t) \in \Sigma_{m} $ as the test functions
in~(\ref{appro-mixed})
\begin{equation*}
\begin{array}{rl}
(\omega \partial_{t} c_{n}, c_{n}) + (\nabla \cdot \br_{m}, c_{n}) &= (f, c_{n}), \\
- (\nabla \cdot \br_{m}, c_{n})+(\bD^{-1} \br_{m}, \br_{m})  & = 0.
\end{array}
\end{equation*}
Adding these two equations, we obtain
$$ (\omega \partial_{t} c_{n}, c_{n})+ (\bD^{-1} \br_{m}, \br_{m} ) = (f,c_{n}).
$$
Using the properties of $ \omega $ and $ \bD $, and applying the Cauchy-Schwarz inequality, we get
\begin{align*}
(\omega \partial_{t} c_{n}, c_{n}) =\frac{1}{2}\frac{d}{dt} (\omega c_{n}(t), c_{n}(t)) & \geq \frac{\omega_{-}}{2}\frac{d}{dt} \| c_{n}(t) \|^{2}, \\
(\bD^{-1}\br_{m} (t), \br_{m}(t)) & \geq \delta_{-} \| \br_{m} (t)\|,\\
(f (t) ,c_{n}(t)) & \leq \| f(t)\| \, \| c_{n}(t) \| \leq \frac{1}{2\omega_{-}}\| f(t)\|^{2} +\frac{\omega_{-}}{2} \| c_{n}(t)\|^{2}.
\end{align*}
As $ \omega_{-} >0 $, we deduce that
$$ \frac{d}{dt} \|c_{n}(t)\|^{2} + \frac{2\delta_{-}}{\omega_{-}} \|\br_{m} (t) \|^{2} \leq \frac{1}{\omega_{-}^{2}} \| f (t) \|^{2} + \| c_{n}(t) \|^{2}.
$$
Integrating this inequality over $ (0,t) $ for $ t \in [0,T] $, we find
\begin{equation} \label{gronwall1}
\| c_{n}(t)\|^{2} +  \frac{2\delta_{-}}{\omega_{-}}  \int_{0}^{t} \| \br_{m} (s) \|^{2} ds  \leq \| c (0) \|^{2} +\frac{1}{\omega_{-}^{2}}  \int_{0}^{t} \| f(s)\|^{2} ds + \int_{0}^{t} \| c_{n}(s) \|^{2} ds,  
 \end{equation}
since $ \| c_{n} (0) \|^{2}  = \sum_{i=1}^{n} (c_{0}, \mu_{i})^{2} \leq \sum_{i=1}^{\infty} (c_{0}, \mu_{i})^{2} = \| c_{0}\|^{2} $. \vspace{5pt}\\
Thus (\ref{gronwall1}) implies
$$ \| c_{n}(t)\|^{2}  \leq ( \|c_{0}\|^{2} +  \frac{1}{\omega_{-}^{2}}  \| f\|^{2}_{L^{2}(0,T; L^{2}(\Omega))}) + \int_{0}^{t} \| c_{n}(s)\|^{2} ds. $$
Applying Gronwall's lemma, there exists $ C $ independent of $ n $ or $ m $ such that
\begin{equation} \label{estimate-c}
\| c_{n}\|^{2}_{L^{\infty}(0,T; L^{2}(\Omega))}  \leq C ( \|c_{0}\|^{2} + \| f\|^{2}_{L^{2}(0,T; L^{2}(\Omega))}),
\end{equation}
%
%
%
%
\\
$\bullet$ Now we derive the estimate for $ \partial_{t} c_{n} $: Taking $ \partial_{t} c_{n} \in M_{n}  $ as the test function in the first equation of~(\ref{appro-mixed}), we obtain
\begin{equation} \label{1ii}
 (\omega \partial_{t} c_{n}, \partial_{t} c_{n}) + (\nabla \cdot \br_{m}, \partial_{t} c_{n} )=(f, \partial_{t} c_{n}). 
\end{equation}
Differentiating the second equation of~(\ref{appro-mixed}) with respect to $ t $, we find
\begin{equation} \label{2ii}
- (\nabla \cdot \bv, \partial_{t} c_{n}) + (\bD^{-1} \partial_{t} \br_{m}, \bv )  = 0, \quad \forall \bv \in \Sigma_{m}.
\end{equation}
Then we take $ \br_{m} $ as the test function in~(\ref{2ii})
\begin{equation}\label{3ii}
(\bD^{-1} \partial_{t} \br_{m}, \br _{m} ) - (\nabla \cdot \br _{m}, \partial_{t} c_{n}) = 0.
\end{equation}
Adding (\ref{1ii}) and (\ref{3ii}), we see that
$$ (\omega \partial_{t} c_{n}, \partial_{t} c_{n}) +(\bD^{-1} \partial_{t} \br_{m}, \br_{m} ) = (f, \partial_{t} c_{n}).
$$
As $ \bD $ is symmetric and positive definite, by applying the Cauchy-Schwarz inequality to the right hand side as well as using the property of $ \omega $, we obtain
\begin{equation} \label{4ii}
\omega_{-} \| \partial_{t} c_{n} (t)\|^{2} + \frac{d}{dt} \| \sqrt{\bD^{-1}}\br_{m} (t) \|^{2}  \leq \frac{1}{\omega_{-}}\| f(t) \|^{2}.
\end{equation}
Integrating (\ref{4ii}) over $ (0,t) $ for $ t \in [0,T] $, we find
\begin{equation} \label{5ii}
\omega_{-} \int_{0}^{t} \| \partial_{t} c_{n} (s)\|^{2} ds + \|\sqrt{\bD^{-1}} \br_{m} (t) \|^{2}  \leq \|\sqrt{\bD^{-1}} \br_{m} (0)\|^{2} + \frac{1}{\omega_{-}}\int_{0}^{t} \| f(s)\|^{2} ds.
\end{equation}
To bound $ \| \br_{m}(0) \| $, we take $ \br_{m} \in \Sigma_{m} $ as the test function in the second equation of~(\ref{appro-mixed}) and let $ t=0 $
\begin{equation} \label{7ii}
(\bD^{-1} \br_{m} (0), \br_{m} (0)) = (\nabla \cdot \br_{m} (0), c_{n}(0)).
\end{equation}
Noting that (\ref{7ii}) holds for all $ n, m \geq 1 $, we bound the left-hand side as before and let $ n \rightarrow \infty $. Since $ c_{n}(0) \rightarrow c_{0} $ in $ L^{2}(\Omega) $ and $ c_{0} \in H^{1}_{0}(\Omega) $, we have by Green's formula
\begin{equation*}
\delta_{-} \| \br_{m}(0) \|^{2} \leq (\nabla \cdot \br_{m} (0), c_{0})=(\br_{m} (0),-\nabla c_{0})  \leq \| \br_{m}(0) \| \; \| \nabla c_{0} \|. 
\end{equation*}
Thus
\begin{equation} \label{8ii}
\| \br_{m}(0) \| \leq C \| c_{0}\|_{H^{1}_{0}(\Omega)}.
\end{equation}
This along with (\ref{5ii}) yields
\begin{equation} \label{estimate-ct}
\| \partial_{t} c_{n} \|^{2}_{L^{2}(0,T; L^{2}(\Omega))} + \| \br_{m} \|^{2}_{L^{\infty} (0,T; \mathbf{L^{2}(\Omega)})} \leq C (\| c_{0}\|^{2}_{H_{0}^{1}(\Omega)} + \| f \|^{2}_{L^{2}(0,T; L^{2}(\Omega))}), \; \forall n, m \geq 1. 
\end{equation}
%
%
%
%
There only remains to show that $ \| \nabla \cdot \br_{m}\|_{L^{2}(0,T; \mathbf{L^{2}(\Omega)})} $ is bounded. 
\\
$\bullet$ Fixing $ m \geq 1 $, as $ \nabla \cdot \br_{m} (t) \in  M $ we can write
\begin{equation} \label{1iii}
\nabla \cdot \br_{m} (t) = \sum_{i=1}^{\infty} \xi_{m}^{i}(t) \mu_{i}, \; \text{for a.e. $ t \in (0,T) $}, 
\end{equation}
where $  \xi_{m}^{i}(t) = (\nabla \cdot \br_{m} (t) , \mu_{i})$. 
Now we fix $ n \geq 1 $ and multiply the first equation of~(\ref{appro-mixed}) by $ \xi_{m}^{i} (t) $,
sum over~$i=1,\hdots,n$, we see that
\begin{align}\label{2iii}
 (\nabla \cdot \br_{m},  \sum_{i=1}^{n} \xi_{m}^{i} \mu_{i} )
 &\leq \frac{1}{2}  ( \| f \| + C\| \partial_{t} c_{n}\| )^{2} + \frac{1}{2}\| \sum_{i=1}^{n} \xi_{m}^{i} \mu_{i} \|^{2}.  
\end{align}
Integrating with respect to time and recalling (\ref{estimate-ct}), we find
\begin{equation*}
\int_{0}^{T} (\nabla \cdot \br_{m},  \sum_{i=1}^{n} \xi_{m}^{i} \mu_{i} )dt \leq   C  ( \| f \|_{L^{2}(0,T; L^{2}(\Omega))}^{2} + \| c _{0}\|_{H^{1}_{0}(\Omega)} ^{2}) + \frac{1}{2} \int_{0}^{T} \| \sum_{i=1}^{n} \xi_{m}^{i} \mu_{i} \| ^{2} dt.
\end{equation*}
Let $ n \rightarrow \infty $ and recall (\ref{1iii}), we obtain 
$$
\int_{0}^{T} \| \nabla \cdot \br_{m}\| ^{2} dt \leq  C ( \| f \|_{L^{2}(0,T; L^{2}(\Omega))}^{2} + \| c _{0}\|_{H^{1}_{0}(\Omega)} ^{2}) + \frac{1}{2} \int_{0}^{T} \| \nabla \cdot \br_{m}\| ^{2} dt.
$$
Thus
\begin{equation*} 
 \| \nabla \cdot \br_{m} \|_{L^{2}(0,T; L^{2}(\Omega))}^{2} \leq C (\| f\|^{2}_{L^{2}(0,T; L^{2}(\Omega))} + \| c _{0}\|^{2}_{H^{1}_{0}(\Omega)}).
\end{equation*}
On the other hand, by recalling inequality (\ref{gronwall1}) with $ t=T $ and by (\ref{estimate-c}), we find
\begin{equation*}  
\| \br_{m} \|_{L^{2}(0,T; \mathbf{L^{2}(\Omega)})}^{2} \leq C ( \|c_{0}\|^{2} + \| f\|^{2}_{L^{2}(0,T; L^{2}(\Omega))}). 
\end{equation*}
Hence, 
\begin{align*}
\| \br_{m} \|^{2}_{L^{2}(0,T; H(\Div, \Omega))} 
& = \| \br_{m} \|^{2}_{L^{2}(0,T; (L^{2}(\Omega))^{2})} + \| \nabla \cdot \br_{m} \|^{2}_{L^{2}(0,T; L^{2}(\Omega))} \\
&\leq C (\| f\|^{2}_{L^{2}(0,T; L^{2}(\Omega))} + \| c _{0}\|^{2}_{H^{1}_{0}(\Omega)}), \quad \forall m \geq 1, 
\end{align*}
which ends the proof of Lemma \ref{lem:estimates1}.
\qquad
\end{proof}
%
%
%
%

We now prove the first part of Theorem~\ref{thrm1}: there exists a unique solution $ (c, \br) $ in $ H^{1}(0,T; L^{2}(\Omega)) \times L^{2}(0,T; H(\emph{\Div}, \Omega)) \cap L^{\infty} (0,T; \mathbf{L^{2}(\Omega)})  $ of problem (\ref{mixed}).
\begin{proof} The proof of the first part of Theorem~\ref{thrm1} follows the following steps:.\\
$\bullet$ Lemma \ref{lem:estimates1} implies that for the sequences $ \{c_{n}\}_{n=1}^{\infty} $
and~$\{\br_{m}\}_{m=1}^{\infty} $ defined by (\ref{appro-mixed}) and (\ref{ic-n}),  $ \{c_{n}\}_{n=1}^{\infty} $ is bounded
in $ L^{2}(0,T; L^{2}(\Omega)) $, $ \{\partial_{t} c_{n}\}_{n=1}^{\infty} $ is bounded in $ L^{2}(0,T; L^{2}(\Omega)) $ and
$ \{\br_{m}\}_{m=1}^{\infty} $ is bounded in $ L^{2}(0,T; H(\Div, \Omega)) \cap L^{\infty}(0,T; \mathbf{L^{2}(\Omega)})$. 
Thus, there exist subsequences, still denoted by $ \{c_{n} \}_{n=1}^{\infty} $ and  $ \{\br_{m}\}_{m=1}^{\infty} $ and functions $ c \in L^{2}(0,T; L^{2}(\Omega)) $ with $ \partial_{t} c  \in L^{2}(0,T; L^{2}(\Omega))$
and $ \br \in L^{2} (0,T; H(\Div, \Omega)) \cap L^{\infty} (0,T; \mathbf{L^{2}(\Omega)}) $ 
such that
\begin{equation} \label{weakconv}
\begin{array}{l}
c_{n} \rightharpoonup c  \; \text{in} \;   L^{2}(0,T; L^{2}(\Omega)), \\
\partial_{t} c_{n} \rightharpoonup \partial_{t} c \; \text{in} \;   L^{2}(0,T; L^{2}(\Omega)), \\
\br_{m} \rightharpoonup \br \; \text{in} \;  L^{2}(0,T; H(\Div,\Omega)) .
\end{array}
\end{equation}
$\bullet$ Next let $ \eta \in C^{1}([0,T]; M_{n_{0}}) $, $ \bw \in C^{1} ([0,T]; \Sigma_{m_{0}})$
for $ n_{0}, m_{0} \geq 1 $.   
We choose $ n \geq n_{0} $ and $ m \geq m_{0} $, take $ \eta $ and $ \bw $ as the test functions in (\ref{appro-mixed}) and then integrate with respect to time 
\begin{equation}  \label{1stn}
\begin{array}{rl}
\int_{0}^{T} (\omega \partial_{t} c_{n} , \eta )  +  (\nabla \cdot \br_{m}, \eta ) dt & = \int_{0}^{T} (f, \eta) dt,  \\
\int_{0}^{T} - (\nabla \cdot \bw, c_{n}) + (\bD^{-1} \br_{m}, \bw) dt &= 0.
\end{array}
\end{equation}
Because of the weak convergence in (\ref{weakconv}), we also have
\begin{equation} \label{atlimit}
\left . \begin{array}{rl}
\int_{0}^{T} (\omega \partial_{t} c, \eta )  +  (\nabla \cdot \br, \eta ) dt &= \int_{0}^{T} (f, \eta) dt, \\
\int_{0}^{T}  - (\nabla \cdot \bw, c) +(\bD^{-1} \br, \bw) dt &= 0.
\end{array}\right .
\end{equation}
Since the spaces of test functions $ \eta, \bw $ are dense in $ L^{2}(0,T; M) $ and $L^{2}(0,T; \Sigma) $ respectively, it follows from (\ref{atlimit}) that (\ref{variational-mixed}) holds for a.e. $ t \in (0,T) $ (see \cite{evans1998partial}). \vspace{5pt} \\
$\bullet$ There remains to show that $ c(0) = c_{0} $. Toward this end, we take $ \eta \in C^{1}([0,T]; M_{n_{0}}) $ with $ \eta (T) = 0 $. It follows from the first equation of (\ref{atlimit}) that
\begin{equation} \label{c01}
-\int_{0}^{T} (\omega\partial_{t} \eta, c) + (\nabla \cdot \br , \eta) dt = \int_{0}^{T} (f, \eta) dt + (\omega c(0), \eta(0)).
\end{equation}
Similarly, from the first equation of (\ref{1stn}) we deduce
\begin{equation*}
-\int_{0}^{T} (\omega\partial_{t} \eta, c_{n}) + (\nabla \cdot \br_{m} , \eta) dt = \int_{0}^{T} (f, \eta) dt + (\omega c_{n}(0), \eta(0)).
\end{equation*}
Using (\ref{weakconv}), we obtain
\begin{equation}\label{c02}
-\int_{0}^{T} (\omega\partial_{t} \eta, c) + (\nabla \cdot \br , \eta) dt = \int_{0}^{T} (f, \eta) dt + (\omega c_{0}, \eta(0)),
\end{equation}
since $ c_{n}(0) \rightarrow c_{0} $ in $ L^{2}(\Omega) $. As $ \eta(0) $ is arbitrary, by comparing (\ref{c01}) and (\ref{c02}) we conclude that $ c(0) = c_{0} $. 
\\
 $\bullet$ For the uniqueness, as the equations are linear, it suffices to check that $ c =0 $ and $\br = 0 $ for $ f =0 $ and $ c_{0} = 0  $. To prove this, we set $ \mu = c $ and $ \bv = \br $ in (\ref{variational-mixed}) (for $ f = 0 $) and add the two resulting equations:
\begin{equation*}
\frac{1}{2}\frac{d}{dt} (\omega c, c) + (\bD^{-1} \br, \br) = 0.
\end{equation*}
Using the property of $ \omega $ and the fact that $ (\bD^{-1}  \br, \br) \geq \delta_{-} \| \br \|^{2} \geq 0 $, then integrating with respect to $ t $ we see that
\begin{equation*} 
\omega_{-} \| c(t) \|^{2} + 2 \delta_{-} \int_{0}^{t} \| \br (s) \|_{\mathbf{L^{2}(\Omega)}}^{2} ds \leq 0, \quad \text{for a.e. } t \in (0,T), 
\end{equation*}
where $ c(0) = c_{0}= 0 $. Thus $ c = 0$ and $\br = 0 $ for a.e. $ t \in (0,T) $. 
\qquad
\end{proof}
%
%
%
%

We now prove the second part of Theorem~\ref{thrm1}.
The higher regularity of the solution to (\ref{mixed}) is obtained by using the following lemma.
\begin{lemma} \label{lem:estimateDirichlet2}
\emph{(Estimates for improved regularity)} Assume that $\bD$ is in $\pmb{W^{1,\infty}(\Omega)}$,
$ c_{0}$ in $H^{2}(\Omega)~\cap~H^{1}_{0}(\Omega) $
and $ f $in $ H^{1}(0,T; L^{2}(\Omega)) $ then 
\begin{align*}
\| \partial_{t} c\|_{L^{\infty} (0,T; L^{2}(\Omega))} + \| \br \| _{L^{\infty} (0, T; H (\emph{\Div}, \Omega))} + \| \partial_{t} \br \|_{L^{2}(0,T; \mathbf{L^{2}(\Omega)})} \\
\leq C (\| f\|_{H^{1}(0,T; L^{2}(\Omega))} + \| c_{0}\|_{H^{2}(\Omega)}).
\end{align*}
\end{lemma}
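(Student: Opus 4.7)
The plan is to mirror the Galerkin strategy used for the first part of Theorem~\ref{thrm1}, but now to exploit the extra regularity of the data by differentiating the approximate system~(\ref{appro-mixed}) in time. Since $f\in H^{1}(0,T;L^{2}(\Omega))$ and since, by Lemma~\ref{lem:approx}, the coefficient vectors $\bC_{n},\bR_{m}$ satisfy a linear ODE system with a now Lipschitz-in-time right-hand side, the derivatives $\partial_{t}c_{n}$ and $\partial_{t}\br_{m}$ are well defined and can be used as test functions in the time-differentiated equations. I will first derive estimates on $\partial_{t}c_{n}$ and $\partial_{t}\br_{m}$ uniform in $n,m$, then recover the bound on $\nabla\cdot\br_{m}$ algebraically from the first equation of~(\ref{appro-mixed}), and finally pass to the weak limit exactly as in the proof of the first part of Theorem~\ref{thrm1}.

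For the energy step, I differentiate the first equation of~(\ref{appro-mixed}) in time and test with $\partial_{t}c_{n}$, then differentiate the second equation in time and test with $\partial_{t}\br_{m}$; the cross term $(\nabla\cdot\partial_{t}\br_{m},\partial_{t}c_{n})$ cancels on summation, leaving
\[
\tfrac{1}{2}\tfrac{d}{dt}(\omega\partial_{t}c_{n},\partial_{t}c_{n}) + (\bD^{-1}\partial_{t}\br_{m},\partial_{t}\br_{m}) = (\partial_{t}f,\partial_{t}c_{n}).
\]
The bounds on $\omega$ and $\bD^{-1}$ together with Cauchy--Schwarz, Young's inequality, and Gronwall's lemma then yield
\[
\|\partial_{t}c_{n}\|_{L^{\infty}(0,T;L^{2}(\Omega))}^{2} + \|\partial_{t}\br_{m}\|_{L^{2}(0,T;\pmb{L^{2}(\Omega)})}^{2} \le C\bigl(\|\partial_{t}c_{n}(0)\|^{2} + \|\partial_{t}f\|_{L^{2}(0,T;L^{2}(\Omega))}^{2}\bigr).
\]

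The main obstacle is to control $\|\partial_{t}c_{n}(0)\|$ uniformly in $n,m$. Testing the first equation of~(\ref{appro-mixed}) at $t=0$ against $\partial_{t}c_{n}(0)\in M_{n}$ gives
\[
\omega_{-}\|\partial_{t}c_{n}(0)\| \le \|f(0)\| + \|\nabla\cdot\br_{m}(0)\|,
\]
so everything reduces to an estimate on $\|\nabla\cdot\br_{m}(0)\|$. The second equation of~(\ref{appro-mixed}) at $t=0$, combined with Green's formula (valid because $c_{0}\in H^{1}_{0}(\Omega)$), characterizes $\br_{m}(0)$ as the $\bD^{-1}$-orthogonal projection onto $\Sigma_{m}$ of $-\bD\nabla c_{0}$. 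Under the additional hypotheses $c_{0}\in H^{2}(\Omega)\cap H^{1}_{0}(\Omega)$ and $\bD\in\pmb{W^{1,\infty}(\Omega)}$ the field $-\bD\nabla c_{0}$ lies in $H(\Div,\Omega)$ with $\|{-}\bD\nabla c_{0}\|_{H(\Div,\Omega)}\le C\|c_{0}\|_{H^{2}(\Omega)}$, and choosing the Hilbert bases $\{\bv_{i}\}$, $\{\mu_{i}\}$ compatibly, so that $\nabla\cdot\Sigma_{m}\subset M_{n(m)}$, transfers this to the uniform bound $\|\nabla\cdot\br_{m}(0)\|\le C\|c_{0}\|_{H^{2}(\Omega)}$. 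Since $\|f(0)\|\le C\|f\|_{H^{1}(0,T;L^{2}(\Omega))}$ by the continuous embedding $H^{1}(0,T;L^{2}(\Omega))\hookrightarrow C([0,T];L^{2}(\Omega))$, the required bound on $\|\partial_{t}c_{n}(0)\|$ follows.

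To conclude, the $L^{\infty}(0,T;H(\Div,\Omega))$-bound on $\br_{m}$ is read off the first equation in the form $\nabla\cdot\br_{m}=f-\omega\partial_{t}c_{n}$, whose right-hand side has just been controlled in $L^{\infty}(0,T;L^{2}(\Omega))$; combined with the $\pmb{L^{2}}$-bound on $\br_{m}$ from Lemma~\ref{lem:estimates1}, this yields the stated estimate. Passing to the weak limit as $n,m\to\infty$ along subsequences, exactly as at the end of the proof of the first part of Theorem~\ref{thrm1}, transfers all Galerkin bounds to $(c,\br)$ and finishes the proof.
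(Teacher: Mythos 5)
Your proposal follows essentially the same route as the paper: differentiate the Galerkin system~(\ref{appro-mixed}) in time, test with $\partial_{t}c_{n}$ and $\partial_{t}\br_{m}$ to cancel the cross term, reduce the initial term to $\|\nabla\cdot\br_{m}(0)\|$ via the first equation at $t=0$, identify $\br_{m}(0)$ from the second equation at $t=0$ using $c_{0}\in H^{2}(\Omega)\cap H^{1}_{0}(\Omega)$ and $\bD\in\pmb{W^{1,\infty}(\Omega)}$, recover $\nabla\cdot\br_{m}$ from the first equation, and pass to the weak limit. The only cosmetic difference is at $t=0$, where you phrase $\br_{m}(0)$ as a weighted $L^{2}$ projection of $-\bD\nabla c_{0}$ and invoke a compatibility of the Galerkin bases, while the paper lets $n\to\infty$ and uses a density argument to write $\bD^{-1}\br_{m}(0)=-\nabla c_{0}$; both resolve this delicate point at the same level of rigor.
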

\begin{proof}
As $ f \in H^{1}(0,T; L^{2}(\Omega)) $, the solutions of the ODE system (\ref{appro-matrix}) are more regular in time than before (i.e. up to second-order time derivatives). 
  
  Let $ n, m \geq 1 $. First, we differentiate the first equation of (\ref{appro-mixed}) with respect to $ t $
\begin{equation*} \label{1s}
(\omega \partial_{tt} c_{n}, \mu_{i}) + (\nabla \cdot \partial_{t} \br_{m}, \mu_{i}) = (\partial_{t} f, \mu_{i}), \quad \forall i=1, \hdots, n,
\end{equation*}
then we take $ \partial_{t} c_{n} $ as the test function
\begin{equation} \label{2s}
(\omega \partial_{tt} c_{n},\partial_{t} c_{n}) + (\nabla \cdot \partial_{t} \br_{m}, \partial_{t} c_{n}) = (\partial_{t} f, \partial_{t} c_{n}).
\end{equation}
Similarly, we differentiate the second equation of (\ref{appro-mixed}) with respect to $ t $
\begin{equation*} 
(\bD^{-1} \partial_{t} \br_{m}, \bv_{i}) - (\nabla \cdot v_{i}, \partial_{t} c_{n}) = 0, \quad \forall i=1, \hdots, m,
\end{equation*}
and take $ \partial_{t} \br_{m} $ as the test function
\begin{equation} \label{4s}
(\bD^{-1} \partial_{t} \br_{m},\partial_{t} \br_{m}) - (\nabla \cdot \partial_{t} \br_{m}, \partial_{t} c_{n}) = 0.
\end{equation}
Adding (\ref{2s}) and (\ref{4s}), we find
\begin{equation*}
(\omega \partial_{tt} c_{n},\partial_{t} c_{n}) +(\bD^{-1} \partial_{t} \br_{m},\partial_{t} \br_{m})=(\partial_{t} f, \partial_{t} c_{n}).
\end{equation*}
Bounding $ (\bD^{-1} \partial_{t} \br_{m},\partial_{t} \br_{m}) \geq \delta_{-} \|\partial_{t} \br_{m}\|^{2} $, using the assumption about $ \omega $ and applying the Cauchy-Schwarz inequality, we obtain
$$ \frac{d}{dt} \| \partial_{t} c_{n} \|^{2} + \frac{2\delta_{-}}{\omega_{-}} \| \partial_{t} \br_{m} \| ^{2} \leq \frac{1}{\omega_{-}^{2}} \| \partial_{t} f \|^{2} + \| \partial_{t} c_{n} \|^{2}.
$$
For each $ t \in [0,T] $, we may integrate over $ (0,t) $ to obtain
\begin{eqnarray}\label{6s1}
\hspace{-0.4cm}\| \partial_{t} c_{n} (t) \|^{2} + \frac{2\delta_{-}}{\omega_{-}} \int_{0}^{t}\| \partial_{t} \br_{m} \| ^{2} ds
  \leq \| \partial_{t} c_{n}(0) \|^{2} + \frac{1}{\omega_{-}^{2}} \int_{0}^{t}\| \partial_{t} f \|^{2} ds
  + \int_{0}^{t}\| \partial_{t} c_{n} \|^{2}ds.  \hspace{5mm}
\end{eqnarray}
In order to bound $ \| \partial_{t} c_{n}(0) \| $, we use the first equation
of (\ref{appro-mixed}) (with $ \partial_{t} c_{n} $ as the test function, at~$t=0$) 
to obtain
$$
\|\partial_t c_n (0) \| \leq C (\|\nabla \cdot \br_m(0)\|+\|f(0)\|).
$$
Using the second equation of (\ref{appro-mixed}) at $t=0$, and then let $ n \rightarrow \infty $
to get
$$
(\bD^{-1} \br_{m} (0)+\nabla c_{0},\bv) = 0, \quad \forall \bv \in \Sigma_m.
$$
Thus, using density argument and  $c_0 \in H^1_0(\Omega) \cap H^2(\Omega)$, we obtain
$\bD^{-1} \br_{m} (0)=-\nabla c_{0} $ in $H^1(\Omega)$.
Then, we bound
\begin{equation}\label{eq:estim_dtcn0}
 \| \partial_{t} c_{n}(0) \|^{2} \leq C (\| c_{0}\|^{2}_{H^{2}(\Omega)} + \| f (0) \|^{2}).
\end{equation}
Replacing \eqref{eq:estim_dtcn0} in \eqref{6s1}, we obtain
\begin{align}\label{6s}
\| \partial_{t} c_{n} (t) \|^{2} + \frac{2\delta_{-}}{\omega_{-}} \int_{0}^{t} & \| \partial_{t} \br_{m} \| ^{2} ds \nonumber\\[-1mm]
  &\leq C (\| c_{0} \|^{2}_{H^{2}(\Omega)}+\| f \|^{2}_{H^{1}(0,T; L^{2}(\Omega))})
  + \int_{0}^{t}\| \partial_{t} c_{n} \|^{2}ds. 
\end{align}
It now follows from (\ref{6s}) and Gronwall's lemma that
\begin{eqnarray} \label{estimate-ctinfty}
\hspace{-6mm}
\| \partial_{t} c_{n}\|^{2}_{L^{\infty}(0,T; L^{2}(\Omega))} + \frac{2\delta_{-}}{\omega_{-}} \| \partial_{t} \br_{m} \|^{2}_{L^{2}(0,T; \mathbf{L^{2}(\Omega)})}
\leq  C (\| c_{0} \|^{2}_{H^{2}(\Omega)}+\| f \|^{2}_{H^{1}(0,T; L^{2}(\Omega))}).\hspace{3mm}
\end{eqnarray}
Recalling (\ref{2iii}) and using (\ref{estimate-ctinfty}), we obtain
\begin{equation*}
(\nabla \cdot \br_{m} , \sum_{i=1}^{n} \xi_{m}^{i} \mu_{i}) \leq C (\| c_{0} \|^{2}_{H^{2}(\Omega)}+\| f \|^{2}_{H^{1}(0,T; L^{2}(\Omega))})+ \frac{1}{2} \| \sum_{i=1}^{n} \xi_{m}^{i} \mu_{i} \|^{2}.
\end{equation*}
Then, let $ n \rightarrow \infty $, we see that
\begin{equation*} 
\| \nabla \cdot \br_{m} \| ^{2}_{L^\infty(0,T;L^2(\Omega))}
  \leq C (\| c_{0} \|^{2}_{H^{2}(\Omega)}+\| f \|^{2}_{H^{1}(0,T; L^{2}(\Omega))}).
\end{equation*}
This along with (\ref{estimate-ct}) gives
\begin{equation} \label{estimate-rinfty-D}
\| \br_{m} \|_{L^{\infty}(0,T; H(\Div, \Omega))} ^{2} \leq C (\| c_{0} \|^{2}_{H^{2}(\Omega)}+\| f \|^{2}_{H^{1}(0,T; L^{2}(\Omega))}).
\end{equation}
The lemma now follows from (\ref{estimate-ctinfty}), (\ref{estimate-rinfty-D}) and (\ref{weakconv}).
\qquad
\end{proof}
\noindent\\[-2mm]

\indent
In the sequel, we will consider two domain decomposition methods for solving
\JR{\eqref{variational-mixed},~\eqref{ic}}.
The first one involves local Dirichlet subproblems whose well-posedness is an extension of Theorem \ref{thrm1}.
In the second approach, the optimized Schwarz waveform relaxation method, we shall impose Robin transmission
conditions on the interfaces.  Thus, we extend the well-posedness results above to the case of
Robin boundary conditions.
%
%
%
%
\section{\JR{A local} problem with Robin boundary conditions}\label{sec:Robinbc}
In this \JR{section}, we consider problem \eqref{primal}-\eqref{ic} with Robin boundary conditions on 
$\partial \Omega \times (0,T)$ :
\begin{equation} \label{Robinbc}
- \br \cdot \bn + \alpha c = g, \quad \text{on} \; \partial \Omega \times (0,T),
\end{equation}
where
\JR{$\alpha $ defined on $\partial \Omega$ is a time independent positive, bounded coefficient and
$g$ is a space-time function. We}
define $\check{\alpha}:= \frac{1}{\alpha}$
and suppose that $ 0 <\kappa_{1} \leq \check{\alpha} \leq \kappa_{2} $ a.e. in
$ \partial \Omega $. We denote by $ (\cdot, \cdot) _{\partial \Omega} $ and $ \| \cdot \|_{\partial \Omega}$
the inner product and norm in $ L^{2}(\partial \Omega) $ respectively. To derive a variational formulation
corresponding to boundary condition~\eqref{Robinbc}, we introduce the following Hilbert space
$$
\widetilde{\Sigma} = \iH (\Div, \Omega)
  := \{ \bv \in H(\Div, \Omega) \vert \; \bv \cdot \bn \in L^{2}(\partial \Omega )\},
$$
equipped with the norm 
$$
\| \bv \|^{2}_{\iH(\Div,\Omega)}
  := \| \bv \|_{H(\Div, \Omega)}+ \| \bv \cdot \bn \|^{2}_{\partial \Omega}. 
$$
The weak problem with Robin boundary conditions may now be written as follows:
\begin{eqnarray} \label{Robinvar}
\mbox{For a.e. $ t \in (0,T) $, find $ c(t) \in M$ and $ \br (t) \in \widetilde{\Sigma}$ such that}
   \hspace{4cm}\nonumber\\  
\begin{array}{rll}
(\omega \partial_t c, \mu ) + (\nabla \cdot \br , \mu )
  & = (f, \mu ), & \forall \mu \in M, \\
- (\nabla \cdot \bv, c ) + (\bD^{-1} \br, \bv ) + ( \check{\alpha}  \br \cdot \bn, \bv \cdot \bn )_{\partial \Omega}
  & =-(\check{\alpha} g, \bv \cdot \bn )_{\partial \Omega},  & \forall \bv \in \widetilde{\Sigma}.
\end{array}\quad \quad \quad
\end{eqnarray}
\begin{theorem} \label{thrm2}
If $ f $ is in $ L^{2} (0,T; L^{2}(\Omega)) $,
$ g $ in $H^{1}(0,T; L^{2}(\partial \Omega)) $
and $ c_{0} $ in $H^{1} (\Omega) $, then \JR{problem~\eqref{Robinvar},~\eqref{ic}} has a unique solution
$$
(c, \br) \in  \, H^{1}(0,T; L^{2}(\Omega)) \times \left (L^{2}(0,T; \iH(\emph{\Div}, \Omega))
\cap L^{\infty} (0,T; \pmb{L^{2}(\Omega)})\right ).
$$
Moreover, if $\bD$ is in $\pmb{W^{1,\infty}(\Omega)}$, $ f $ in $H^{1}(0,T; L^{2}(\Omega)) $
and $ c_{0}$ in $ H^{2}(\Omega) $ then
$$
(c, \br) \in  \, W^{1, \infty} (0,T; L^{2}(\Omega)) \times \left (L^{\infty} (0, T; \iH(\emph{\Div}, \Omega))
\cap H^{1}(0,T; \pmb{L^{2}(\Omega)})\right ).
$$
\end{theorem}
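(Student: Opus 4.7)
The strategy is to parallel the argument developed for Theorem~\ref{thrm1}, replacing the Hilbert basis of $\Sigma = H(\Div,\Omega)$ by a Hilbert basis $\{\bv_n\}$ of $\tSigma = \iH(\Div,\Omega)$ (this space being separable). The finite-dimensional Galerkin system associated with \eqref{Robinvar} takes exactly the form \eqref{appro-matrix}, except that the matrix $\bA_m$ is replaced by
$$
(\widetilde{\bA}_m)_{ij} = (\bD^{-1}\bv_j,\bv_i) + (\check{\prm}\,\bv_j\!\cdot\!\bn,\bv_i\!\cdot\!\bn)_{\partial\Omega},
$$
which is still symmetric positive definite (since $\bD^{-1}$ and $\check{\prm}$ are both uniformly positive), and a known right-hand side $\bG_m(t)$ built from $-(\check{\prm}g,\bv_i\!\cdot\!\bn)_{\partial\Omega}$ is added to \eqref{2-matrix}. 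The existence of approximate solutions $(c_n,\br_m)$ then follows exactly as in Lemma~\ref{lem:approx}.

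The next step is to establish a priori bounds analogous to Lemma~\ref{lem:estimates1}. Testing \eqref{appro-mixed}-Robin with $(c_n,\br_m)$, the new boundary contribution $\|\sqrt{\check{\prm}}\,\br_m\!\cdot\!\bn\|^2_{\partial\Omega}$ appears with a favorable sign and is balanced against $(\check{\prm}g,\br_m\!\cdot\!\bn)_{\partial\Omega}$ via Cauchy--Schwarz and Young, which requires $g\in L^2(0,T;L^2(\partial\Omega))$. This yields bounds for $c_n$ in $L^\infty(0,T;L^2)$ and for $\br_m\!\cdot\!\bn$ in $L^2(0,T;L^2(\partial\Omega))$ in addition to those on $\br_m$. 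To control $\partial_t c_n$ and $\br_m$ in $L^\infty(0,T;\pmb{L^2})$, we differentiate the second equation of \eqref{Robinvar} in time, giving now the extra term $(\check{\prm}\partial_t g,\bv\!\cdot\!\bn)_{\partial\Omega}$, and test with $(\partial_t c_n,\br_m)$. This is where the hypothesis $g\in H^1(0,T;L^2(\partial\Omega))$ enters. The constant term produced by $\br_m(0)$ is handled by taking $\br_m(0)$ as test function in the second equation at $t=0$ and passing to the limit in $n$: since $c_0\in H^1(\Omega)$, Green's formula produces
$$
\delta_- \|\br_m(0)\|^2 + \kappa_1 \|\br_m(0)\!\cdot\!\bn\|^2_{\partial\Omega}
\leq \|\nabla c_0\|\,\|\br_m(0)\| + \|g(0)\|_{\partial\Omega}\,\kappa_2\|\br_m(0)\!\cdot\!\bn\|_{\partial\Omega},
$$
after another Young inequality. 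Note that one no longer needs $c_0\in H^1_0$, which is precisely the gain from having the Robin condition on $\partial\Omega$. Finally, the divergence is controlled exactly as in the last step of Lemma~\ref{lem:estimates1} using the first equation.

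The passage to the limit (weak compactness in $L^2(0,T;\tSigma)$ and $L^\infty(0,T;\pmb{L^2})$, extraction of subsequences, and identification of limits against test functions in $C^1([0,T];M_{n_0})$ and $C^1([0,T];\tSigma_{m_0})$) follows the scheme of Theorem~\ref{thrm1}. The only delicate point is justifying the passage to the limit in the boundary integral $(\check{\prm}\br_m\!\cdot\!\bn,\bv\!\cdot\!\bn)_{\partial\Omega}$: this is legitimate because $\br_m\rightharpoonup \br$ in $L^2(0,T;\tSigma)$ in particular yields weak convergence of the normal traces in $L^2(0,T;L^2(\partial\Omega))$ by construction of the norm on $\tSigma$. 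The recovery of the initial condition $c(0)=c_0$ is identical to the Dirichlet case. Uniqueness is obtained by setting $f=0$, $g=0$, $c_0=0$, testing with $(c,\br)$, and using the nonnegativity of both volume and boundary bilinear contributions.

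The improved regularity part mirrors Lemma~\ref{lem:estimateDirichlet2}: one differentiates \eqref{appro-mixed} in time, tests with $(\partial_t c_n,\partial_t \br_m)$, and applies Gronwall. The new boundary term $\|\sqrt{\check{\prm}}\,\partial_t\br_m\!\cdot\!\bn\|^2_{\partial\Omega}$ again has a favorable sign. The bound on $\partial_t c_n(0)$ is obtained, as in the proof of Lemma~\ref{lem:estimateDirichlet2}, from the first equation at $t=0$ together with the identification $\bD^{-1}\br_m(0) = -\nabla c_0$ in the interior and $\br_m(0)\!\cdot\!\bn = g(0) - \prm c_0$ on $\partial\Omega$ (using $c_0\in H^2$, so the trace of $c_0$ is well-defined in $H^{3/2}(\partial\Omega)$ and hence in $L^2$). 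The divergence estimate in $L^\infty$ then follows from the first equation and the bound on $\partial_t c_n$ in $L^\infty(0,T;L^2)$, and the normal-trace contribution to the $\iH(\Div,\Omega)$ norm is controlled uniformly because $\br_m\!\cdot\!\bn = g - \prm c_n\vert_{\partial\Omega}$ does not actually appear here in closed form, but rather is bounded through the integrated energy identity after testing with $\br_m$.

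The main technical obstacle is the handling of $\br_m\!\cdot\!\bn$: unlike in the Dirichlet problem, one must systematically keep track of the boundary integral and verify that the extra regularity on $g$ (here $H^1$ in time) suffices at each differentiation step. Once this accounting is performed carefully, all estimates, compactness, and passage-to-the-limit arguments go through as in the proof of Theorem~\ref{thrm1}.
\qquad
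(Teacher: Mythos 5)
Your plan follows essentially the same route as the paper's proof: a Galerkin approximation in $M_n\times\widetilde{\Sigma}_m$ with the boundary-augmented symmetric positive definite matrix, the same sequence of energy estimates (testing with $(c_n,\br_m)$, then with $(\partial_t c_n,\br_m)$ after differentiating the second equation in time, bounding $\br_m(0)$ via the second equation at $t=0$ plus Green's formula with only $c_0\in H^1(\Omega)$, and bounding $\partial_t c_n(0)$ via the identification $\bD^{-1}\br_m(0)=-\nabla c_0$), followed by the same weak-compactness limit passage. The only slip is that your displayed inequality for $\br_m(0)$ omits the boundary term $(c_0,\br_m(0)\cdot\bn)_{\partial\Omega}$ produced by Green's formula, but this is harmless since it is absorbed exactly as the $g(0)$ term using the trace of $c_0\in H^1(\Omega)$, which is what the paper does.
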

\vspace{-3mm}
\begin{proof}
The proof of Theorem \ref{thrm2} relies on energy estimates and Gronwall's lemma, together
with a Galerkin method, as for the proof of Theorem~\ref{thrm1}. We only present here parts of the proof that
  are different from those of the proof of Theorem~\ref{thrm1}.
We construct the finite-dimensional approximation problems to (\ref{Robinvar}) as follows
\begin{eqnarray} \label{Robin-appro}
\hspace{-5mm}    
  \begin{array}{rll} (\omega \partial_{t} c_{n}, \mu_{i} ) + (\nabla \cdot \br_{m} , \mu_{i} )
    \hspace{-2mm}& = (f, \mu_{i} ),&
 \hspace{-3mm} 1\le i \le n,\\
- (\nabla \cdot \tilde{\bv}_{j}, c_{n} )+ (\bD^{-1} \br_{m}, \tilde{\bv}_{j} )  + (\check{\alpha}  \br_{m} \cdot \bn, \tilde{\bv}_{j} \cdot \bn )_{\partial \Omega}
\hspace{-2mm} & =  (-\check{\alpha} g, \tilde{\bv}_{j} \cdot \bn  )_{\partial \Omega},  &
\hspace{-3mm}  1\le j \le m,\hspace{3mm}
\end{array}
\end{eqnarray}
where $ c_{n} \in M_{n} $, $ \br_{m} \in \widetilde{\Sigma}_{m} $ and $ \tilde{\bv}_{i}, i=1, \hdots, m $
is the basis of $ \widetilde{\Sigma}_{m} $. We then rewrite (\ref{Robin-appro}) in matrix form
as in~\eqref{appro-matrix}:
\begin{eqnarray} 
\bW_{n} \frac{d \bC_{n}}{dt} (t) + \tilde{\bB}_{nm} \tilde{\bR}_{m}(t) & = \bF_{n}(t), \nonumber \\
-\tilde{\bB}_{nm}^{T} \bC_{n} (t) +\tilde{\bA}_{m} \tilde{\bR}_{m}(t) & = \bG_{m} (t), \nonumber 
\end{eqnarray}
where $ \tilde{\bR}_{m} $ is the vector of degrees of freedom of $ \br_{m} $ with respect
to the basis~$ \{ \tilde{\bv}_{i} \}_{i=1}^{m} $;
$$
(\tilde{\bA}_{m})_{ij} = (\bD^{-1} \tilde{\bv}_{j}, \tilde{\bv}_{i})
  + (\check{\alpha} \tilde{\bv}_{j} \cdot \bn, \tilde{\bv}_{i} \cdot \bn )_{\partial \Omega},
     \qquad \forall 1 \leq i,j \leq m,
$$
is symmetric and positive-definite,
$$
(\tilde{\bB}_{nm})_{ij} = (\nabla \cdot \tilde{\bv}_{j}, \mu_{i}) \; \text{and} \;
(\bG_{m} (t))_{i} = (-\check{\alpha} g(t), \tilde{\bv}_{i} \cdot \bn )_{\partial \Omega},
   \quad \forall \, 1 \leq i \leq n, 1 \leq j \leq m.  
$$
Thus, there exists a unique solution $ (c_{n}, \br_{m}) $ to (\ref{Robin-appro}).

Now to prove the existence of a solution to (\ref{Robinvar}), we derive suitable energy estimates
in the same manner as in Section~\ref{Sec:Model} but with an extra term $ \br \cdot \bn $ on the boundary. 
\begin{lemma}\label{lemma:energyestimRobin} 
Let $ f \in L^{2} (0,T; L^{2}(\Omega)) $ , $ g \in H^{1}(0,T; L^{2}(\partial \Omega )$
and $ c_{0} \in H^{1} (\Omega) $. \vspace{5pt}\\
The following estimates hold 
\begin{eqnarray*}
 \begin{array}{c}
 \hspace{-1.3cm} (i) \; \| c\|_{L^{\infty} (0,T; L^{2}(\Omega))}
  + \| \br \|_{L^{2}(0,T; \pmb{L^{2}(\Omega)})} +  \| \br \cdot \bn \|_{L^{2}(0,T; L^{2}(\partial \Omega))} \\[1mm]
 \hspace{2cm}\leq C ( \| c_{0}\|_{L^{2}(\Omega)} + \| f\| _{L^{2}(0,T; L^{2}(\Omega))}
  + \| g\|_{L^{2}(0,T; L^{2}(\partial \Omega))}),\\[3mm]
 \hspace{-1cm} (ii) \; \| \partial_{t} c \|_{L^{2}(0,T; L^{2}(\Omega))} + \| \br \| _{L^{\infty} (0,T; \pmb{L^{2}(\Omega)})}
  +  \| \br \cdot \bn \|_{L^{\infty} (0,T; L^{2}(\partial \Omega))} \\[1mm]
   \hspace{2cm}\leq C ( \| c_{0}\| _{H^{1}(\Omega)} + \| f\| _{L^{2}(0,T; L^{2}(\Omega))}
  + \| g\|_{H^{1}(0,T; L^{2}(\partial \Omega))}),\\[3mm]
(iii) \; \| \br \|_{L^{2}(0,T; \iH(\text{div},\Omega))} 
 \leq C ( \| c_{0}\|_{H^1(\Omega)} + \| f\| _{L^{2}(0,T; L^{2}(\Omega))}
  + \| g\|_{H^1(0,T; L^2(\partial \Omega))}).\hspace{-1.1cm}
\end{array}
\end{eqnarray*}
\end{lemma}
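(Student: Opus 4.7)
The plan is to derive the three estimates on the Galerkin approximations $(c_n,\br_m)$ defined by \eqref{Robin-appro} and then pass to the limit via weak lower semicontinuity, in the same spirit as the Dirichlet case (Lemmas~\ref{lem:estimates1} and~\ref{lem:estimateDirichlet2}). The new ingredient throughout is the Robin boundary term, which on the left-hand side has the good sign and absorbs into the estimates using $\check\alpha \geq \kappa_1 > 0$, while on the right-hand side it is a forcing $(\check\alpha g,\br_m\!\cdot\!\bn)_{\partial\Omega}$ that we control by Young's inequality at the cost of the boundary norm of $g$.

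For \textbf{(i)}, I test the first equation of~\eqref{Robin-appro} with $c_n$ and the second with $\br_m$, add them (the divergence terms cancel), and use
$\tfrac{1}{2}\tfrac{d}{dt}(\omega c_n,c_n)+(\bD^{-1}\br_m,\br_m)+(\check\alpha\br_m\!\cdot\!\bn,\br_m\!\cdot\!\bn)_{\partial\Omega}=(f,c_n)-(\check\alpha g,\br_m\!\cdot\!\bn)_{\partial\Omega}.$
Bounding the left-hand side from below by $\tfrac{\omega_-}{2}\tfrac{d}{dt}\|c_n\|^2+\delta_-\|\br_m\|^2+\kappa_1\|\br_m\!\cdot\!\bn\|_{\partial\Omega}^2$ and applying Young's inequality to the right-hand side to absorb $\tfrac{\kappa_1}{2}\|\br_m\!\cdot\!\bn\|_{\partial\Omega}^2$ and a small multiple of $\|c_n\|^2$, I integrate in $t$ and close with Gronwall's lemma, using $\|c_n(0)\|\le\|c_0\|$.

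For \textbf{(ii)}, I differentiate the second equation of~\eqref{Robin-appro} in time (legitimate because $\bD$, $\check\alpha$ are time-independent and the Galerkin solutions are smooth enough in $t$), test the first equation with $\partial_t c_n$ and the differentiated second equation with $\br_m$, and add. The divergence terms again cancel, and I obtain
$\omega_-\|\partial_t c_n\|^2+\tfrac{1}{2}\tfrac{d}{dt}\!\left[(\bD^{-1}\br_m,\br_m)+(\check\alpha\br_m\!\cdot\!\bn,\br_m\!\cdot\!\bn)_{\partial\Omega}\right]\le(f,\partial_t c_n)-(\check\alpha\,\partial_t g,\br_m\!\cdot\!\bn)_{\partial\Omega},$
whose right-hand side I handle by Cauchy--Schwarz and Young. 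The key preliminary step, as in Lemma~\ref{lem:estimateDirichlet2}, is to bound $\br_m(0)$: taking $\tilde\bv=\br_m(0)$ in the second equation at $t=0$ and applying Green's formula yields
$\delta_-\|\br_m(0)\|^2+\kappa_1\|\br_m(0)\!\cdot\!\bn\|_{\partial\Omega}^2\le(\br_m(0),-\nabla c_0)+(\br_m(0)\!\cdot\!\bn,c_0-\check\alpha g(0))_{\partial\Omega},$
which, by the trace inequality on $c_0\in H^1(\Omega)$, the bound $\|g(0)\|_{\partial\Omega}\le C\|g\|_{H^1(0,T;L^2(\partial\Omega))}$, and Young's inequality, gives $\|\br_m(0)\|+\|\br_m(0)\!\cdot\!\bn\|_{\partial\Omega}\le C(\|c_0\|_{H^1(\Omega)}+\|g\|_{H^1(0,T;L^2(\partial\Omega))})$. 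A final Gronwall argument then yields~(ii). Note that, in contrast to the Dirichlet estimate, only $c_0\in H^1(\Omega)$ is needed here because the extra Robin contribution already controls $\br_m(0)\cdot\bn$.

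For \textbf{(iii)}, it remains to bound $\|\nabla\!\cdot\!\br_m\|_{L^2(0,T;L^2(\Omega))}$. I use the projection trick of Lemma~\ref{lem:estimates1}: expand $\nabla\!\cdot\!\br_m(t)=\sum_i\xi_m^i(t)\mu_i$, multiply the first equation of~\eqref{Robin-appro} by $\xi_m^i$, sum over $i=1,\dots,n$, apply Cauchy--Schwarz and Young to obtain
$(\nabla\!\cdot\!\br_m,\textstyle\sum_{i=1}^n\xi_m^i\mu_i)\le\tfrac{1}{2}(\|f\|+\omega_+\|\partial_t c_n\|)^2+\tfrac{1}{2}\|\sum_{i=1}^n\xi_m^i\mu_i\|^2,$
integrate in time, let $n\to\infty$, and combine with the $L^2(0,T;L^2)$ bound on $\partial_t c_n$ from~(ii) and the $L^2(0,T;\pmb{L^2})$ and $L^2(0,T;L^2(\partial\Omega))$ bounds on $\br_m$ and $\br_m\!\cdot\!\bn$ from~(i). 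Weak lower semicontinuity then transfers all three estimates to the limit pair $(c,\br)$. The most delicate step will be the estimate on $\br_m(0)$ in~(ii), where one must arrange the three couplings (with $\nabla c_0$, with the trace of $c_0$, and with $g(0)$) so that the boundary contribution can be absorbed by $\kappa_1\|\br_m(0)\!\cdot\!\bn\|_{\partial\Omega}^2$ without requiring additional regularity on $c_0$.
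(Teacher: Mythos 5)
Your proposal is correct and follows essentially the same route as the paper's proof: Galerkin approximation, the same test-function choices for (i) and (ii) (including differentiating the second equation in time and bounding $\br_m(0)$ via the second equation at $t=0$ with Green's formula, absorbing the boundary coupling into $\kappa_1\|\br_m(0)\cdot\bn\|^2_{\partial\Omega}$), and the same spectral-expansion trick for $\nabla\cdot\br_m$ in (iii), followed by passage to the limit. No substantive differences to report.
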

\begin{lemma} \label{lem:estimates2}
\emph{(Estimates \JR{with greater} regularity)} Assume that $\bD$ is in $\pmb{W^{1,\infty}(\Omega)}$,
$ c_{0} $ in $ H^{2}(\Omega) $,
$ f $ in $H^{1}(0,T; L^{2}(\Omega)) $ \JR{and $ g $ in $ H^{1}(0,T; L^{2}(\partial\Omega)) $}, then 
\begin{align*}
\| \partial_{t} c\|_{L^{\infty} (0,T; L^{2}(\Omega))} + \| \br \| _{L^{\infty} (0, T; \iH (\emph{\Div}, \Omega))}
  + \| \partial_{t} \br \|_{L^{2}(0,T; \pmb{L^{2}(\Omega)})} \\
  \leq C (\| f\|_{H^{1}(0,T; L^{2}(\Omega))} + \| c_{0}\|_{H^{2}(\Omega)}
  + \| g\|_{H^{1}(0,T; L^{2}(\partial \Omega))}).
\end{align*}
\end{lemma}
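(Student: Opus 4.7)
The plan is to follow the approach of Lemma~\ref{lem:estimateDirichlet2}, working with the Galerkin approximations $(c_n,\br_m)$ defined in~\eqref{Robin-appro}, deriving estimates uniform in $n,m$, and then passing to the limit as in the proof of Theorem~\ref{thrm1}. The essential new ingredients compared to the Dirichlet case are the Robin boundary term on the left-hand side and the $g$-driven forcing on $\partial\Omega$.

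First, I would differentiate each equation of~\eqref{Robin-appro} with respect to $t$, then test the first with $\partial_t c_n\in M_n$ and the second with $\partial_t \br_m\in\widetilde\Sigma_m$. Adding the two cancels the cross term $(\nabla\cdot\partial_t\br_m,\partial_t c_n)$, so that
\begin{equation*}
(\omega\partial_{tt}c_n,\partial_t c_n)+(\bD^{-1}\partial_t\br_m,\partial_t\br_m)+(\check\alpha\,\partial_t\br_m\!\cdot\!\bn,\partial_t\br_m\!\cdot\!\bn)_{\partial\Omega}=(\partial_t f,\partial_t c_n)-(\check\alpha\,\partial_t g,\partial_t\br_m\!\cdot\!\bn)_{\partial\Omega}.
\end{equation*}
Using the coercivity assumptions on $\omega$, $\bD$, and $\check\alpha$, together with Cauchy--Schwarz and Young's inequality on the right-hand side (absorbing a small fraction of $\|\partial_t\br_m\!\cdot\!\bn\|^2_{\partial\Omega}$ into the left-hand side), this yields, after integration over $(0,t)$,
\begin{equation*}
\|\partial_t c_n(t)\|^2+C\!\int_0^t\!\!\left(\|\partial_t\br_m\|^2+\|\partial_t\br_m\!\cdot\!\bn\|_{\partial\Omega}^2\right)ds\le \|\partial_t c_n(0)\|^2+C\bigl(\|\partial_t f\|_{L^2L^2}^2+\|\partial_t g\|_{L^2L^2_{\partial\Omega}}^2\bigr)+\int_0^t\!\|\partial_t c_n\|^2 ds.
\end{equation*}
A Gronwall argument then gives the $L^\infty(0,T;L^2(\Omega))$ bound on $\partial_t c_n$ and the $L^2(0,T;\pmb{L^2(\Omega)})$ bound on $\partial_t\br_m$, provided $\|\partial_t c_n(0)\|$ is controlled.

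The main obstacle is the bound on $\|\partial_t c_n(0)\|$, which is where the Robin structure enters nontrivially. As in the Dirichlet case, testing the first equation of~\eqref{Robin-appro} at $t=0$ with $\partial_t c_n(0)$ gives $\|\partial_t c_n(0)\|\le C(\|\nabla\!\cdot\!\br_m(0)\|+\|f(0)\|)$. Using the second equation of~\eqref{Robin-appro} at $t=0$, letting $n\to\infty$, and integrating by parts (legitimate because $c_0\in H^1(\Omega)$), I would obtain
\begin{equation*}
(\bD^{-1}\br_m(0)+\nabla c_0,\bv)+(\check\alpha\,\br_m(0)\!\cdot\!\bn-c_0+\check\alpha g(0),\bv\!\cdot\!\bn)_{\partial\Omega}=0,\quad\forall\bv\in\widetilde\Sigma_m.
\end{equation*}
Passing $m\to\infty$ and invoking density of $\bigcup_m\widetilde\Sigma_m$ in $\widetilde\Sigma$ (first choosing $\bv$ compactly supported in $\Omega$, then letting $\bv\cdot\bn$ vary freely on $\partial\Omega$), the limit $\br(0)$ satisfies $\bD^{-1}\br(0)=-\nabla c_0$ in $\Omega$ together with the Robin trace identity $\br(0)\!\cdot\!\bn=\alpha c_0-g(0)$ on $\partial\Omega$. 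Since $\bD\in\pmb{W^{1,\infty}(\Omega)}$ and $c_0\in H^2(\Omega)$, we get $\br(0)=-\bD\nabla c_0\in\pmb{H^1(\Omega)}$, hence $\|\nabla\!\cdot\!\br(0)\|\le C\|c_0\|_{H^2}$; the same bound transfers to $\br_m(0)$ through the Galerkin projection, giving $\|\partial_t c_n(0)\|\le C(\|c_0\|_{H^2}+\|f(0)\|)$, and the embedding $H^1(0,T;L^2)\hookrightarrow C([0,T];L^2)$ controls $\|f(0)\|$ by $\|f\|_{H^1(0,T;L^2)}$.

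Finally, for the $L^\infty(0,T;\iH(\Div,\Omega))$ bound on $\br_m$, I would mimic the third step of the proof of Lemma~\ref{lem:estimates1}: expand $\nabla\!\cdot\!\br_m(t)=\sum_i\xi_m^i(t)\mu_i$ in the Hilbert basis of $M$, use the first equation of~\eqref{Robin-appro} to express $\xi_m^i(t)$ in terms of $(f(t),\mu_i)-(\omega\partial_t c_n(t),\mu_i)$, take the test function $\sum_{i=1}^n\xi_m^i(t)\mu_i$, apply Young's inequality, and let $n\to\infty$; combined with the previously obtained $L^\infty(0,T;L^2(\Omega))$ bound on $\partial_t c_n$, this yields the desired $L^\infty$-in-time estimate on $\|\nabla\!\cdot\!\br_m\|$. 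Together with the $L^\infty(0,T;\pmb{L^2(\Omega)})$ and $L^\infty(0,T;L^2(\partial\Omega))$ bounds on $\br_m$ and $\br_m\!\cdot\!\bn$ from Lemma~\ref{lemma:energyestimRobin}(ii), this gives the full $L^\infty(0,T;\iH(\Div,\Omega))$ estimate. Passing to the weak limit as in~\eqref{weakconv} then yields the claimed bound for $(c,\br)$.
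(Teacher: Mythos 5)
Your proposal follows essentially the same route as the paper's proof: differentiate the Galerkin system \eqref{Robin-appro} in time, test with $(\partial_t c_n,\partial_t\br_m)$, absorb the boundary terms via the coercivity of $\check\alpha$, apply Gronwall after controlling $\|\partial_t c_n(0)\|$ through the second equation at $t=0$ and $c_0\in H^2(\Omega)$, and recover the $L^\infty$-in-time bound on $\nabla\cdot\br_m$ by the basis-expansion argument of Lemma~\ref{lem:estimates1}. The only differences are cosmetic: you spell out the Robin trace identity for $\br(0)$ in more detail than the paper does, and you correctly invoke part (ii) of Lemma~\ref{lemma:energyestimRobin} (the $L^\infty$-in-time bounds) for the final $\iH(\Div,\Omega)$ estimate, where the paper's text cites part (i).
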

%
%
\begin{proof}\emph{(of Lemma \ref{lemma:energyestimRobin}).} 
In order to prove \emph{(i)}, as before, we take $ c_{n} $ and $ \br_{m} $ as \JR{test} functions
in~\eqref{Robin-appro} \JR{and add the two equations:}
$$
(\omega \partial_{t} c_{n}, c_{n}) + \left (\bD^{-1} \br_{m}, \br_{m}\right )
+ \left(\check{\alpha}  \br_{m} \cdot \bn, \br_{m} \cdot \bn \right)_{\partial \Omega}
  = \left (f, c_{n}\right ) + \left( -\check{\alpha} g, \br_{m} \cdot \bn \right)_{\partial \Omega}.
$$
\JR{The} assumptions concerning $ \omega $, $\pmb{D}$ and $ \check{\alpha} $ give
$$
(\omega \partial_{t} c_n, c_n) \geq \frac{\omega_{-}}{2} \frac{d}{dt} \| c_n\|^{2}, \; \;
(\bD^{-1} \br_m, \br_m ) \geq \delta_{-} \| \br_m \| ^{2},\; \;
\left (\check{\alpha}  \br_m \cdot \bn, \br_m \cdot \bn \right )_{\partial \Omega}
  \geq \kappa_{1} \| \br_m \cdot \bn \|^{2}_{\partial \Omega},
$$
\JR{and the} Cauchy-Schwarz inequality:
\begin{equation}\label{eq:CauchySchw_f}
\mid \left (f,c_n\right ) \mid  \leq \| f\| \| c_n\| \leq \frac{1}{2\omega_{-}} \| f\|^{2} + \frac{\omega_{-}}{2} \|c_n \|^{2}.
\end{equation}
Similarly, for each $ \epsilon >0 $,
\begin{equation}\label{eq:CauchySchw_g}
\mid -\left (\check{\alpha} g,\br_m \cdot \bn \right )_{\partial \Omega} \mid
\leq \kappa_{2} \| g\|_{\partial \Omega}  \; \| \br_m \cdot \bn \|_{\partial \Omega}
\leq \kappa_{2} \left (\frac{1}{2\epsilon} \| g\|_{\partial \Omega}^{2}
       + \frac{\epsilon}{2} \|\br_m \cdot \bn \|_{\partial \Omega}^{2} \right ).
\end{equation}
Choosing $ \epsilon =\frac{\kappa_{1} }{\kappa_{2} } $, we then obtain
\begin{equation*}
\frac{\omega_{-}}{2}\frac{d}{dt}\| c_n \|^{2} + \delta_{-} \| \br_m \|^{2}
    + \frac{\kappa _{1} }{2} \| \br_m \cdot \bn \|_{\partial \Omega}^{2}
 \leq \frac{1}{2\omega_{-}} \| f\|^{2} + \frac{\kappa_{2}^{2} }{2\kappa _{1}} \| g\|_{\partial \Omega}^{2}
    + \frac{\omega_{-}}{2} \| c_n\|^{2} .
\end{equation*}
Integrating this inequality over $ \left (0,t\right )$ for \JR{$ t \in (0,T]$},
and using $ \|c_{n}(0)\|^{2} \leq \| c_{0}\|^{2} $, we~get
\begin{multline*}
\| c_n\left (t\right ) \|^{2} + \frac{2 \delta_{-}}{\omega_{-}} \int_{0}^{t}\| \br_{m}\left (s\right ) \|^{2} \, ds
    + \frac{\kappa _{1}}{\omega_{-}} \int_{0}^{t} \| \br_m \left (s\right ) \cdot \bn \|_{\partial \Omega}^{2} \, ds \\
 \leq C \left (\|c_{0}\|^{2}+\| f\|^{2}_{L^{2}\left (0,T; L^{2}\left (\Omega\right )\right )}
    + \| g\|^{2}_{L^{2}\left (0,T; L^{2}\left (\partial \Omega \right )\right )}\right ) 
    + \int_{0}^{t} \| c_n\left (s\right )\|^{2} \, ds,
\end{multline*}
with $ C =\max(1, \frac{1}{\omega_{-}^{2}},\frac{\kappa_{2}^{2} }{\omega_{-}\kappa_{1}}) $.
\JR{Then an application of Gronwall's lemma completes the proof of $ (i) $}. \\

\JR{For \emph{(ii)}}, we follow the same steps as in~(\ref{1ii})-(\ref{4ii}):
taking $ \partial_{t} c_n \in L^{2}(0,T; M)  $ as the test function in the first equation
of~\eqref{Robin-appro}, we obtain
\begin{equation} \label{1ii-2}
 (\omega \partial_{t} c, \partial_{t} c) + (\nabla \cdot \br_m, \partial_{t} c ) = (f, \partial_{t} c). 
\end{equation}
Differentiating the second equation of~\eqref{Robin-appro} with respect to $ t $, we \JR{obtain}
\begin{align*} \label{2ii-2}
- (\nabla \cdot \bv, \partial_{t} c_n )+ (\bD^{-1} \partial_{t} \br_m, \bv )
  + (\check{\alpha}  \partial_{t} \br_m \cdot \bn, \bv \cdot \bn )_{\partial \Omega}
= - (\check{\alpha} \partial_{t} g, \bv \cdot \bn  )_{\partial \Omega}, \;
\forall \bv \in \widetilde{\Sigma}.
\end{align*}
Then we take $ \bv=\br_m $ in the previous equation
and add the resulting equation to~\eqref{1ii-2} to obtain
$$
(\omega \partial_{t} c_n, \partial_{t} c_n) +(\bD^{-1} \partial_{t} \br_m, \br_m )
+ \left(\check{\alpha}  \partial_{t}\br_m \cdot \bn, \br_m \cdot \bn \right)_{\partial \Omega}
= (f, \partial_{t} c_n)- \left( \check{\alpha} \partial_{t}g, \br_m \cdot \bn \right)_{\partial \Omega}.
$$
As $ \bD $ is symmetric and positive definite, by applying the Cauchy-Schwarz inequality to the right hand side
as well as using the property of $ \omega $, we obtain
\begin{equation*}
\omega_{-} \|  \partial_{t} c \|^{2} + \frac{1}{2} \frac{d}{dt}\|\sqrt{\bD^{-1}} \br_m \|^{2}
  +\frac{\kappa_{1}}{2} \frac{d}{dt} \| \br_m \cdot \bn \|_{\partial \Omega}^{2}
\leq \mid \left (f, \partial_{t} c\right ) \mid
      + \mid \left ( \check{\alpha} \partial_{t} g, \br_m \cdot \bn\right )_{\partial \Omega} \mid.
\end{equation*}
We then apply the Cauchy-Schwarz inequality for the right-hand side
(as in~\eqref{eq:CauchySchw_f},~\eqref{eq:CauchySchw_g}, replacing $c$ and $g$ by $\partial_t c$
and $\partial_t g$), and take $\epsilon = \frac{\kappa_{1}}{\kappa_{2} }$,
$ C=\max(\frac{1}{\omega_{-}},\frac{\kappa_{2}^{2} }{\kappa_{1}}) $ to obtain
\begin{equation*}
\omega_{-} \| \partial_{t} c_n \|^{2} + \frac{d}{dt}\| \sqrt{\bD^{-1}} \br_m \|^{2}
    + \kappa_{1} \frac{d}{dt} \| \br_m \cdot \bn \|_{\partial \Omega}^{2}
 \leq C \left (\| f\|^{2} + \| \partial_{t} g\|_{\partial \Omega}^{2}\right )
    + \kappa_{1} \| \br_m \cdot \bn \|_{\partial \Omega}^{2}.
\end{equation*}
Integrating over $ \left (0,t\right ) $ for $ t \in [0,T] $, we find 
\begin{multline} \label{iii}
\omega_{-} \int_{0}^{t} \| \partial_{t} c_n\left (s\right ) \|^{2} ds
   + \|\sqrt{\bD^{-1}} \br_m \left (t\right )\|^{2} + \kappa_{1} \| \br_m\left (t\right ) \cdot \bn\|_{\partial \Omega}^{2} \\
\leq   C (\| f\|_{L^{2}(0,T; L^{2}(\Omega))}^{2} + \| g\|^{2}_{H^{1}(0,T; L^{2}(\partial \Omega))})
   + \| \sqrt{\bD^{-1}}\br_m\left (0\right )\|^{2} + \kappa_{1} \| \br_m\left (0\right ) \cdot \bn \|_{\partial \Omega}^{2} \\
   + \kappa_{1} \int_{0}^{t} \| \br_m\left (s\right ) \cdot \bn \|^{2}_{\partial \Omega} ds.
\end{multline}
So \JR{there} only remains to bound the term
$ (\| \sqrt{\bD^{-1}}\br_m\left (0\right )\|^{2} + \kappa_{1} \| \br_m\left (0\right ) \cdot \bn \|_{\partial \Omega}^{2} ) $.
Toward this end, we use the second equation of~\eqref{Robin-appro}
with $\bv=\br_m$ and for $ t=0 $ to obtain:
\begin{align*}
\delta_{-} \| \br_m(0) \|^{2} + \kappa_{1} \| \br_m(0) \cdot \bn &\|^{2}
 \leq (\nabla \cdot \br_m(0), c_n(0)) + \left (-\check{\alpha} g(0), \br_m(0) \cdot \bn \right )_{\partial \Omega}.
\end{align*}
Let $ n \rightarrow \infty $, as $ c_{n}(0) \rightarrow c_{0} $ we have
\begin{align*}
\delta_{-} \| \br_m(0) \|^{2} + \kappa_{1} \| &\br_m(0) \cdot \bn \|^{2}
 \leq (\nabla \cdot \br_m(0), c_{0}) + \left (-\check{\alpha} g(0), \br_m(0) \cdot \bn \right )_{\partial \Omega} \\
& \leq (-\br_m (0), \nabla c_{0}) + \left (c_{0} -\check{\alpha} g(0), \br_m(0) \cdot \bn \right )_{\partial \Omega} \\
& \leq \frac{\delta_{-}}{2} \| \br_m(0) \|^{2} + \frac{1}{2 \delta_{-} } \| \nabla c_{0}\|^{2}
+ \frac{\kappa_{1}}{2} \| \br_m(0) \cdot \bn \|^{2}
+ \frac{\kappa_{2}}{2 \kappa_{1}} \|c_{0} -  g(0)\|_{\partial \Omega}^{2},
\end{align*}
or
\begin{equation*}
\delta_{-} \| \br_m(0) \|^{2} + \kappa_{1} \| \br_m(0) \cdot \bn \|^{2}
  \leq C \left (\| c_{0}\|_{H^{1}(\Omega)}^{2} + \| g\|^{2}_{H^{1}(0,T; L^{2}(\partial \Omega)}\right ).
\end{equation*}
This along with \eqref{iii} \JR{and Gronwall's lemma} yields $(ii)$.
We now estimate $\| \nabla \cdot \br_{m} \| ^{2}$ as in Section~\ref{Sec:Model}:
we derive~\eqref{2iii} from~\eqref{1iii} and the first equation of~\eqref{Robin-appro}
(after multiplying by $ \xi_{m}^{i} (t) $ and summing over~$i=1,\hdots,n$).
Then, using the bound for $\|\partial_t c\|_{L^2(0,T;L^2(\Omega))}$ in $(ii)$, we obtain
\begin{equation}\label{eq:estim-divrm}
  \| \nabla \cdot \br_{m} \| ^{2}_{L^2(0,T;L^2(\Omega))} \leq C (\| c_{0} \|^{2}_{H^{1}(\Omega)}+\| f \|^{2}_{L^2(0,T; L^{2}(\Omega))}
+\| g\|^{2}_{H^{1}(0,T; L^{2}(\partial \Omega))} ).
\end{equation}
This along with (i) gives
\begin{equation*} 
 \| \br_{m} \|_{L^{2}(0,T; \iH(\Div, \Omega))} ^{2}
   \leq C (\| c_{0} \|^{2}_{H^{1}(\Omega)}+\| f \|^{2}_{L^2(0,T; L^{2}(\Omega))} + \| g\|^{2}_{H^{1}(0,T; L^{2}(\partial \Omega))} ),
\end{equation*}
\JR{and the proof of Lemma~\ref{lemma:energyestimRobin} is completed}. \qquad
\end{proof}
%
%
%
%

We now prove Lemma \ref{lem:estimates2} for the higher regularity of the solution to~\eqref{mixed}.
\begin{proof}\emph{(of Lemma \ref{lem:estimates2})}.
Let $ n, m \geq 1 $.
\JR{ Differentiate both} equations of~\eqref{Robin-appro}
with respect to $ t $, \JR{take}
$ \mu=\partial_{t} c_n  $ and $ \bv=\partial_{t}\br_m  $
as the test functions,  \JR{and} add the two resulting equations to obtain
\begin{align*}
(\omega \partial_{tt} c_n,\partial_{t} c_n) +(\bD^{-1} \partial_{t} \br_m,\partial_{t} \br_m)
&+ \left(\check{\alpha}  \partial_{t} \br_m \cdot \bn, \partial_{t} \br_m \cdot \bn \right)_{\partial \Omega}\\
&= \left (\partial_{t} f, \partial_{t} c_n\right )
    - \left( \check{\alpha} \partial_{t} g, \partial_{t} \br_m \cdot \bn \right)_{\partial \Omega}.
\end{align*}
Then, the assumptions concerning $ \omega $, $\pmb{D}$, $ \check{\alpha} $,
and the Cauchy-Schwarz inequality give
\begin{equation*}
\frac{\omega_{-}}{2}\frac{d}{dt}\| \partial_{t} c_n \|^{2} + \delta_{-} \| \partial_{t} \br_m \|^{2}
    + \frac{\kappa _{1} }{2} \| \partial_{t} \br_m \cdot \bn \|_{\partial \Omega}^{2}
 \leq \frac{1}{2\omega_{-}} \| \partial_{t} f\|^{2} + \frac{\kappa_{2}^{2} }{2\kappa _{1}} \| \partial_{t} g\|_{\partial \Omega}^{2}
    + \frac{\omega_{-}}{2} \| \partial_{t} c_n\|^{2} .
\end{equation*}
Integrating this inequality over $ (0,t) $, for  for \JR{$ t \in (0,T] $}, we obtain 
\begin{eqnarray}\label{eq:estimateintt}
&\hspace{-3.2cm}\| \partial_{t} c_n (t) \|^{2} + \frac{2 \delta_{-}}{\omega_{-}} \int_{0}^{t}\| \partial_{t}\br_m (s) \|^{2} \, ds
    + \frac{\kappa _{1}}{\omega_{-}} \int_{0}^{t} \| \partial_{t} \br_m (s) \cdot \bn \|_{\partial \Omega}^{2} \, ds
 \nonumber\\
&\leq C (\|\partial_{t} c_n(0)\|^{2}+\| \partial_{t} f\|^{2}_{L^{2}(0,T; L^{2}(\Omega))}
    + \| \partial_{t} g\|^{2}_{L^{2}\left (0,T; L^{2}\left (\partial \Omega \right )\right )} ) 
    + \int_{0}^{t} \| \partial_{t} c_n\left (s\right )\|^{2} \, ds, \qquad
\end{eqnarray}
with $ C =\max(1, \frac{1}{\omega_{-}^{2}},\frac{\kappa_{2}^{2} }{\omega_{-}\kappa_{1}}) $.
\JR{To bound} $\|\partial_{t} c_n(0)\|$,
we use the first equation of~\eqref{Robin-appro}
\JR{at $t=0$ with $\mu=\partial_{t} c_n $}, and the Cauchy-Schwarz inequality to obtain
$$
\| \partial_{t} c_n (0) \|^2
  \le C (\| f(0) \|^{2} +\| \nabla \cdot \br_m (0) \|^2)
  \le  C (\| f(0) \|^{2} +\| c_0 \|^2_{H^2(\Omega)}).
$$
\JR{Here we have used the fact that
$\pmb{D}^{-1} \br_m(0)= -\nabla c_n(0)$ in $\mathcal{D}^{\prime}(\Omega)$
given by the second equation of~\eqref{Robin-appro}, and hence in $L^2(\Omega)$ since $c_0 \in H^2(\Omega)$.
}
From this inequality and \eqref{eq:estimateintt}, we have
\begin{multline}\label{estimate-ct2}
\| \partial_{t} c_n (t) \|^{2} + \frac{2 \delta_{-}}{\omega_{-}} \int_{0}^{t}\| \partial_{t}\br_m (s) \|^{2} \, ds
    + \frac{\kappa _{1}}{\omega_{-}} \int_{0}^{t} \| \partial_{t} \br_m (s) \cdot \bn \|_{\partial \Omega}^{2} \, ds\\
  \leq C (\| c_{0} \|^{2}_{H^{2}(\Omega)}+\| f \|^{2}_{H^{1}(0,T; L^{2}(\Omega))}+\| g \|^{2}_{H^{1}(0,T; L^{2}(\partial\Omega))})
 + \int_{0}^{t}\| \partial_{t} c_n \|^{2}ds. 
\end{multline}
It now follows from~\eqref{estimate-ct2} and Gronwall's lemma that
\begin{multline}\label{estimate-ctinfty2}
\| \partial_{t} c_n\|_{L^{\infty} (0,T; L^{2}(\Omega))}
  + \| \partial_{t} \br_m \|_{L^{2}(0,T; \pmb{L^{2}(\Omega)})}
  +  \| \partial_{t} \br_m \cdot \bn \|_{L^{2}(0,T; L^{2}(\partial \Omega))} \\
\leq C (\| c_{0} \|^{2}_{H^{2}(\Omega)}+\| f \|^{2}_{H^{1}(0,T; L^{2}(\Omega))}+\| g \|^{2}_{H^{1}(0,T; L^{2}(\partial\Omega))}).
\end{multline}
To obtain the estimate in the $\iH(\Div, \Omega)$-norm,
we follow the same steps as for~\eqref{eq:estim-divrm} to obtain
\begin{equation*} 
\| \nabla \cdot \br_m \| ^{2}_{L^\infty(0,T; L^{2}(\Omega))}
  \leq C (\| c_{0} \|^{2}_{H^{2}(\Omega)}+\| f \|^{2}_{H^{1}(0,T; L^{2}(\Omega))}+\| g \|^{2}_{H^{1}(0,T; L^{2}(\partial\Omega))}).
\end{equation*}
This along with the inequality $(i)$ of Lemma~\ref{lemma:energyestimRobin} gives
\begin{equation} \label{estimate-rinfty}
\| \br \|_{L^\infty(0,T; \iH(\Div, \Omega))} ^{2}
  \leq C (\| c_{0} \|^{2}_{H^{2}(\Omega)}+\| f \|^{2}_{H^{1}(0,T; L^{2}(\Omega))}+\| g \|^{2}_{H^{1}(0,T; L^{2}(\partial\Omega))}).
\end{equation}
The lemma now follows from~\eqref{estimate-ctinfty2} and~\eqref{estimate-rinfty}.
\qquad
\end{proof} 

Thanks to Lemma~\ref{lemma:energyestimRobin}, we can finish the proof of Theorem~\ref{thrm2} using similar
  arguments as for the proof of  Theorem~\ref{thrm1}.
\qquad
\end{proof} 
}
%
%
%
%
\section{Space-time domain decomposition methods}
\label{Sec:DD}
In this section, we present two nonoverlapping domain decomposition {methods for solving problem (\ref{mixed}).
For simplicity, we consider a decomposition of $ \Omega $ into two non overlapping subdomains
$ \Omega_{1} $ and $ \Omega_{2} $ separated by an interface $\Gamma$:
$$
\Omega_{1} \cap \Omega_{2} = \emptyset;\quad  \Gamma
= \partial \Omega_{1} \cap \partial \Omega_{2} \cap \Omega, \quad \Omega=  \Omega_{1} \cup \Omega_{2} \cup\Gamma.
$$
Also for the sake of simplicity we have assumed throughout this section and the next that the boundary condition
given on $\partial\Omega$ is a homogeneous Dirichlet condition.  However, 
the analysis given below can be generalized to the case of multiple subdomains
and more general boundary conditions (see Section \ref{Sec:Num}).} 

For $i=1,2$, let $ \pmb{n}_{i} $ denote the unit outward pointing vector field on $\partial\Omega_i$,
and for any scalar, vector or tensor valued function $\varphi$ defined on $\Omega$, let $\varphi_i$ denote
the restriction  of $ \varphi$ to $ \Omega_{i} $. Using this notation, problem (\ref{mixed})
can be reformulated as an equivalent multidomain problem consisting of the following space-time subdomain problems
\begin{equation} \label{multi-mixed}
\begin{array}{rll} \omega_{i}\partial_{t} c_{i} + \nabla \cdot \pmb{r}_{i}
        & =f & \text{in} \; \Omega_{i} \times (0,T),\\
\nabla c_{i} +\pmb{D}^{-1}_{i} \pmb{r}_{i}
        &=0  & \text{in} \; \Omega_{i} \times (0,T), \\
c_{i}    & = 0 & \text{on} \; \partial \Omega_{i} \cap \partial \Omega \times (0,T),\\
c_{i}(0) & = c_{0} & \text{in} \; \Omega_{i},
\end{array} \quad \text{for $ i=1,2 $},
\end{equation}
together with the transmission conditions on the space-time interface \vspace{-0.1cm}
\begin{equation} \label{physicsTC}
\begin{array}{l}
c_{1} = c_{2} \\
\pmb{r}_{1} \cdot \pmb{n}_{1} + \pmb{r}_{2} \cdot \pmb{n}_{2} =0
\end{array} \quad \text{on} \; \Gamma \times \left (0,T\right ), \vspace{-0.1cm}
\end{equation}
Alternatively, and equivalently, one may impose the transmission conditions
\begin{equation} \label{RobinTCs}
\left. \begin{array}{ll} -\br_{1} \cdot \bn_{1} + \prm_{1,2} c_{1} & = -\br_{2} \cdot \bn_{1} + \prm_{1,2} c_{2}\\
-\br_{2} \cdot \bn_{2} + \prm_{2,1} c_{2} & = -\br_{1} \cdot \bn_{2} + \prm_{2,1} c_{1}
\end{array} \right . \quad \text{ on } \Gamma \times \left (0,T\right ),
\end{equation}
where $\prm_{1,2}$ and $\prm_{2,1}$ are a pair of positive parameters.  The first method that we consider
is based on (\ref{multi-mixed}) together with the "natural" transmission conditions (\ref{physicsTC})
while the second method is based on (\ref{multi-mixed}) together with
the Robin transmission conditions~(\ref{RobinTCs}).  For the latter method the parameters $\prm_{i,j}$
may be optimized to improve the convergence rate of the iterative scheme
(see~\cite{Bennequin, OSWR1d1, OSWR1d2, OSWRwave}).  

For both methods the multidomain problem is formulated through the use of interface operators as a problem
posed on the space-time interface.  For the first method the interface operators
are time-dependent Steklov-Poincar\'e (Dirichlet-to-Neumann) operators  while for the second
they are Robin-to-Robin operators.  Associated with a Jacobi algorithm
this latter method is known as the Optimized Schwarz Waveform Relaxation (OSWR) method.
\CJ{Rewriting the OSWR 
method as a space-time interface problem solved by a more general (Krylov) method was done in~\cite{Haeberlein};
here we extend that work to a problem written in mixed form.}
%
%
%
%
%
\subsection{Method~1: Using the time-dependent Steklov-Poincar\'e operator}\label{subsec:M1}
To introduce the interface problem for this method we introduce several
operators, but first we define some notation:
$$
\Lambda =\All{H^{1}(0,T; H^{\frac{1}{2}}_{00}(\Gamma))},\quad \mbox{ and, for }i=1,2,\quad
M_i =L^2(\Omega_i)\quad \mbox{ and }\quad \Sigma_i = H({\Div}, \Omega_i).
$$
%
We also define $H_{*}^{1} (\Omega_i)=\{v\in H^{1} (\Omega_i), \; v=0 \text{ over } \partial \Omega_i \cap \partial \Omega \}, \, \text{for} \, i=1,2 $. \\
Next, let $\iD_{i}, i=1,2,$ be the solution operator that associates to the boundary,  right-hand-side,
and initial data $(\lambda, f, c_{0})$ the solution $(c_{i},\br_{i})$ of the
subdomain problem
\begin{equation} \label{Schur-subprob}
\begin{array}{rll}
\omega_{i}\partial_{t} c_{i} + \nabla \cdot \br_{i}
        & =f & \text{in} \; \Omega_{i} \times \left (0,T\right ),\\
\nabla c_{i} +\bD^{-1}_{i} \br_{i}
        &=0  & \text{in} \; \Omega_{i} \times \left (0,T\right ), \\
c_{i}    & = 0 & \text{on} \; \partial \Omega_{i} \cap \partial \Omega \times (0,T), \\
c_{i}    & =\lambda & \text{on} \; \Gamma \times \left (0,T\right ),\\
c_{i}(0) & = c_{0} & \text{in} \; \Omega_{i}.
\end{array} \vspace{-0.1cm}
\end{equation}
An extension of Theorem~\ref{thrm1} (to \JRo{the case of} non-homogeneous Dirichlet boundary conditions) guarantees that
$$
\begin{array}{lccc}
	\iD_i\,:& \Lambda \times L^{2} (0,T; L^{2}(\MK{\Omega_i}))\times H_{*}^{1} (\Omega_i)&
          \longrightarrow &H^{1}(0,T; M_i) \times L^{2}(0,T; \Sigma_i) \\[.1cm]
	& (\lambda, f, c_{0})& \mapsto & (c_{i},\br_{i})=(c_{i}(\lambda, f, c_{0}),\br_{i}(\lambda, f, c_{0}))
\end{array}
$$
is a well defined operator.  We also make use of the normal trace operator
$$
\begin{array}{lccc}
	\iF_i\,:& H^{1}(0,T;M_i)) \times L^{2}(0,T; \Sigma_i)& \longrightarrow & \All{L^{2}(0,T;  (H^{\half}_{00}(\Gamma))^{\prime})}\\[.1cm]
	& (c_{i},\br_{i})& \mapsto & \br_{i}\cdot \bn_{i}{ \mid_{\Gamma\times (0,T)}}
\end{array}
$$
which is then used to define the following operators:
$$
\begin{array}{lccc}
	\iS_i\,:& \Lambda & \longrightarrow & \All{L^{2}(0,T;  (H^{\half}_{00}(\Gamma))^{\prime})}\\[.1cm]
	& \lambda& \mapsto & - \iF_i\iD_i(\lambda,0,0)
\end{array}
$$
and
$$
\begin{array}{lccc}
	\chi_i\,:& L^{2} (0,T; L^{2}(\Omega_i))\times H_{*}^{1} (\Omega_i)& \longrightarrow & \All{L^{2}(0,T;  (H^{\half}_{00}(\Gamma))^{\prime})}\\[.1cm]
	& (f,c_0)& \mapsto & \iF_i\iD_i(0,f,c_0).
\end{array}
$$
Now letting $\iS=\iS_1+\iS_2$ and $\chi=\chi_1+\chi_2$ we may rewrite problem~(\ref{multi-mixed}),~(\ref{physicsTC})
as the interface problem
\begin{equation} \label{SchurIP}
	 \iS \lambda = \chi(f,c_0), \qquad \text{on} \; \; \Gamma \times \left (0,T\right ). \vspace{-0.1cm}
\end{equation}
The weak formulation of this problem is then
\All{\begin{equation} \label{SchurlPwk}
\begin{array}{l}
\hspace{-3cm}	\mbox{Find $ \lambda \in \Lambda$  such that:}\\[.2cm]
\hspace{-.5cm} \int_{0}^{T}\langle\iS\lambda,\eta\rangle =\int_{0}^{T}\langle\chi(f,c_0),\eta\rangle , \; \;  \forall \eta\in\Lambda,
\end{array}
\end{equation}
where $\langle\cdot,\cdot\rangle$ denotes the duality pairing between $H^{\half}_{00}(\Gamma)$ and $ (H^{\half}_{00}(\Gamma))^{\prime}$.}
The operator $\iS$ is the time-dependent Steklov-Poincar\'e operator, and to investigate its properties
we write the weak formulation of the interface problem~(\ref{Schur-subprob}) for $f=0$ and $c_0=0$:
\begin{equation} \label{Schursub-mixed}
\begin{array}{l}
\hspace{-1cm}
\mbox{For a.e. $ t \in (0,T) $, find $ c_{i}(t) \in  M_{i} $ and $ \br_{i}(t)  \in \Sigma_{i} $ such that}\\[.2cm]
	\begin{array}{rll}
		 \frac{d}{dt} ( \omega_{i}c_{i}, \mu )_{\Omega_{i}} + (\nabla \cdot \br_{i}, \mu)_{\Omega_{i}} & = 0, 
		 		& \forall \mu \in M_{i}, \\
		- (\nabla \cdot \bv, c_{i} )_{\Omega_{i}} + (\bD_{i}^{-1} \br_{i}, \bv )_{\Omega_{i}} 
				& = -\int_{\Gamma} \lambda (\bv \cdot \bn_{i}),  & \forall \bv \in \Sigma_{i}. \\
	\end{array} 
\end{array}
\end{equation}
For $\lambda\in\Lambda$ and for $ i=1, 2,$ we will denote by $(\cil,\ril)$
the solution of (\ref{Schursub-mixed}) for the data function $\lambda$.  Then for $\eta,\lambda\in\Lambda$ and
for almost every $t\in (0,T),$ we have 
$$
\begin{array}{rll}(\omega_{i} \partial_{t} \cil, \cie )_{\Omega_{i}} + (\nabla \cdot \ril, \cie)_{\Omega_{i}}
  & = 0, \\[.1cm]
- (\nabla \cdot \rie, \cil )_{\Omega_{i}} + (\bD_{i}^{-1} \ril, \rie )_{\Omega_{i}}
  & = -\int_{\Gamma} \lambda (\rie \cdot \bn_{i}).   
\end{array} 
$$
Now adding the first equation to the second equation in which the roles of $\lambda$ and
$\eta$ are reversed, integrating over time and summing on $i$, we obtain
$$
\begin{array}{rll}{{\sum_{i=1}^2\int_0^T}\left((\omega_{i} \partial_{t} \cil, \cie )_{\Omega_{i}} 
+ (\bD_{i}^{-1} \rie, \ril )_{\Omega_{i}}\right)}
  & = -{{\sum_{i=1}^2\int_0^T}\int_{\Gamma}  \eta(\ril \cdot \bn_{i}) }. \end{array} 
$$
Thus we see that
$$
\All{\int_{0}^{T} \langle\iS\lambda,\eta\rangle} =-{{\sum_{i=1}^2\int_0^T}  \!\!\!   \int_{\Gamma} (\ril \cdot \bn_{i}) \eta }
    ={\sum_{i=1}^2\int_0^T}   \!\!\!   \left((\omega_{i} \partial_{t} \cil, \cie )_{\Omega_{i}} 
      + (\bD_{i}^{-1} \ril, \rie )_{\Omega_{i}}\right),
$$
from which we conclude that $\iS$ is \MK{a positive} definite but non-symmetric, space-time interface operator.
Thus the existence and uniqueness of the solution of the space-time interface problem~(\ref{SchurlPwk})
does not follow in a standard way, and we have not pursued this question here.

Nonetheless, we solve a discretized version of problem~(\ref{SchurIP}) iteratively by using a Krylov method
(e.g. GMRES). Once the discrete approximation to $ \lambda $ is obtained, we can construct the multi-domain
solution of the discretized problem. Following the work in~\cite{NNPrecond, Mandelweights} for elliptic problems
with strong heterogeneities, we apply a Neumann-Neumann type preconditioner enhanced with averaging weights:
\begin{equation}\label{NNPrecondweight}
 \left (\sigma_{1} \iS_{1}^{-1} + \sigma_{2} \iS_{2}^{-1}\right ) \iS \lambda = \tilde{\chi}, 
 \end{equation}
where $ \sigma_{i}: \Gamma \times (0,T) \rightarrow [0,1] $ is such that $ \sigma_{1}+\sigma_{2} = 1$,
and $ \iS_{i}^{-1} $, the Neumann-to-Dirichlet operator, is the (pseudo)-inverse of $ \iS_{i} $, for $ i=1,2. $
%
%
%
%
\subsection{Method~2: Using Optimized Schwarz Waveform Relaxation (OSWR)}\label{subsec:M2}
The function spaces that are needed to give the interface formulation of method 2 are
$$ \Xi :=  H^{1}(0,T; L^{2}(\Gamma)),\quad \mbox{ and, for }i=1,2,\quad
M_i =L^2(\Omega_i)\quad \mbox{ and }\quad \tSigma_i = \iH({\Div}, \Omega_i).
$$
To define the \MK{Robin-to-Robin} operator we first define for $i=1,2,$ the following solution operator $\iR_i$
which depends on the parameter $\prm_{i,j};\,\,j=3-i:$
$$
\begin{array}{lccc}
	\iR_i\,:& \Xi \times L^{2} (0,T; L^{2}(\Omega_i))\times H_{*}^{1} (\Omega_i)& 		
		\longrightarrow &\Xi \times H^{1}(0,T; M_i) \times L^{2}(0,T; \tSigma_i) \\[.1cm]
	& (\xi, f, c_{0})& \mapsto & (\xi,c_{i},\br_{i})=(\xi,c_{i}(\xi, f, c_{0}),\br_{i}(\xi, f, c_{0}))
\end{array}
$$
where $(c_{i},\br_{i})=(c_{i}(\xi, f, c_{0}),\br_{i}(\xi, f, c_{0}))$ is the solution to the problem
\begin{equation} \label{Schwarz-subprob}
\begin{array}{rll} 
	\omega_{i}\partial_{t} c_{i} + \nabla \cdot \br_{i} & =f & \text{in} \; \Omega_{i} \times \left (0,T\right ),\\
	\nabla c_{i} +\bD^{-1}_{i} \br_{i} &=0  & \text{in} \; \Omega_{i} \times \left (0,T\right ), \\
	c_{i} & = 0 & \text{on} \; \partial \Omega_{i} \cap \partial \Omega \times (0,T), \\
	-\br_i\cdot\bn_i+\prm_{i,j}c_{i} & =\xi& \text{on} \; \Gamma \times \left (0,T\right ),\\  
	c_{i}(0) & = c_{0} & \text{in} \; \Omega_{i}.
\end{array} \vspace{-0.1cm}
\end{equation}
(As stated earlier the \CJ{parameters $\prm_{i,j}$} will be chosen is such a way as 
to optimize the convergence of the algorithm). The existence and uniqueness of the solution of
problem~(\ref{Schwarz-subprob}) is guaranteed by \MK{Theorem~\ref{thrm2}}. \\
Next, to impose the interface conditions \eqref{RobinTCs} we will need the following interface operators defined
for $i=1,2,$ and \CJ{$j=3-i$}: 
$$
\begin{array}{lccc}
\hspace{-0.2cm}\iB_i: & \hspace{-0.2cm} \JRo{\left (\Xi \times H^{1}(0,T; M_j) \times L^{2}(0,T; \tSigma_j) \right ) \cap \text{Im}(\iR_{j})} &  \hspace{-0.3cm} \longrightarrow & \All{\Xi}\\[.1cm]
   & (\xi,c_j,\br_{j})&  \hspace{-0.6cm}
     \mapsto &  \hspace{-0.5cm}(-\br_{j}\cdot \bn_{i}+\frac{\prm_{i,j}}{\prm_{j,i}}(\xi+\br_j\cdot\bn_j)){ \mid_{\Gamma\times (0,T)}}
\end{array}
$$	
\JRo{\begin{remark} \label{remarkRobin}
To see that $ \text{Im}(\iB_{i}) \subset \Xi $ (instead of simply $ L^{2}(0,T; L^{2}(\Gamma)) $, we note that \eqref{Robinvar} implies that
$\pmb{D}^{-1} \br(t)= -\nabla c(t)$ in $\mathcal{D}^{\prime}(\Omega)$ for a.e. $t \in (0,T)$. Since $\br(t) $ is in $H(\Div,\Omega)$, we have $c(t)\in H^1(\Omega)$,  for a.e. $t \in (0,T)$. Consequently, $ c_{i}(t)$ is in $ H^{1}(0,T; H^{1}(\Omega_{i})) $. This along with the fact that $ \xi \in \Xi $ implies that $ \br_{i} \cdot \bn_{i}{ \mid_{\Gamma\times (0,T)}} \in \Xi $.
\end{remark}}\\
Now, defining
$$
\begin{array}{lccccc}
	\iS_R\,:& \Xi\times\Xi & \longrightarrow & \All{\Xi} \times  \All{\Xi}\\[.1cm]
	& \begin{pmatrix}\xi_1  \\[.1cm]  \xi_2\end{pmatrix}	& \mapsto 
	&\begin{pmatrix} \xi_1- \iB_1\iR_2(\xi_2,0,0)  \\[.1cm]  \xi_2-\iB_2\iR_1(\xi_1,0,0) \end{pmatrix}
\end{array}
$$
and
$$
\begin{array}{lccc}
	\chi_R\,:&L^{2} (0,T; L^{2}(\Omega_i))\times \MK{H_{*}^{1} (\Omega_i)}& \longrightarrow & \All{\Xi}\times \All{\Xi}\\[.1cm]
	& (f,c_0)& \mapsto &\begin{pmatrix} \iB_1\iR_2(0,f,c_0) \\  \iB_2\iR_1(0,f,c_0) \end{pmatrix},
\end{array}
$$
we can write the interface problem as 
\begin{equation} \label{SchwarzIPshort}
\iS_{R} \begin{pmatrix}\xi_{1} \\ \xi_{2} \end{pmatrix}
 = \chi_{R} (f,c_0)\quad \text{on} \; \Gamma \times (0,T).
\end{equation}
\CJa{We then write \eqref{SchwarzIPshort}} in weak form as
\All{
\begin{equation} \label{SchwarzlPwk}
\begin{array}{l}
  \mbox{Find $ (\xi_1,\xi_2) \in \Xi\times\Xi$  such that}\\[.2cm]
    \begin{array}{rll}
      \int_{0}^{T} \int_{\Gamma} \iS_R\begin{pmatrix}\xi_1\\ \xi_2\end{pmatrix} \cdot \begin{pmatrix}\zeta_1\\
        \zeta_2\end{pmatrix}
          =\int_{0}^{T} \int_{\Gamma} \chi_R(f,c_0) \cdot \begin{pmatrix}\zeta_1\\ \zeta_2\end{pmatrix} , \; \;
            \forall (\zeta_1,\zeta_2)\in  \Xi\times\Xi.
      \end{array} 
\end{array}
\end{equation}}
In order to study the interface operator $\iS_R$, we \CJ{proceed as in the Section \ref{subsec:M1}} by giving
the weak formulation of the relevant subdomain problems (here \eqref{Schwarz-subprob} for $i=1,2$ and $j=3-1$)
for $f=0$ and $c_0=0$: 
\begin{equation} \label{Schwarzsub-mixed}
\begin{array}{l}
	\mbox{For a.e. $ t \in (0,T) $, find $ c_{i}(t) \in  M_{i} $ and $ \br_{i}(t)  \in \tSigma_{i} $
          such that,  $\forall \mu \in M_{i}$  and $ \forall \bv \in \tSigma_{i},$} \\[.2cm]
	\begin{array}{rll}
		 \frac{d}{d t} ( \omega_{i} c_{i}, \mu )_{\Omega_{i}} + (\nabla \cdot \br_{i}, \mu)_{\Omega_{i}}& = 0, 
		 		 \\[.1cm]
		- (\nabla \cdot \bv, c_{i} )_{\Omega_{i}} + (\bD_{i}^{-1} \br_{i}, \bv )_{\Omega_{i}} 
				+  \int_{\Gamma}\frac{1}{\prm_{i,j}} (\br_i \cdot \bn_{i}) (\bv \cdot \bn_{i})
				& = -\int_{\Gamma}\frac{1}{\prm_{i,j}} \xi (\bv \cdot \bn_{i}). 
	\end{array} 
\end{array}
\end{equation}
Now for any $\zeta\in\Xi$ letting  $\ciz\in H^1(0,T;M_i)$ and $\riz\in L^2(0,T;\tSigma_i)$
be such that $\iR_i(\zeta,0,0)=(\zeta,\ciz,\riz),$  we have for any pair of elements $\xi$ and $\zeta$ in $\Xi$
and for a.e. $ t \in (0,T) $ that
$$
\begin{array}{rl}
    ( \omega_{i}\partial_t \cix, \ciz )_{\Omega_{i}} + (\nabla \cdot \rix, \ciz)_{\Omega_{i}}  = 0,\hspace{2.5cm}&  \\[.2cm]
      - (\nabla \cdot \riz, \cix )_{\Omega_{i}} + (\bD_{i}^{-1} \rix, \riz )_{\Omega_{i}} 
	+  \int_{\Gamma}\frac{1}{\prm_{i,j}} (\rix \cdot \bn_{i}) (\riz \cdot \bn_{i})    \\[.2cm]
	& \hspace{-3.5cm}= -\int_{\Gamma}\frac{1}{\prm_{i,j}} \xi (\riz \cdot \bn_{i}). \\
\end{array}
$$
Next we add the first of these two equations to the second in which the roles of $\zeta$ and $\xi$
have been interchanged to obtain
\begin{equation}\begin{array}{rll}\label{eq:utile}
	( \omega_{i}\partial_t \cix, \ciz )_{\Omega_{i}} + (\bD_{i}^{-1} \rix, \riz )_{\Omega_{i}} 
	+  \int_{\Gamma}\frac{1}{\prm_{i,j}} (\rix \cdot \bn_{i}) (\riz \cdot \bn_{i}) \\[.2cm]
	\hspace{0cm}  = -\int_{\Gamma}\frac{1}{\prm_{i,j}} \zeta (\rix \cdot \bn_{i}),
\end{array}\end{equation}
and this holds for any pair of elements $\xi$ and $\zeta$ in $\Xi.$ 
Now we consider the case in which the parameters $\prm_{i,j},\, i=1,2,\, j=3-i,$ are constant
and apply \eqref{eq:utile} with $ \xi =\xi_j $ and $ \zeta=\zeta_i, $ to  obtain
$$
\begin{array}{rll}
  \All{\int_{0}^{T} \int_{\Gamma} \!\!\iS_R\begin{pmatrix}\xi_1\\[.1cm] \xi_2\end{pmatrix} \cdot 
    \begin{pmatrix}\zeta_1\\[.1cm] \zeta_2\end{pmatrix}  \!\!  }
      = \sum_{i=1}^2 \int_0^T\!\!\! \Big\{ \int_\Gamma (\xi_i - \frac{\prm_{i,j}}{\prm_{j,i}} \xi_j)\zeta_i	
	 +({\prm_{1,2}}+{\prm_{2,1}} )\Big\{( \omega_{i}\partial_t c_i(\xi_j), c_i(\zeta_i) )_{\Omega_{i}} \\[.4cm]
	 + (\bD_{i}^{-1} \br_i(\xi_j), \br_i(\zeta_i ))_{\Omega_{i}} 
	 +  \int_{\Gamma}\frac{1}{\prm_{i,j}} ( \br_i(\xi_j) \cdot \bn_{i}) ( \br_i(\zeta_i ) \cdot \bn_{i}) \Big\}\Big\}
\end{array} 
$$
As for method 1, we obtain a non-symmetric, space-time interface operator, but here it is also not positive definite.
We solve the discretized problem iteratively using Jacobi iterations or GMRES.  The former choice is equivalent to
the OSWR algorithm, and in the next subsection we show that this mixed form of the algorithm converges.
%
%
%
%
%
\subsubsection{The OSWR algorithm} 
We consider the general case \JR{in which} $ \Omega $ is decomposed into $ I $ non-overlapping subdomains
$ \Omega_{i} $.
We denote by $ \Gamma_{i,j} $ the interface between two neighboring subdomains $ \Omega_{i} $ and $ \Omega_{j} $,
$ \Gamma_{i,j} = \partial \Omega_{i} \cap \partial \Omega_{j} \cap \Omega $. Let $ \mathcal{N}_{i} $ be the set
of indices of the neighbors of the subdomain $ \Omega_{i}$, $ i=1, \hdots, I $.
The OSWR method may be written as follows: at the $ k^{th} $ iteration, we solve in each subdomain the problem
\vspace{-0.1cm}
\begin{equation} \label{algorithm}
\left . \begin{array}{lll} \partial_{t} c_{i}^{k} + \nabla \cdot \br_{i}^{k} &
  = f, & \text{in} \; \Omega_{i} \times \left (0,T\right ),\\
\nabla c_{i}^{k} +\bD_{i}^{-1} \br_{i}^{k}
  &=0,  & \text{in} \; \Omega_{i} \times \left (0,T\right ),\\
-\br_{i}^{k} \cdot \bn_{i} + \alpha_{i,j} c_{i}^{k}
  &=-\br_{j}^{k-1} \cdot \bn_{i} + \alpha_{i,j} c_{j}^{k-1}, & \text{on} \; \Gamma_{i,j} \times \left (0,T\right ),
    \forall j \in \mathcal{N}_{i}, 
\end{array} \right .
\end{equation}
where, \JR{for $ i=1, \cdots, I $, $ j \in \iN_{i} $, $ \alpha_{i,j} > 0 $ is a Robin parameter}. 
The initial value is that of $ c_{0} $ in each subdomain. Moreover,
\JR{$ \left (g_{i,j}\right ) :=-\br_{j}^{0} \cdot \bn_{i} + \alpha_{i,j} c_{j}^{0}$ is
an initial guess on $\Gamma_{i,j}$, for $ i=1, \cdots, I $, $ j \in \mathcal{N}_{i} $,
in order to start the first iterate. }
%
%
%
%
\begin{theorem}\label{thrm:convergence}
Let \CJ{$\bD \in \pmb{W^{1,\infty}(\Omega)}$}, 
$ f \in H^{1}(0,T; L^{2}(\Omega)) $ \JR{and} $ c_{0} \in H^{2}(\Omega) \cap H^{1}_{0}(\Omega) $ and \JR{let
$ \alpha_{i,j} \in L^{\infty}(\partial \Omega_{i}) $ be } such that $ \alpha_{i,j} \geq \alpha_{0} > 0 $ for
$ i=1, \cdots, I $, $ j \in \iN_{i} $.
\JR{Algorithm~\eqref{algorithm}, initialized by $ (g_{i,j}) $ in
$ H^{1}\left (0,T; L^{2}\left (\Gamma_{i,j}\right )\right ) $, $ i=1, \cdots, I $, $ j \in \mathcal{N}_{i} $,
defines} a sequence of iterates 
$$
(c_{i}^{k}, \br _{i}^{k})\in W^{1, \infty}(0,T; L^{2}(\Omega_{i})) \times \left (L^{2}(0,T;
  \iH (\emph{\Div}, \Omega_{i})) \cap H^{1}(0,T; \pmb{L^{2}(\Omega_{i})})\right ),
$$ 
for $ i=1, \cdots, I $,
that converges to the weak solution $ (c,\br) $ of  problem~\eqref{mixed}.
\end{theorem}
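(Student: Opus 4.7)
The strategy is the classical Lions-type energy argument adapted to the mixed setting. By linearity of~\eqref{algorithm}, convergence amounts to showing that, when $f=0$, $c_0=0$ and the target solution $(c,\br)=(0,0)$, the iterates $(e_i^k,\epsilon_i^k) := (c_i^k,\br_i^k)$ tend to zero. A first task is to check that every iteration is well-posed and produces iterates in the announced regularity. By Remark~\ref{remarkRobin}, if the incoming Robin data on $\Gamma_{i,j}$ lies in $H^1(0,T;L^2(\Gamma_{i,j}))$, then so does the outgoing datum $-\br_i^k\cdot\bn_i+\alpha_{i,j}c_i^k$. Theorem~\ref{thrm2} then yields existence, uniqueness and the required regularity for $(c_i^k,\br_i^k)$ at each step, the recursion being initialized by the hypotheses on $(g_{i,j})$, $f$ and $c_0$.

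Next, I would test the two equations of the error subdomain problem with $\mu=e_i^k$ and $\bv=\epsilon_i^k$, add the two resulting equations, and integrate in time. Since $e_i^k(0)=0$, this gives the energy identity
\begin{equation*}
\tfrac12\|\sqrt{\omega_i}\,e_i^k(t)\|_{\Omega_i}^2
+\!\int_0^t\!(\bD_i^{-1}\epsilon_i^k,\epsilon_i^k)_{\Omega_i}\,ds
+\sum_{j\in\iN_i}\!\int_0^t\!\!\int_{\Gamma_{i,j}}\!\tfrac{1}{\alpha_{i,j}}(\epsilon_i^k\!\cdot\!\bn_i)^2
=-\sum_{j\in\iN_i}\!\int_0^t\!\!\int_{\Gamma_{i,j}}\!\tfrac{g_{i,j}^{k-1}}{\alpha_{i,j}}(\epsilon_i^k\!\cdot\!\bn_i),
\end{equation*}
where $g_{i,j}^{k-1}:=-\epsilon_j^{k-1}\cdot\bn_i+\alpha_{i,j}e_j^{k-1}$ is the incoming Robin datum. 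The heart of the argument is then the algebraic identity $(a+b)^2-(-a+b)^2=4ab$, applied pointwise with $a=\epsilon_i^k\cdot\bn_i$ and $b=\alpha_{i,j}e_i^k$. This allows the right-hand side to be absorbed into a difference of squared $L^2(\Gamma_{i,j})$-norms: of the outgoing Robin trace $\epsilon_i^k\cdot\bn_i+\alpha_{i,j}e_i^k$ and of the incoming one $-\epsilon_i^k\cdot\bn_i+\alpha_{i,j}e_i^k=g_{i,j}^{k-1}$.

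Summing over $i$ and over iterations $k=1,\dots,K$, one uses that (under the natural symmetry $\alpha_{i,j}=\alpha_{j,i}$) the outgoing Robin trace leaving $\Omega_i$ at step $k$ is exactly the incoming Robin trace entering $\Omega_j$ at step $k+1$, so the interface contributions telescope. One is left with
\begin{equation*}
\sum_{i,k}\Bigl(\tfrac12\|\sqrt{\omega_i}\,e_i^k(T)\|_{\Omega_i}^2+\|\sqrt{\bD_i^{-1}}\,\epsilon_i^k\|_{L^2(0,T;\pmb{L^{2}(\Omega_i)})}^2\Bigr)+Q^K\le Q^0,
\end{equation*}
where $Q^k\ge 0$ collects the weighted $L^2(0,T;L^2(\Gamma_{i,j}))$-norms of the outgoing Robin traces at iteration $k$, and $Q^0$ is finite by the assumption $(g_{i,j})\in H^1(0,T;L^2(\Gamma_{i,j}))$. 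The resulting convergent series forces $\epsilon_i^k\to 0$ in $L^2(0,T;\pmb{L^{2}(\Omega_i)})$ and $e_i^k(T)\to 0$ in $L^2(\Omega_i)$; repeating the identity on $[0,t]$ gives pointwise convergence at every $t\in[0,T]$.

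The principal obstacle is two-fold. First, one has to arrange the bookkeeping so that the interface contributions telescope exactly; this requires either $\alpha_{i,j}=\alpha_{j,i}$ or an appropriate weighting of the two Robin conditions. Second, the energy identity only delivers convergence in weak/$L^2$-type norms, whereas the theorem asserts convergence of $(c_i^k,\br_i^k)$ to $(c,\br)$ in the full regularity spaces $W^{1,\infty}(0,T;L^2(\Omega_i))\times(L^2(0,T;\iH(\Div,\Omega_i))\cap H^1(0,T;\pmb{L^{2}(\Omega_i)}))$. The final step is therefore to feed the just-established decay of the outgoing Robin trace into the a priori estimates of Lemmas~\ref{lemma:energyestimRobin}--\ref{lem:estimates2} applied to the error problem, which propagates the convergence to all the regularity norms appearing in the statement.
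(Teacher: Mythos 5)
Your overall strategy is the same as the paper's: reduce by linearity to $f=0$, $c_0=0$, test with $(c_i^k,\br_i^k)$, convert the interface term into a difference of squares of outgoing and incoming Robin traces, and telescope over the iterations to get a convergent series that forces the iterates to zero. The well-posedness step via Remark~\ref{remarkRobin} and Theorem~\ref{thrm2} is also exactly what the paper does. However, there are two concrete gaps.

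First, your difference-of-squares identity $(a+b)^2-(-a+b)^2=4ab$ with $b=\alpha_{i,j}c_i^k$ only telescopes when $\alpha_{i,j}=\alpha_{j,i}$, and you leave the asymmetric case as an unresolved ``obstacle.'' The theorem assumes only $\alpha_{i,j}\ge\alpha_0>0$ (indeed two-sided optimized parameters with $\alpha_{i,j}\ne\alpha_{j,i}$ are a standard and useful choice in OSWR). The paper resolves this with the modified identity
$\bigl(-\br_i^k\cdot\bn_i+\alpha_{i,j}c_i^k\bigr)^2-\bigl(-\br_i^k\cdot\bn_i-\alpha_{j,i}c_i^k\bigr)^2
=2(\alpha_{i,j}+\alpha_{j,i})\,c_i^k\,(-\br_i^k\cdot\bn_i)+(\alpha_{i,j}^2-\alpha_{j,i}^2)(c_i^k)^2$,
whose second square is exactly (the square of) the datum sent to $\Omega_j$ for iteration $k+1$; the leftover term $(\alpha_{i,j}^2-\alpha_{j,i}^2)(c_i^k)^2$ is then absorbed by Gronwall's lemma after summing over $k$. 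Without this (or an equivalent weighting), your proof covers only a special case of the statement.

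Second, your final step --- ``feed the decay of the outgoing Robin trace into the a priori estimates of Lemmas~\ref{lemma:energyestimRobin}--\ref{lem:estimates2}'' --- does not work as stated. Those estimates bound the higher norms of the solution by the $H^1(0,T;L^2(\Gamma_{i,j}))$-norm of the Robin datum, whereas your energy argument only shows that the outgoing traces are summable in $L^2(0,T;L^2(\Gamma_{i,j}))$; you have no decay of their time derivatives, so the a priori bounds give boundedness, not convergence, in the stronger norms. The paper instead differentiates the whole subdomain system and the Robin transmission conditions in time and re-runs the identical telescoping argument on the differentiated quantities (this is where the hypothesis $g_{i,j}\in H^1(0,T;L^2(\Gamma_{i,j}))$ is used, to make $\tilde B^0$ finite); convergence of $\nabla\cdot\br_i^k$ in $L^\infty(0,T;L^2(\Omega_i))$ then follows from the first equation via $\|\nabla\cdot\br_i^k\|\le\|\partial_t c_i^k\|$. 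You should replace your last paragraph by this second pass of the energy/telescoping argument.
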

\begin{proof}
\JRo{The sequence $ (c_{i}^{k}, \br _{i}^{k})_{k} $ is well-defined according to Theorem~\ref{thrm2} and Remark~\ref{remarkRobin}}. Now, to prove the convergence of algorithm~\eqref{algorithm}, as the equations are linear, we can take
\JR{$ f =0$ and $ c_{0}=0 $} and show that the sequence $ \left (c_{i}^{k}, \br_{i}^{k}\right )_{k} $
of iterates converges to zero in suitable norms. \\
%
%
%
%
To begin, we write the variational formulation of~\eqref{algorithm} (with $ f=0 $):\\

\noindent
\hspace{1mm}
\JR{For a.e.~$ t~\in~(0,T) $,
find $ c^{k}_{i}\left (t\right ) \in M_{i} $ and $ \br_{i}^{k} \left (t\right ) \in \tilde{\Sigma_{i}} $ such that}
\vspace{-2.5mm}
\begin{equation} \label{mixed-sub}
\left .\begin{array}{lll} \frac{d}{dt} (\omega c_{i}^{k}, \mu_{i})_{\Omega_{i}}
  +  (\nabla \cdot \br_{i}^{k}, \mu_{i})_{\Omega_{i}} &= 0, & \forall \mu_{i} \in M_{i}, \\
  - (\nabla \cdot \bv_{i}, c_{i}^{k})_{\Omega_{i}} + (\bD_{i}^{-1} \br_{i}^{k}, \bv_{i})_{\Omega_{i}}
  & = \sum_{j \in \iN_{i}} \int_{\Gamma_{i,j}} c_{i}^{k} (-\bv_{i} \cdot \bn_{i}), & \forall \bv_{i} \in \tilde{\Sigma_{i}}.
\end{array}\right .
\end{equation}
%
%
%
%
Choosing $\mu_{i}= c_{i}^{k} $ and $ \bv_{i}=\br_{i}^{k} $ in~\eqref{mixed-sub}, then adding the two resulting equations
and \JR{replacing} the boundary term by using the equation
\CJa{
\begin{multline*}
\left (-\br_{i}^{k}\cdot \bn_{i} + \alpha_{i,j} c_{i}^{k}\right )^{2}
   - \left (-\br_{i}^{k}\cdot \bn_{i} - \alpha_{j,i} c_{i}^{k}\right )^{2}\\
 = 2\left (\alpha_{i,j} + \alpha_{j,i}\right ) c_{i}^{k}\left (-\br_{i}^{k} \cdot \bn_{i}\right )
   + \left (\alpha_{i,j}^{2} - \alpha_{j,i}^{2}\right ) \left (c_{i}^{k}\right )^{2},
\end{multline*}
}
we obtain
\begin{multline*}
\frac{1}{2} \frac{d}{dt} (\omega_{i} c_{i}^{k}, c_{i}^{k})_{\Omega_{i}}
  + \left (\bD^{-1}_{i} \br_{i}^{k}, \br_{i}^{k}\right )_{\Omega_{i}}
  + \sum_{j \in \iN_{i}} \int_{\Gamma_{i,j}} \frac{1}{2\left (\alpha_{i,j}+\alpha_{j,i}\right )}
     \left (-\br_{i}^{k} \cdot \bn_{i} - \alpha_{j,i} c_{i}^{k}\right )^{2} \\ 
 = \sum_{j \in \iN_{i}} \int_{\Gamma_{i,j}} \frac{1}{2\left (\alpha_{i,j}+\alpha_{j,i}\right )} \left (-\br_{i}^{k} \cdot \bn_{i}
    + \alpha_{i,j} c_{i}^{k}\right )^{2} + \frac{1}{2} \sum_{j \in \iN_{i}}\int_{\Gamma_{i,j}} \left (\alpha_{j,i} - \alpha_{i,j}\right )
    \left (c_{i}^{k}\right )^{2}.
\end{multline*}
We then integrate over $ \left (0,t\right ) $ for a.e. $ t \in (0,T] $ and apply the Robin boundary conditions.
By using the properties of $ \omega $ and $ \bD $ and recalling that the Robin coefficients
\JR{$ \alpha_{i,j}$ belong to} $L^{\infty }\left (\Gamma_{i,j}\right )$, $ i \in 1, \cdots, I $, $ j \in \iN_{i} $,
we obtain, \JR{for some constant $ C $,}
\begin{multline*}
\omega_{-} \| c_{i}^{k}\left (t\right ) \|^{2}_{\Omega_{i}}
  + 2 \delta_{-} \int_{0}^{t} \|\br_{i}^{k}\left (s\right ) \|^{2} ds
  + \sum_{j \in \iN_{i}}  \int_{0}^{t}\int_{\Gamma_{i,j}} \frac{1}{\alpha_{i,j}+\alpha_{j,i}}
    \left (-\br_{i}^{k} \cdot \bn_{i} - \alpha_{j,i} c_{i}^{k}\right )^{2} \\
\leq \sum_{j \in \iN_{i}}  \int_{0}^{t}\int_{\Gamma_{i,j}} \frac{1}{\alpha_{i,j}+\alpha_{j,i}} \left (-\br_{j}^{k-1} \cdot \bn_{i}
  + \alpha_{i,j} c_{j}^{k-1}\right )^{2} + C \int_{0}^{t} \| c_{i}^{k}\left (s\right )\|^{2}_{\Omega_{i}} ds.
\end{multline*}
Now we sum over all subdomains and \JR{define for $k \ge 1$ and for a.e. $ t \in (0,T] $}
\begin{align*}
E^{k}\left (t\right )
  &= \sum_{i=1}^{I} \left ( \omega_{-} \| c_{i}^{k}\left (t\right ) \|^{2}_{\Omega_{i}}
      + 2 \delta_{-} \int_{0}^{t} \|\br_{i}^{k}\left (s\right ) \|^{2} ds \right ), \\
B^{k} \left (t\right )
  &= \sum_{i=1}^{I} \sum_{j \in \iN_{i}}  \int_{0}^{t}\int_{\Gamma_{i,j}} \frac{1}{\alpha_{i,j}+\alpha_{j,i}}
     \left (-\br_{j}^{k} \cdot \bn_{i} + \alpha_{i,j} c_{j}^{k}\right )^{2}.
\end{align*}
Then we have, for all $ k >0 $
$$
E^{k} \left (t\right )+ B^{k}\left (t\right ) \leq B^{k-1} \left (t\right )
  + C \sum_{i=1}^{I} \int_{0}^{t} \| c_{i}^{k}\left (s\right )\|^{2}_{\Omega_{i}} ds.
$$
Now sum over the iterates for any given $ K >0 $:
\begin{equation}\label{gronwall}
 \sum_{k=1}^{K} E^{k} \left (t\right ) \leq B^{0}\left (t\right )
   + C \sum_{k=1}^{K} \sum_{i=1}^{I} \int_{0}^{t} \| c_{i}^{k}\left (s\right )\|^{2}_{\Omega_{i}} ds, 
\end{equation}
where
\CJa{
\begin{align*}
B^{0}\left (t\right )
&= \sum_{i=1}^{I} \sum_{j \in \iN_{i}}  \int_{0}^{t}\int_{\Gamma_{i,j}} \frac{1}{\alpha_{i,j}+\alpha_{j,i}} (g_{i,j})^{2},
\end{align*}
}
for $ g_{i,j} $ the initial guess
\JR{on  $\Gamma_{i,j}$}.
From the definition of $ E^{k} $, \JR{since} $ \delta_{-} >0 $, we have
$$
\sum_{k=1}^{K} \sum_{i=1}^{I} \omega_{-} \| c_{i}^{k}\left (t\right ) \|^{2}_{\Omega_{i}}
  \leq B^{0}\left (t\right ) + C \sum_{k=1}^{K} \sum_{i=1}^{I} \int_{0}^{t} \| c_{i}^{k}\left (s\right )\|^{2}_{\Omega_{i}} ds.
$$
Thus, by applying Gronwall's lemma, we obtain for any $ K>0 $ and a.e. $ t \in \left (0,T\right ) $
\begin{equation}\label{cL2}
 \sum_{k=1}^{K} \sum_{i=1}^{I} \| c_{i}^{k}\left (t\right ) \|^{2}_{\Omega_{i}}
   \leq e^{\frac{CT}{\omega_{-}}} \frac{B^{0}\left (T\right )}{\omega_{-}}.
\end{equation}
This along with~\eqref{gronwall} implies
\begin{equation} \label{rL2}
 \sum_{k=1}^{K} \sum_{i=1}^{I} 2 \delta_{-} \int_{0}^{t}  \|\br_{i}^{k}\left (s\right ) \|^{2} ds
   \leq (1+\frac{CT}{\omega_{-}}  e^{\frac{CT}{\omega_{-}}}) B^{0}\left (T\right ), \quad \forall K >0.
\end{equation}
The inequalities~\eqref{cL2},~\eqref{rL2} imply that the sequence $ c_{i}^{k} $ tends to $ 0 $
in $ L^{\infty}\left (0,T; L^{2}\left (\Omega_{i}\right )\right ) $ and $ \br_{i}^{k} $ converges to $ 0 $
in $ L^{2}\left (0,T; \pmb{L^{2}(\Omega_{i})}\right ) $ for each $ i \in 1, \cdots, I $ as
$ k \rightarrow \infty $. \\
%
%
%
%
To \JR{show} convergence in higher norms, we differentiate the first and the second equations of~\eqref{mixed-sub}
with respect to $ t $, then take
\JR{$ \mu_{i} = \partial_{t}c_{i}^{k}$ and $\bv_{i} = \partial _{t} \br_{i}^{k} $} and add
\JR{the}
resulting equations together, we see that (after bounding the left hand side using the assumptions on $ \omega $
and $ \bD $)
\begin{equation*}
\frac{\omega_{-}}{2}\frac{d}{dt} \| \partial_{t} c_{i}^{k} \|^{2}_{\Omega_{i}}
   + \delta_{-} \| \partial _{t} \br_{i}^{k} \|^{2}_{\Omega_{i}}
 \leq \sum_{j \in \iN_{i}} \int_{\Gamma_{i,j}} \partial _{t} c_{i}^{k} (-\partial_{t} \br_{i}^{k} \cdot \bn_{i} ).
\end{equation*}
We \JR{proceed} as in the previous \JR{argument} with the use of Robin boundary conditions after differentiating
with respect to $ t $
\begin{equation*}
-\partial _{t}\br_{i}^{k} \cdot \bn_{i} + \alpha_{i,j} \partial _{t}c_{i}^{k}
  = -\partial _{t}\br_{j}^{k-1} \cdot \bn_{i} + \alpha_{i,j} \partial _{t} c_{j}^{k-1},  \; \;
    \text{on} \; \Gamma_{i,j} \times \left (0,T\right ), \forall j \in \mathcal{N}_{i}. 
\end{equation*}
We then obtain, for all $ k >0 $
$$
\tilde{E}^{k} \left (t\right )+ \tilde{B}^{k}\left (t\right )
 \leq \tilde{B}^{k-1} \left (t\right )
      + C \sum_{i=1}^{I} \int_{0}^{t} \| \partial _{t} c_{i}^{k}\left (s\right )\|^{2}_{\Omega_{i}} ds.
$$
where
\begin{align*}
\tilde{E}^{k}\left (t\right )
  &= \sum_{i=1}^{I} \left ( \omega_{-} \| \partial _{t} c_{i}^{k}\left (t\right ) \|^{2}_{\Omega_{i}}
      + 2 \delta_{-}  \int_{0}^{t} \| \partial _{t} \br_{i}^{k}\left (s\right ) \|^{2} ds \right ), \\
\tilde{B}^{k} \left (t\right )
  &= \sum_{i=1}^{I} \sum_{j \in \iN_{i}}  \int_{0}^{t}\int_{\Gamma_{i,j}} \frac{1}{\alpha_{i,j}+\alpha_{j,i}}
         \left (-\partial _{t} \br_{j}^{k} \cdot \bn_{i} + \alpha_{i,j} \partial _{t} c_{j}^{k}\right )^{2}.
\end{align*}
%
%
%
%
%
\MK{Now, as before, we  sum over the iterates for any $ K >0 $ and apply Gronwall's lemma
to obtain for any $ K>0 $}
and a.e. $ t \in \left (0,T\right ) $
\begin{equation} \label{ctLinfty}
 \sum_{k=1}^{K} \sum_{i=1}^{I} \| \partial _{t} c_{i}^{k}\left (t\right ) \|^{2}_{\Omega_{i}}
   \leq e^{\frac{CT}{\omega_{-}}}\frac{\tilde{B}^{0}\left (T\right )}{\omega_{-}},
%
   \quad \text{with }
\tilde{B}^{0}\left (t\right )
= \sum_{i=1}^{I} \sum_{j \in \iN_{i}}  \int_{0}^{t}\int_{\Gamma_{i,j}} \frac{1}{\alpha_{i,j}+\alpha_{j,i}} (\partial _{t} g_{i,j})^{2}.
\end{equation}
\CJa{This along with~\eqref{cL2} } \JR{shows} that the sequence
$ c_{i}^{k} $ converges to $ 0 $ in $ W^{1, \infty}(0,T; L^{2}(\Omega_{i}) $ as $ k \rightarrow \infty $,
  for $ i~=~1,~\cdots,~I $. \\
%
%
%
Now we choose $ \mu_{i} = \nabla \cdot \br_{i}^{k} $ in the first equation of~\eqref{mixed-sub} to obtain
for a.e. $ t \in (0,T) $
\begin{equation*}
\| \nabla \cdot \br_{i}^{k} \|^{2} = -\left (\partial _{t} c_{i}^{k}, \nabla \cdot \br_{i}^{k} \right ) \leq  \| \partial _{t} c_{i}^{k}\| \, \| \nabla \cdot \br_{i}^{k} \|. 
\end{equation*}
or
$$
\| \nabla \cdot \br_{i}^{k} \| \leq  \| \partial _{t} c_{i}^{k}\| \quad \forall t \in (0,T).
$$
Hence, by~\eqref{ctLinfty} we have
\begin{equation} \label{divrLinfty}
\| \nabla \cdot \br_{i}^{k} \| _{L^{\infty}(0,T; L^{2}(\Omega_{i}))}
   \rightarrow 0 \; \text{as} \; k \rightarrow \infty.
\end{equation}
\CJ{This shows that the sequence $ \br_{i}^{k} $ converges to $ 0 $
in $ L^{2}\left (0,T; H (\Div, \Omega_{i})\right ) $}. Moreover, it follows from the definition of
$ \tilde{E}^{k} $ and~\eqref{ctLinfty} that 
\begin{equation*} \label{rtL2}
 \sum_{k=1}^{K} \sum_{i=1}^{I} 2 \delta_{-} \int_{0}^{t} \| \partial _{t} \br_{i}^{k}\left (s\right ) \|^{2} ds
   \leq (1+\frac{CT}{\omega_{-}} e^{\frac{CT}{\omega_{-}}}) \tilde{B}^{0}\left (T\right ), \quad \forall K > 0.
\end{equation*}
So that the sequence $ \partial_{t} \br_{i}^{k} $ also converges to $ 0 $
in $ L^{2} (0,T; \pmb{L^{2}(\Omega_{i})}) $. 
\qquad
\end{proof}
%
%
%
%
%
%
\section{Nonconforming time discretizations and projections in time}\label{Sec:Time}
One of the main \JR{advantages of} Method~1 or Method~2 is that these methods are global in time and thus enable
the use of independent time discretizations in the subdomains.
\JR{At the space-time interface, data is transferred from one space-time subdomain to a neighboring subdomain}
by using a suitable projection. 

We consider semi-discrete problems in time with nonconforming time grids. Let $ \iT_{1} $ and $ \iT_{2} $
be two \MK{possibly} different partitions of the time interval $ (0,T) $ into sub-intervals (see Figure \ref{Fig:Time}).
We denote by $ J_{m}^{i} $ the time interval $ (t_{m-1}^{i}, t_{m}^{i}] $ and by
$ \Delta t^{i}_{m} := (t_{m}^{i} - t_{m-1}^{i}) $ for $ m=1, \hdots, M_{i} $ and $ i=1,2 $,
  \JR{where for simplicity of exposition we have again supposed that we have only two subdomains}.
We use the lowest order discontinuous Galerkin method \cite{BlayoHJ, OSWRDG, thomee1997galerkin},
which is a modified backward Euler method. The same idea can be generalized to \JR{higher order methods}. 
\begin{figure}[h]
\vspace{1.2cm}
\centering
\begin{minipage}[c]{0.5 \linewidth}
\setlength{\unitlength}{1pt} 
\begin{picture}(140,70)(0,0)
\thicklines
\put(0,3){\includegraphics[scale=0.55]{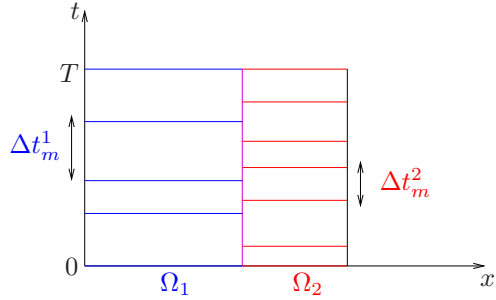} \\}
\put(-1,1){$ 0 $}
\put(-3,74){$ T $}
\put(35,-5){\textcolor{blue}{$ \Omega_{1} $}}
\put(85,-5){\textcolor{red}{$ \Omega_{2}$}}
\put(-22,47){\textcolor{blue}{$\Delta t^{1}_{m} $}}
\put(118,33){\textcolor{red}{$ \Delta t^{2}_{m} $}}
\put(156,-3){$ x $}
\put(1,98){$ t $}
\end{picture}
\end{minipage}
\caption{Nonconforming time grids in the subdomains.}
\label{Fig:Time} \vspace{-0.4cm}
\end{figure}
\noindent We denote by $ P_{0}(\mathcal{T}_{i}, W) $ the space of piecewise constant functions in time on grid
$ \mathcal{T}_{i} $ with values in $ W $, where $ W = H^{\frac{1}{2}}(\Gamma) $ for Method~1 and
$ W = L^{2}(\Gamma) $ for Method~2: \vspace{-0.1cm}
\CJ{
$$
P_{0}(\mathcal{T}_{i}, W) = \left \{ \phi: (0,T) \rightarrow W,
\phi \text{ is \JR{constant on} } J_{m}^{i}, \ \forall m=1, \dots, M_{i} \right \}. \vspace{-0.1cm}
$$
}
In order to exchange data on the space-time interface between different time grids, we define the following
$ L^{2} $ projection $ \Pi_{ji} $ from  $ P_{0} (\mathcal{T}_{i}, W) $ onto $ P_{0}(\mathcal{T}_{j},W) $
(see \cite{OSWRwave, OSWRDG})~: for $ \phi \in P_{0} (\mathcal{T}_{i}, W)$,
$ \Pi_{ji} \phi \hspace{-2pt} \mid_{J^{j}_{m}} $ is the average value of $ \phi $ on $ J^{j}_{m} $,
for $ m=1, \dots, M_{j} $: 
\begin{equation*}
\Pi_{ji} \left ( \phi \right )\mid_{J_{m}^{j}}
  = \frac{1}{\mid J^{j}_{m}\mid} \sum_{l=1}^{M_{i}} \int_{J^{j}_{m} \cap J^{i}_{l}} \phi. 
\end{equation*}
We use the algorithm described in \cite{Projection1d} for effectively performing this projection.
With these tools, we are now able to weakly enforce the transmission conditions over the time intervals. \\
We still denote by $ (c_{i}, \br_{i}) $, for $ i=1,2 $, the solution of the \JR{problem semi-discrete in time
corresponding to problem~\eqref{Schursub-mixed} or~\eqref{Schwarzsub-mixed}.} 
\subsection{For Method~1} 
As there is only one unknown $ \lambda $ on the interface, we need to choose $ \lambda $ piecewise constant in time
on one grid, either $ \iT_{1} $ or $ \iT_{2} $. For instance, let $ \lambda \in P_{0} (\iT_{2}, H^{\frac{1}{2}} (\Gamma)) $
and take $ c_{2} = \Pi_{22} (\lambda) = \CJ{\text{Id}} (\lambda) $. The \JR{weak continuity}
of the concentration in time across the interface
is fulfilled by letting 
$$
c_{1} = \Pi_{12} (\lambda ) \in P_{0} (\iT_{1}, H^{\frac{1}{2}} (\Gamma)).
$$
The semi-discrete (nonconforming in time) counterpart of the flux continuity
\CJ{in the second equation of~\eqref{physicsTC}}
is weakly enforced by integrating it over each time interval $ J_{m}^{2} $ of grid $ \iT_{2} $ :
$\forall m=1,...,M_{ 2}$,
\begin{equation} \label{IPschurTime}
 \int _{\Gamma} \int_{J^{2}_{m}} \biggl(\Pi_{21} \bigl (\pmb{r}_{1} (\Pi_{12} (\lambda), f, c_{0}) \cdot \pmb{n}_{1} \bigr)+ \Pi_{22}\bigl (\pmb{r}_{2} (\Pi_{22}(\lambda), f, c_{0}) \cdot \pmb{n}_{2}\bigr) \biggr)\, dt = 0.
\end{equation}
\textit{Remark.}
\JR{Obviously one can choose $ \lambda $ to be constant in time on yet}
another grid (neither $ \iT_{1} $ nor $ \iT_{2} $),
\MK{and this can be useful}
in some applications (e.g. flow in porous media with fractures). 
\subsection{For Method~2}
In Method~2, there are two interface unknowns representing the Robin terms \CJa{from each subdomain}.
Thus we let $ \xi_{i} \in P_{0} (\iT_{i}, L^{2}(\Gamma)) $, for $ i=1,2 $. The semi-discrete in time counterpart
of~\eqref{RobinTCs} is weakly enforced \MK{as follows}:
\begin{equation} \label{IPschwarzTime}
\begin{array}{ll}
\int_{\Gamma} \int_{J^{1}_{m}} \biggl (\xi_{1} - \Pi_{12}
  \bigl (-\pmb{r}_{2} (\xi_{2}, f, c_{0}) \cdot \pmb{n}_{1} + \alpha_{1,2} c_{2} (\xi_{2}, f, c_{0})\bigr)\biggr)
  \, dt = 0,  & \forall m=1, \cdots, M_{1},\\
\int_{\Gamma} \int_{J^{2}_{m}} \biggl ( - \Pi_{21}
  \bigl (-\pmb{r}_{1} (\xi_{1}, f, c_{0}) \cdot \pmb{n}_{2} + \alpha_{2,1} c_{1} (\xi_{1}, f, c_{0})\bigr )+\xi_{2}\biggr)
  \, dt = 0, & \forall m=1, \cdots, M_{2},
\end{array} \vspace{-0.2cm}
\end{equation}
where  $\left (c_{i}(\xi_{i}, f, c_{0}), \br_{i} (\xi_{i}, f, c_{0}) \right ), \; i=1,2 $ is the solution
\JR{to~\eqref{Schwarzsub-mixed}}.\\\\
\textit{Remark.} For conforming time grids, the two schemes defined by applying GMRES for the
two interface problems~\eqref{IPschurTime},~\eqref{IPschwarzTime} respectively converge to the
same monodomain solution. In \MK{the} nonconforming case, due to different projections,
the two schemes become different and in the next section, we will study and
compare the errors in time for the two approaches. 
%
%
%
%
\section{Numerical results}\label{Sec:Num}
In this section, we \MK{carry out}  numerical experiments in 2D to illustrate the performance of the two methods
presented above. We consider $ \bD = d \pmb{I} $ isotropic and \JR{constant on each subdomain}, where $ \pmb{I} $
is the 2D identity matrix. Consequently, we \JR{may} denote by $ d_{i} $, the diffusion coefficient in the subdomains.
For the spatial discretization, we use mixed finite elements  with the lowest order Raviart-Thomas spaces
on rectangles \cite{brezzi1991mixed, RobertsThomas}. \vspace{3pt}

In the first test problem (see Section~\ref{Subsec:test1}), we consider the two subdomain case with discontinuous
coefficients. We vary the jumps in the diffusion coefficients and we see how it affects the convergence speed.
We also \MK{analyze the behavior of the error} versus the time steps in the nonconforming case.
In the second test problem (see Section~\ref{Subsec:test2}), suggested by ANDRA as a first \JR{step} towards repository
simulations, we consider several subdomains. We observe how \JR{both} methods handle this application with the strong
heterogeneity and long time computations. 
%
%
%
%
\subsection{\JR{A two} subdomain case}\label{Subsec:test1}
The computational domain $ \Omega$ is the unit square,
and the final time is $ T=1 $.  We split $ \Omega $ into two nonoverlapping subdomains
$ \Omega_{1} = (0,0.5) \times (0,1) $ and $ \Omega_{2} = (0.5,1) \times (0,1) $ as depicted
in Figure~\ref{Fig:Test1domain}.
\begin{figure}[h]
\vspace{0.5cm}
\centering
\begin{minipage}[c]{0.4\linewidth}
\setlength{\unitlength}{1pt} 
\begin{picture}(140,70)(0,0)
\thicklines
\put(40,-30){\includegraphics[scale=0.7]{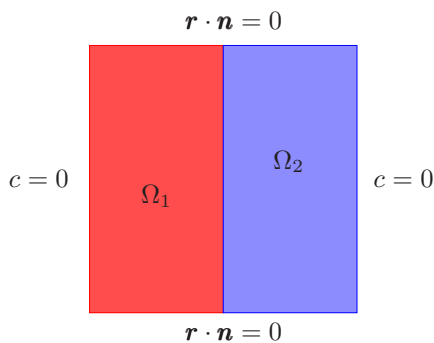} \\}
\put(60,12){$ \Omega_{1} $}
\put(110,25){$ \Omega_{2} $}
\put(10, 18){$ c=0 $}
\put(148, 18){$ c=0 $}
\put(77, -40){$ \br \cdot \bn =0 $}
\put(77, 78){$ \br \cdot \bn =0 $}
\end{picture}
\end{minipage} \vspace{1.3cm}\\
\caption{Domain decomposition and boundary conditions.}
\label{Fig:Test1domain} \vspace{-0.4cm}
\end{figure}
The initial condition is $ c_{0}=\exp\left ((x-0.55)^{2}+0.5(y-0.5)^{2}\right ) $ and the right-hand side is $ f=0 $.
The porosity is \MK{$ \omega _{1} = \omega_{2} =1 $}, the diffusion coefficients are $ d_{1}  $ and $ d_{2} $
in $ \Omega_{1} $ and $ \Omega_{2} $ respectively ($ d_{1} \neq d_{2} $). We fix $ d_{2} = 0.2 $ and vary $ d_{1} $
as shown in Table \ref{Tab:Test1Diff}. \JR{We let $ \ratioD $ denote the diffusion ratio $ d_{2}/d_{1} $.} For the spatial discretization, we use a uniform rectangular mesh
with size $ \Delta x_{1} = \Delta x_{2} = 1/200 $. For the time discretization, we use nonconforming time grids
with $ \Delta t_{1}$ and $\Delta t_{2} $, given in Table \ref{Tab:Test1Diff}, adapted to different
\JR{diffusion ratios}.
\begin{table}[h]
\centering
\begin{tabular}{|l|l|l|l|l|}
  \hline
	    		\JR{$\ratioD$}                      				& $ d_{1} $               					& $ 1/\Delta t_{1} $                    			 & $d_{2}$    & $ 1/\Delta t_{2} $   \\ \hline
	    		\textcolor{blue} {$10$}					& \textcolor{blue} {$ 0.02 $}      & \textcolor{blue}{$150$}                       & $ 0.2 $     & $ 200 $  \\ \hline 
	    		\textcolor{blue}{$100$}					& \textcolor{blue}{$ 0.002 $}      & \textcolor{blue}{$50$}                        & $ 0.2 $     & $ 200 $  \\ \hline 
	    		\textcolor{blue} {$1000$}				& \textcolor{blue}{$ 0.0002 $}      & \textcolor{blue} {$20$}                     & $ 0.2 $     & $ 200 $  \\ \hline 
\end{tabular}
\caption{Diffusion coefficients and corresponding nonconforming time steps.}
\label{Tab:Test1Diff} \vspace{-0.7cm}
\end{table}
We first analyze the convergence \MK{behavior} of \JR{each method}. We solve a problem
\JR{with~$ c_{0} = 0$ and $f = 0 $ (thus $c=0$ and $\br=0$).
We start with a random initial guess on the space-time interface.
We remark that one iteration of Method~1 with the preconditioner
costs twice as much as one iteration of Method~2 (in terms of number of subdomain solves).
Thus to compare \JR{the two} approaches, we plot the error (in logarithmic scale)
in the $ L^{2} (0,T; L^{2}(\Omega)) $-norm of the concentration $ c $ and the vector field $ \br $, 
versus the number of subdomain solves
(instead of versus the number of iterations). We stop the iteration when the errors (both in $ c $ and $ \br $) are less than $ 10^{-6} $.}
In Figure~\ref{Fig:Test1Convergence}, the convergence
of the two methods \JRo{(with GMRES)} for different diffusion ratios is shown. 
\begin{figure}[h]
\begin{flushleft}
\begin{minipage}[c]{0.30 \linewidth}
\begin{center}
\includegraphics[scale=0.19]{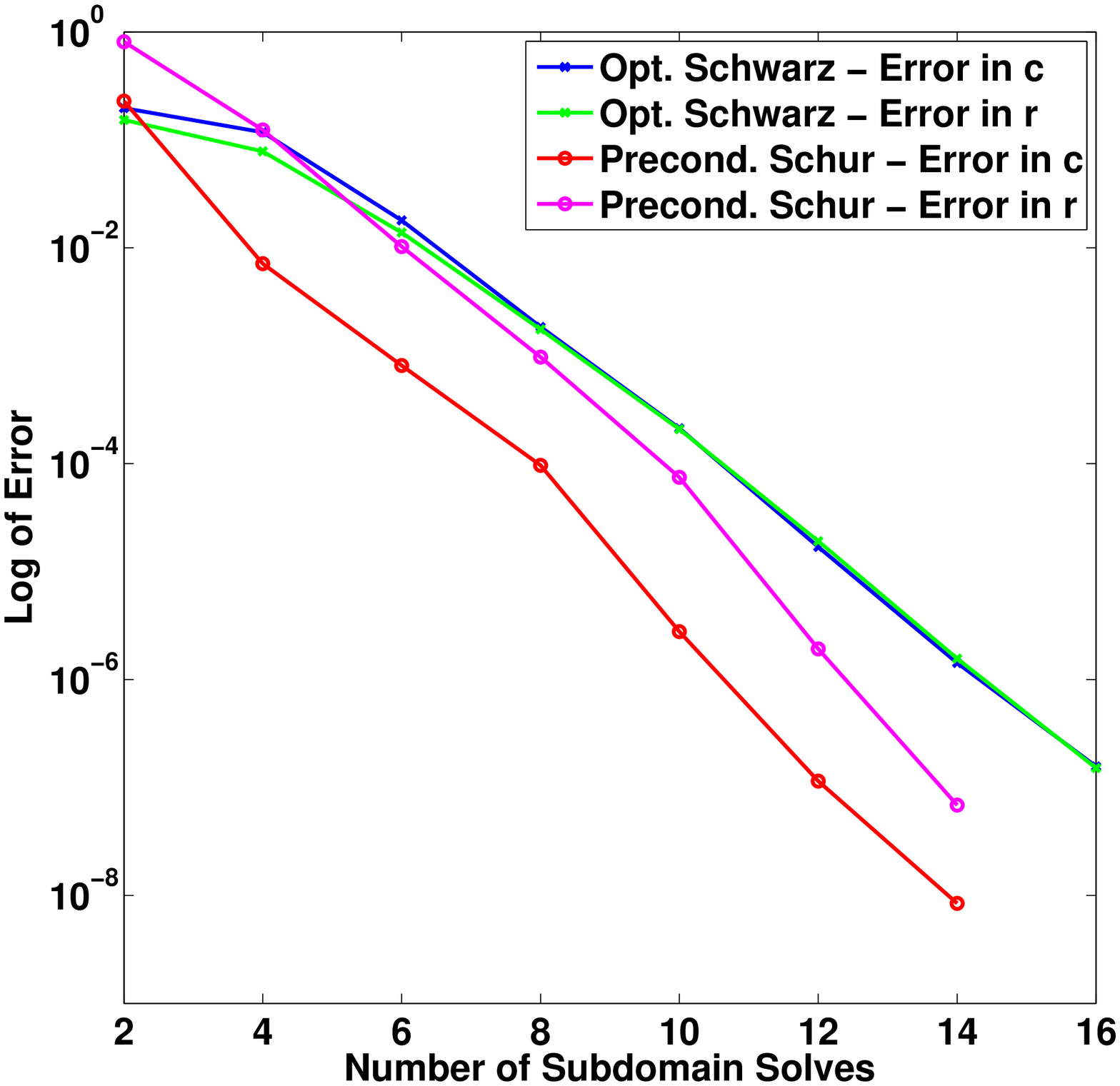}\\
\textcolor{blue}{\hspace{0.8cm} $\ratioD = 10 $}
\end{center}
\end{minipage} \hspace{5pt}
\begin{minipage}[c]{0.30 \linewidth}
\begin{center}
\includegraphics[scale=0.19]{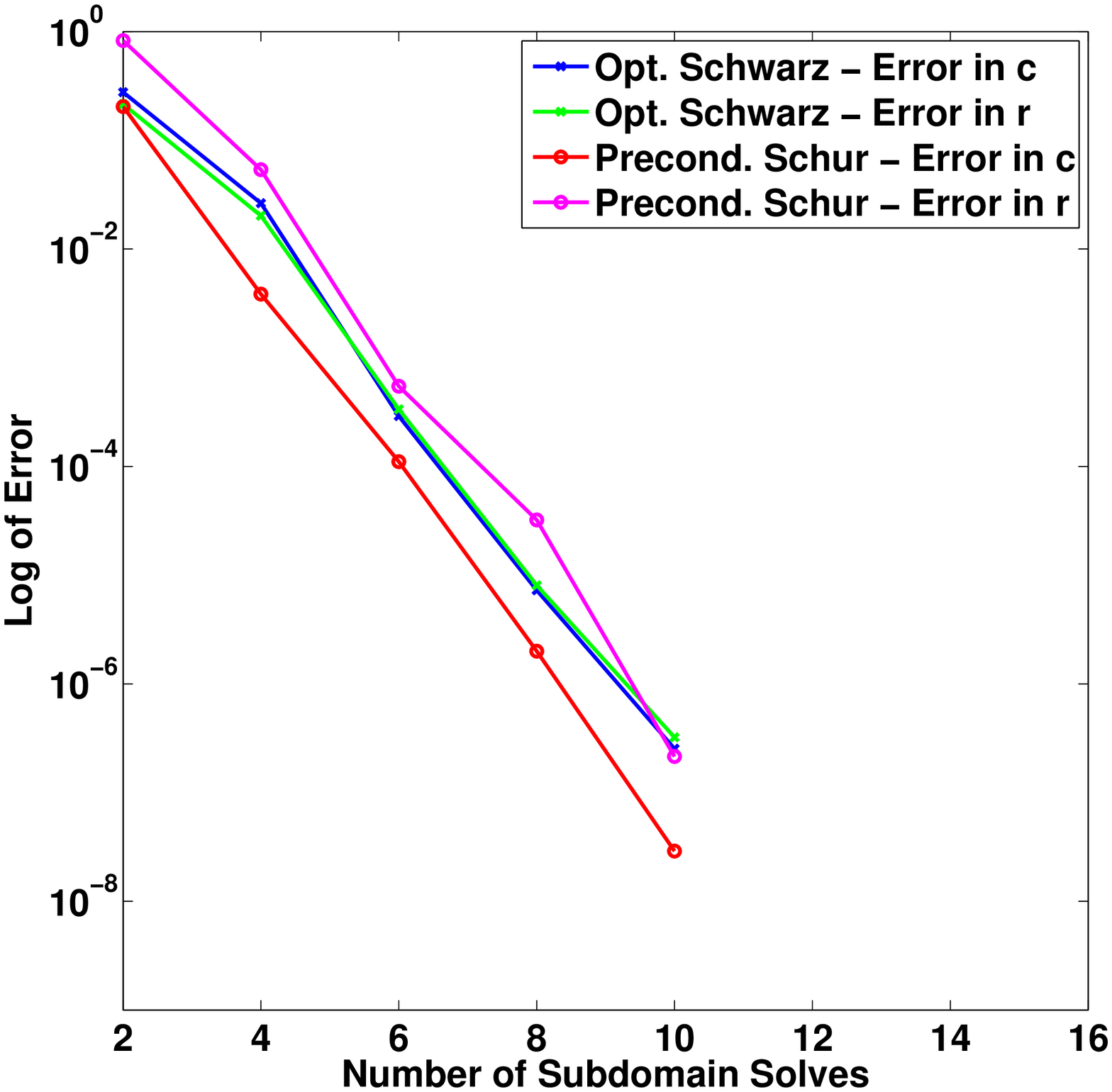}\\
\textcolor{blue}{\hspace{0.8cm} $\ratioD = 100 $}
\end{center}
\end{minipage} \hspace{5pt}
\begin{minipage}[c]{0.30 \linewidth}
\begin{center}
\includegraphics[scale=0.19]{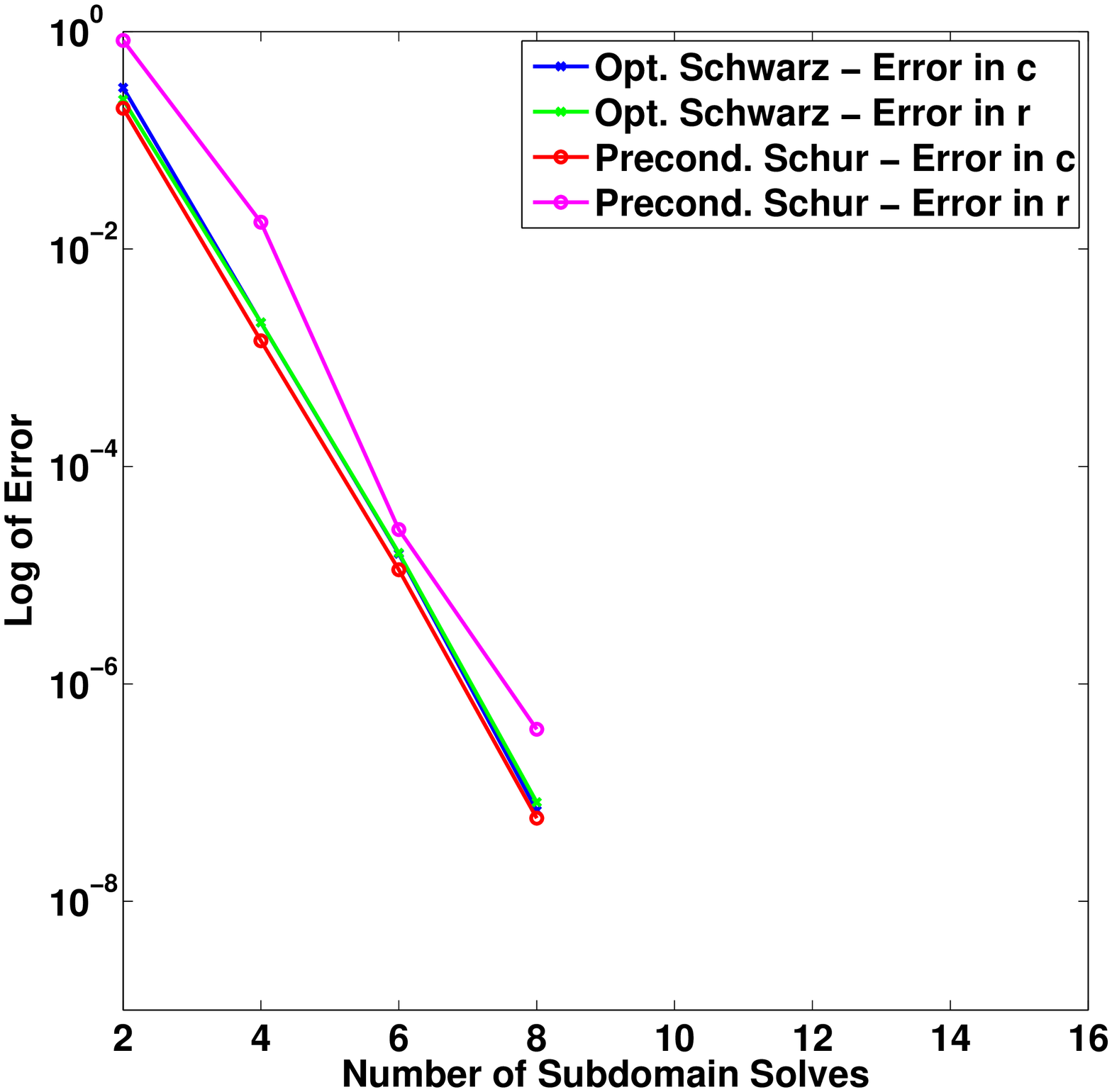}\\
\textcolor{blue}{\hspace{0.8cm} $\ratioD = 1000 $}
\end{center}
\end{minipage} 
\end{flushleft}
\vspace{-0.2cm}
\caption{Convergence curves for different \JR{diffusion ratios}: errors in $ c $ for Method~1 (red) and Method~2 (blue);
  errors in $ \br $ for Method~1 (magenta) and Method~2~(green).} 	
	\label{Fig:Test1Convergence} \vspace{-0.5cm}
\end{figure}
We see that both methods work well.
Method~1 (Schur) converges faster than Method~2 (Schwarz) for small \JR{diffusion ratios $ \ratioD $}. However, when \JR{$ \ratioD $} is increased,
they are comparable. 
We also observe that the errors in $ c $ and $ \br $ are nearly the same for Method~2 while the error in $ \br $
is greater than the error in $ c $ for Method~1. 
Both methods handle the heterogeneities efficiently. To obtain such a good performance, we have used the following
formula for calculating the weights in~\eqref{NNPrecondweight} (see~\cite{Mandelweights})
\begin{equation*}
\sigma_{i} = \left (\frac{d_{i}}{d_{1}+d_{2}}\right )^{2}, \quad i=1,2.
\end{equation*}
Consider now the case with \JR{$ \ratioD = 10 $}. For Method~2, we vary Robin parameters $ \alpha_{1,2} $ and $ \alpha_{2,1} $
and plot the logarithmic scale of the residual after $ 20 $ Jacobi iterations in Figure \ref{Fig:Test1Robin}.
We see that the optimized Robin parameters (the red star), which are calculated by numerically minimizing
the convergence factor \cite{Bennequin, OSWR2d, OSWR1d2}, are located close to \JR{those} giving
the smallest residual after the same number of iterations.
\begin{figure}[h]
\vspace{-0.2cm}\centering
\begin{minipage}[c]{0.5 \linewidth}
\includegraphics[scale=0.3]{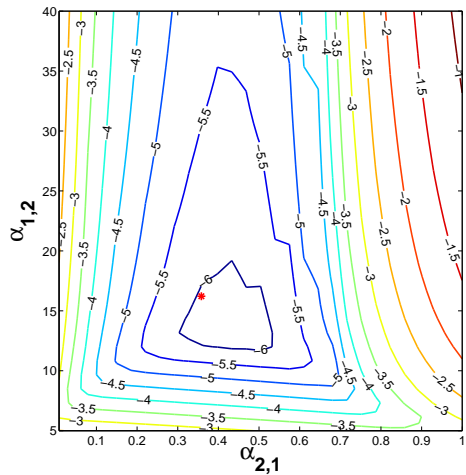}
\end{minipage} \vspace{-0.3cm}
\caption{Level curves for the residual (in logarithmic scale) after $ 20 $ Jacobi iterations for various values
  of the parameters $ \alpha_{1,2} $ and $ \alpha_{2,1} $. The red star shows the optimized parameters computed
  by numerically minimizing the continuous convergence factor. } 	
\label{Fig:Test1Robin} \vspace{-0.5cm}
\end{figure}

\MK{Next,} we analyze the accuracy in time for different \JR{diffusion ratios} and corresponding choices of
nonconforming time steps.
Toward this end, we consider four initial time grids (for $ \Delta t_{c} $ and $ \Delta t_{f} $ given)
\begin{itemize}
	\item Time grid 1 (fine-fine): conforming with $ \Delta t_{1} = \Delta t_{2} = \Delta t_{f} $.
	\item Time grid 2 (coarse-fine): nonconforming with $ \Delta t_{1} = \Delta t_{c} $ and
              $ \Delta t_{2} = \Delta t_{f} $.
	\item Time grid 3 (fine-coarse): nonconforming with $ \Delta t_{1} = \Delta t_{f} $ and
              $ \Delta t_{2} = \Delta t_{c} $.
	\item Time grid 4 (coarse-coarse): conforming with $ \Delta t_{1} = \Delta t_{2} = \Delta t_{c} $.
\end{itemize}
The time steps are then refined several times by a factor of 2. In space, we fix a conforming rectangular mesh
and we compute \MK{a reference solution by solving problem~\eqref{variational-mixed} directly
on a very fine time grid,} with $ \Delta t = \Delta t_{f}/ 2^{6} $.
The converged multidomain solution is such that the relative residual is smaller than $ 10^{-11} $. We show in Figures
\ref{Fig:Test1Contrast10} and \ref{Fig:Test1Contrast100} the errors in \JR{the} $ L^{2}(0,T; L^{2}(\Omega)) $-norms of
the concentration $ c $ and \JR{the} vector field $ \br $ versus the time step $ \Delta t = \max (\Delta t_{c}, \Delta t_{f}) $
for different \JR{diffusion ratios}. We only give the results \MK{for} Method~1 because the curves \JR{for Method~2} look exactly the same.
For \JR{$ \ratioD= 10 $}, we take $ \Delta t_{c} = 1/94 $ and $ \Delta t_{f} = 1/128 $; for \JR{$ \ratioD= 100 $}, we take
$ \Delta t_{c} = 1/40 $ and $ \Delta t_{f} = 1/160 $ (for \JR{$ \ratioD= 1000 $}, the same results hold for
$ \Delta t_{c} = 1/16 $ and $ \Delta t_{f} = 1/160 $ but we don't present it here). We first observe that \JR{first order convergence} is preserved in
the nonconforming case. Moreover, the error obtained in the nonconforming case (Time grid $ 2 $, in blue)
is nearly the same as in the finer conforming case (Time grid $ 1 $, in red). This means that nonconforming time grids
preserve the solution's accuracy in time and one must refine the time step where the solution varies \JR{most (i.e. where the diffusion coefficient is larger)}.
\begin{figure} [h]
\vspace{-0.3cm}
\centering
\begin{minipage}{0.45 \linewidth}
\includegraphics[scale=0.30]{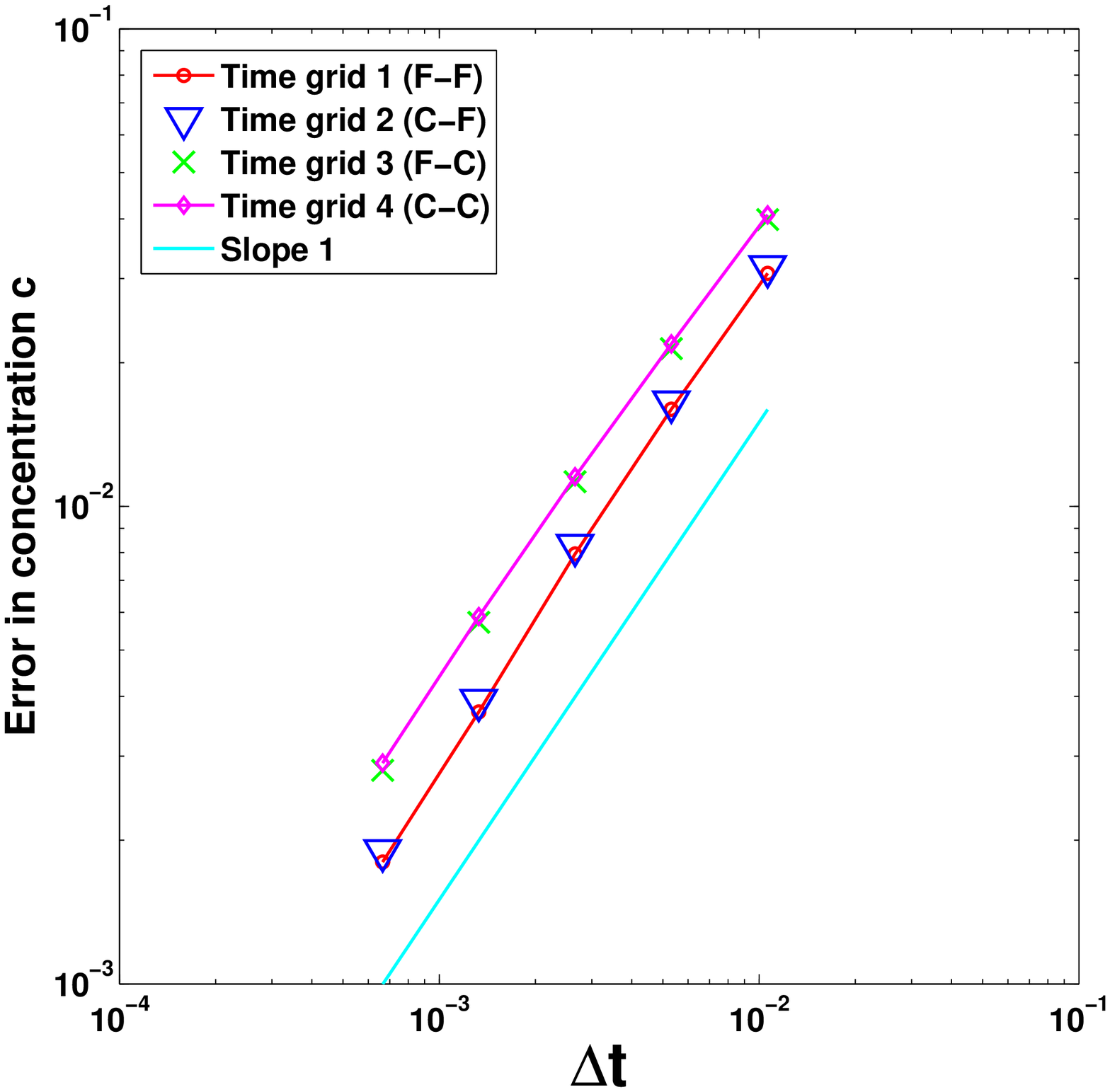} 
\end{minipage} \hspace{2pt}
\begin{minipage}{0.45 \linewidth}
\includegraphics[scale=0.30]{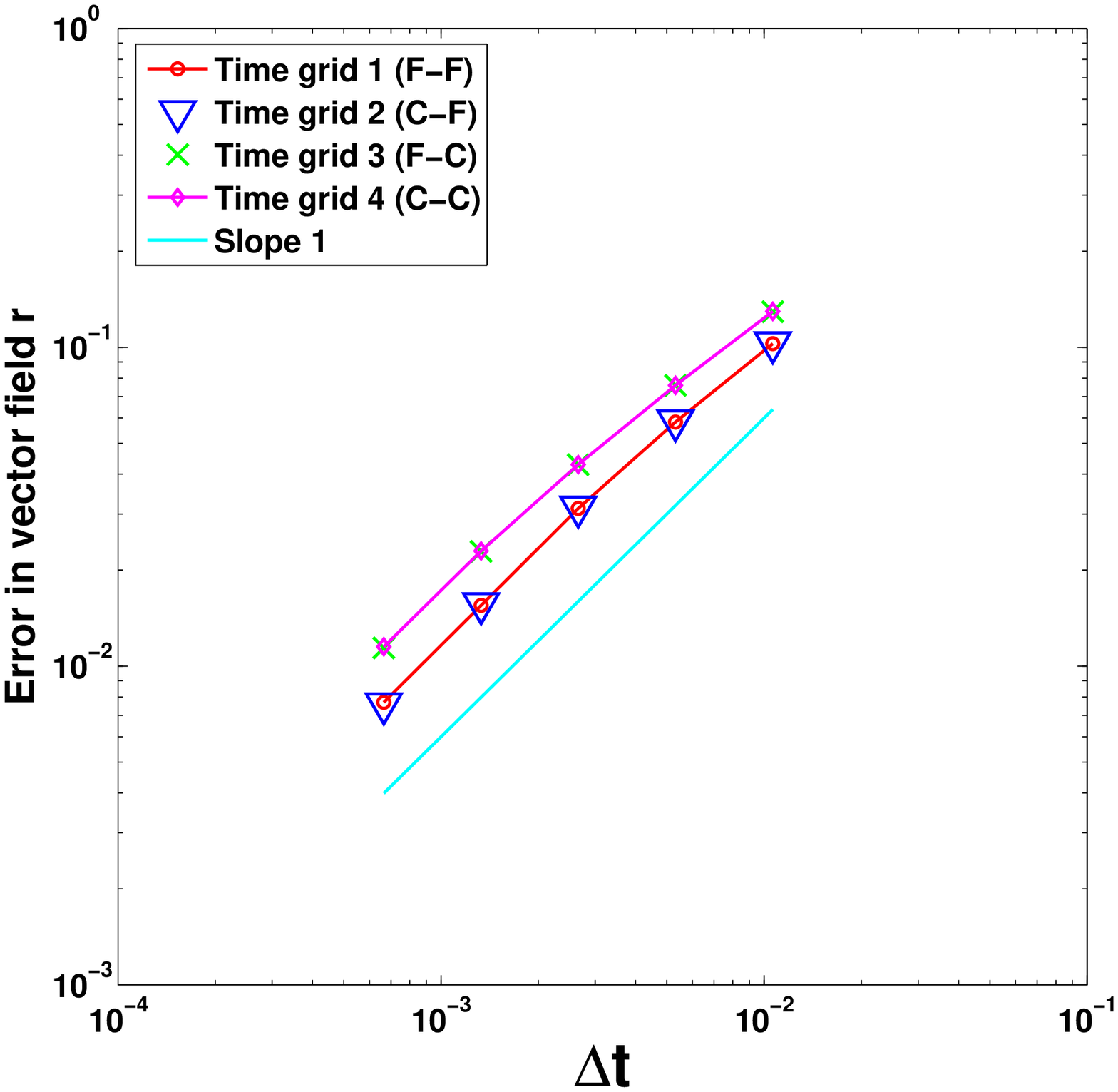} 
\end{minipage} 
\vspace{-0.2cm}
\caption{Errors in $ c $ (left) and $ \br $ (right) in logarithmic scales between \JR{the reference and the multidomain solutions} versus the time step for \JR{$\ratioD =10 $}.} 	
\label{Fig:Test1Contrast10} \vspace{-0.5cm}
\end{figure}
\begin{figure}[h]
\vspace{-0.4cm}
\centering
\begin{minipage}{0.45 \linewidth}
\includegraphics[scale=0.30]{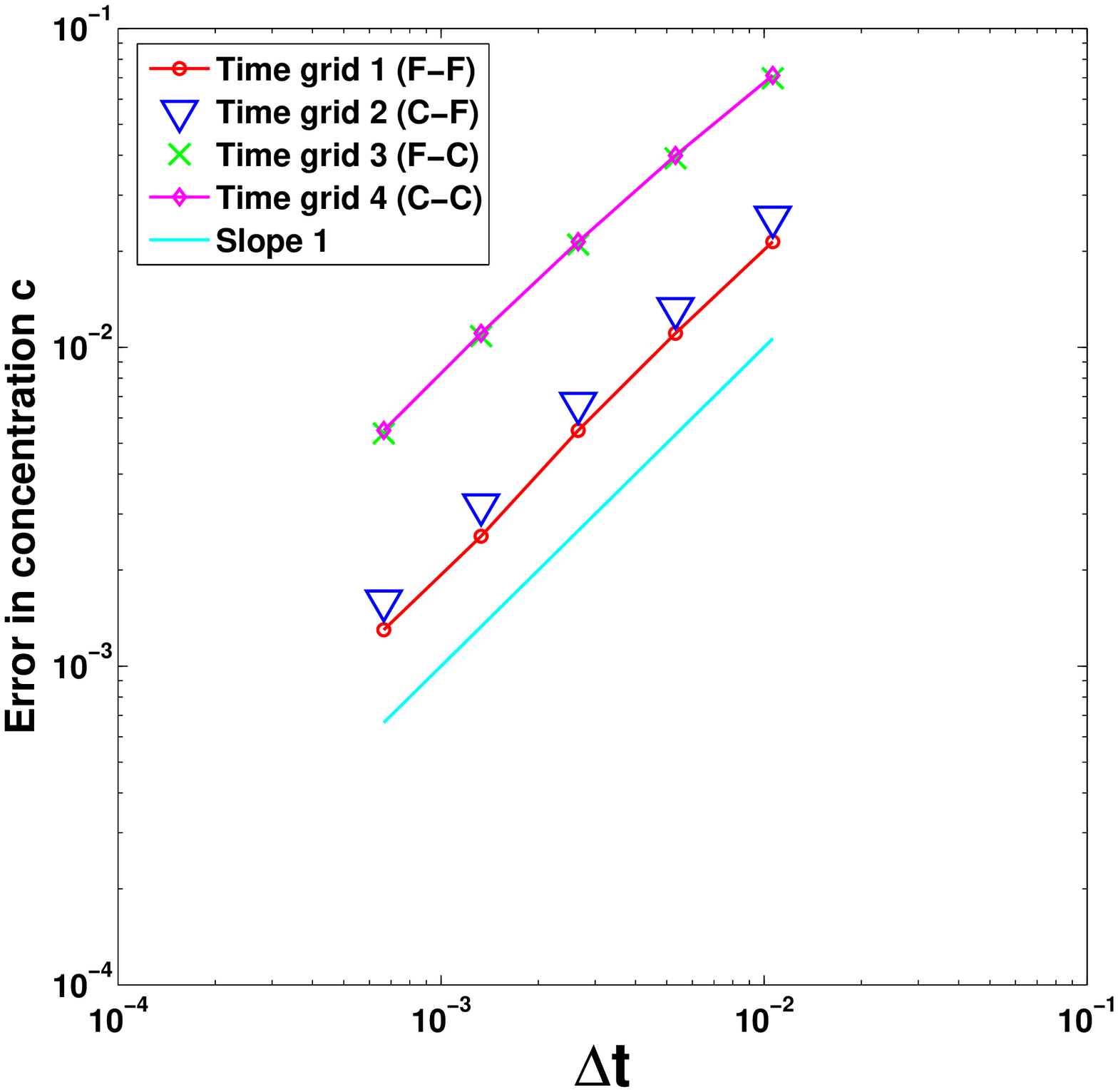} 
\end{minipage} \hspace{2pt}
\begin{minipage}{0.45 \linewidth}
\includegraphics[scale=0.30]{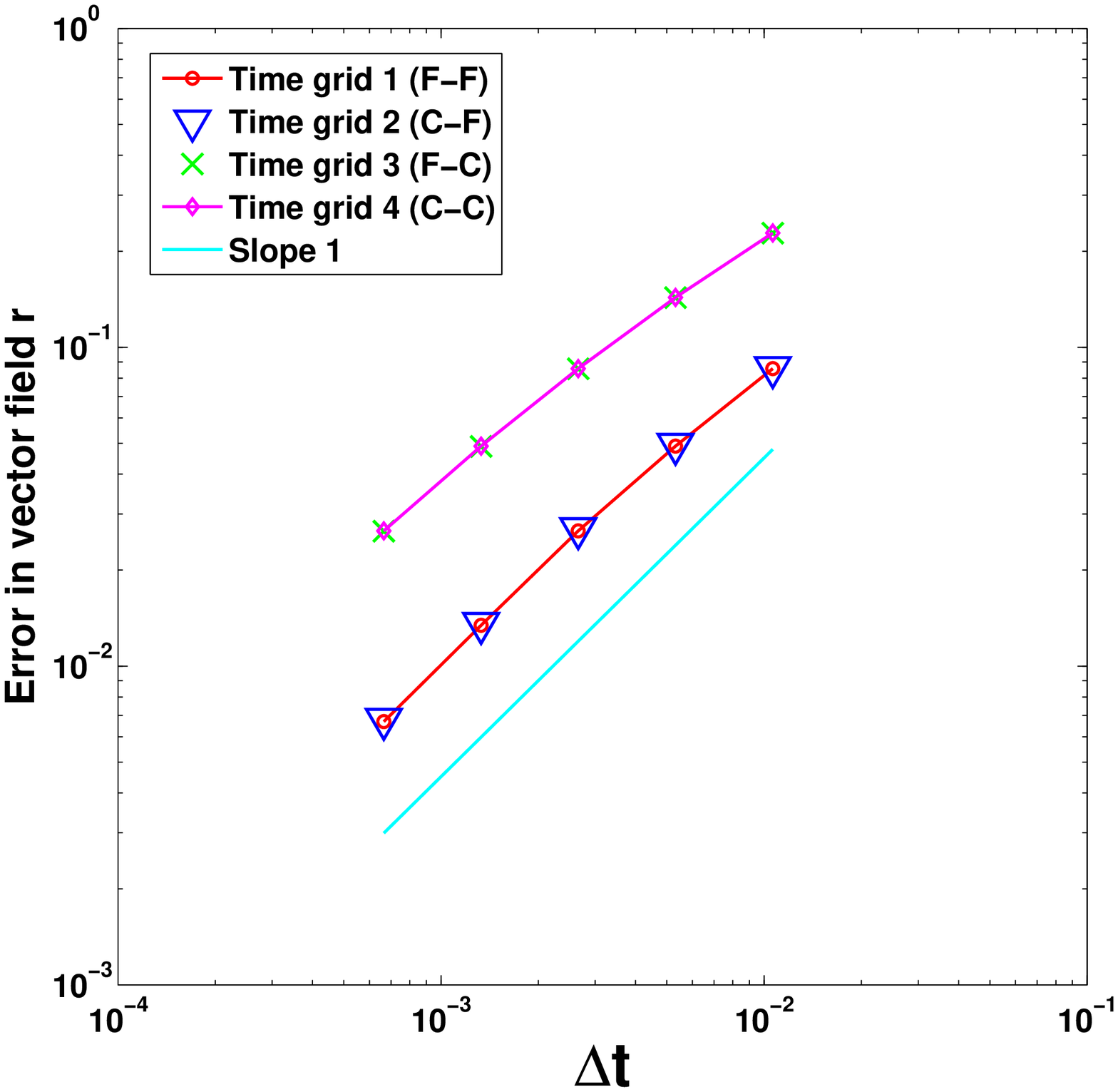} 
\end{minipage}  
\vspace{-0.2cm}
\caption{Errors in $ c $ (left) and $ \br $ (right)  in logarithmic scales between \JR{the reference and the multidomain solutions} versus the time step for \JR{$\ratioD =100 $}.} 	
\label{Fig:Test1Contrast100} \vspace{-0.7cm}
\end{figure}	
%
%
%
%
\subsection{\JR{A} porous medium test case}\label{Subsec:test2}
In this subsection, we consider a \JR{simplified version of a problem simulating contaminant transport in and around a nuclear waste repository site}.
The test case is described in Figure \ref{Fig:Test2domain}, where the repository is \JR{shown} in red and the clay layer
in yellow. \JR{The domain is a $ 3950 $m by $ 140 $m rectangle and the repository is a centrally located $ 2950 $m by $ 10 $m rectangle}. The initial condition is $ c_{0}=0 $, the source term is $ f=0 $ in the clay layer and 
\begin{equation}\label{sourcef} 
f = \left  \{ \begin{array}{ll} 10^{-5}  \; \text{s$ ^{-1} $} & \text{if} \; t \leq 10^5 \, \text{years},  \\
		0  & \text{if} \; t > 10^5 \, \text{years},
		\end{array} \right.  \quad \text{in the repository}.
\end{equation}
We impose homogeneous Dirichlet conditions on top and bottom, and homogeneous Neumann conditions on the left
and right hand sides. We decompose $ \Omega $ into $ 9 $ subdomains as depicted in Figure \ref{Fig:Test2Decom}
with $ \Omega_{5} $ representing the repository. The porosity is $ \omega_{5} = 0.2$
 and $ \omega _{i} = 0.05 $, $ i \neq 5 $. The diffusion coefficients are $ d_{5} = 2\, 10^{-9} $ m$ ^{2}/ $s
 and $ d_{i} = 5 \, 10^{-12} $ m$ ^{2}/ $s, $ i \neq 5 $. \JR{So the diffusion ratio is $ \ratioD=400 $}. 
\begin{figure}[h]
\centering
\includegraphics[scale=0.6]{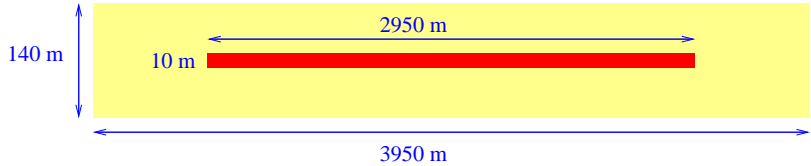}
\caption{Geometry of the domain.}
\label{Fig:Test2domain} \vspace{-0.7cm}
\end{figure}
~
\begin{figure}[h]
\centering
\begin{minipage}[c]{0.64\linewidth}
\setlength{\unitlength}{1pt} 
\begin{picture}(140,70)(0,0)
\thicklines
\put(0,-10){\includegraphics[scale=0.6]{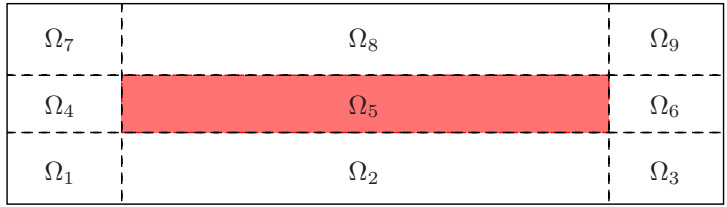} \\}
\put(15,0){$ \Omega_{1} $}
\put(130,0){$ \Omega_{2} $}
\put(244,0){$ \Omega_{3}$}
\put(15,25){$ \Omega_{4} $}
\put(130,25){$ \Omega_{5} $}
\put(244,25){$ \Omega_{6}$}
\put(15,50){$ \Omega_{7} $}
\put(130,50){$ \Omega_{8} $}
\put(244,50){$ \Omega_{9}$}
\end{picture}
\end{minipage} \vspace{0.3cm}
\caption{The decomposition into 9 subdomains (blow up in the y-direction).}
\label{Fig:Test2Decom} \vspace{-0.5cm}
\end{figure}
For the spatial discretization, we use a non-uniform but conforming rectangular mesh with a finer discretization
in the repository (a uniform mesh with $ 600 $ points in the $ x $ direction and $ 30 $ points in the $ y $ direction)
and a coarser discretization in the clay layer (the mesh size progressively increases with distance \JR{from} the repository
by a factor of $ 1.05 $). For the time discretization, we use nonconforming time grids with $ \Delta t_{5} = 2000 $
years and $ \Delta t_{i} = 10,000 $ years, $ i \neq 5 $. 
For this application, we are interested in the long-term behaviour of the repository, say \JR{over} one million years.
Thus, we test the performance of the two methods for a "short" time interval ($ T=200,000 $ years) and for
a longer time interval ($ T=1,000,000 $ years). The same time steps, $\Delta t_{i} $, are used for both cases.
As in the first test problem, we analyze the convergence results by solving a problem with $ f = 0 $. 
For Method~2, as we have a \JR{small, thin} object embedded in a large area, it has been shown in
\cite{OSWR3sub, OSWR3subDD20} that it is important to derive an adapted optimization for Robin parameters.
Thus, we consider two different optimization techniques: the classical one (Opt.~1) as used in the first test problem,
and an adapted version (Opt.~2) \cite{OSWR3sub, OSWR3subDD20} where we take into account the \JR{dimension} of the subdomains.

In Figure~\ref{Fig:test2ConvGMRES} we compare the errors
in the concentration $ c $ (on the left) and in the vector field $ \br $ (on the right) both over a shorter time interval (on top) and over a longer time interval (on bottom) \JRo{where GMRES is used in all cases as the iterative solver}: Method~1 (red), Method~2 with Opt.~1 (blue) and Method~2 with Opt.~2 (green). They are comparable and perform well in the case of multiple subdomains.
We also note that the longer the time interval, the larger the number of subdomain solves needed to converge
to a given tolerance (here $ 10^{-6} $). Thus, the use of time windows (see \cite{BlayoHJ, OSWRDG2}) could \JRo{considerably improve the performance of all the algorithms, especially with} an adapted choice of the initial guess on the interface based on the solution on the previous time window.
%
%
\begin{figure}[h]
\centering
\begin{minipage}{0.45 \linewidth}
\includegraphics[scale=0.25]{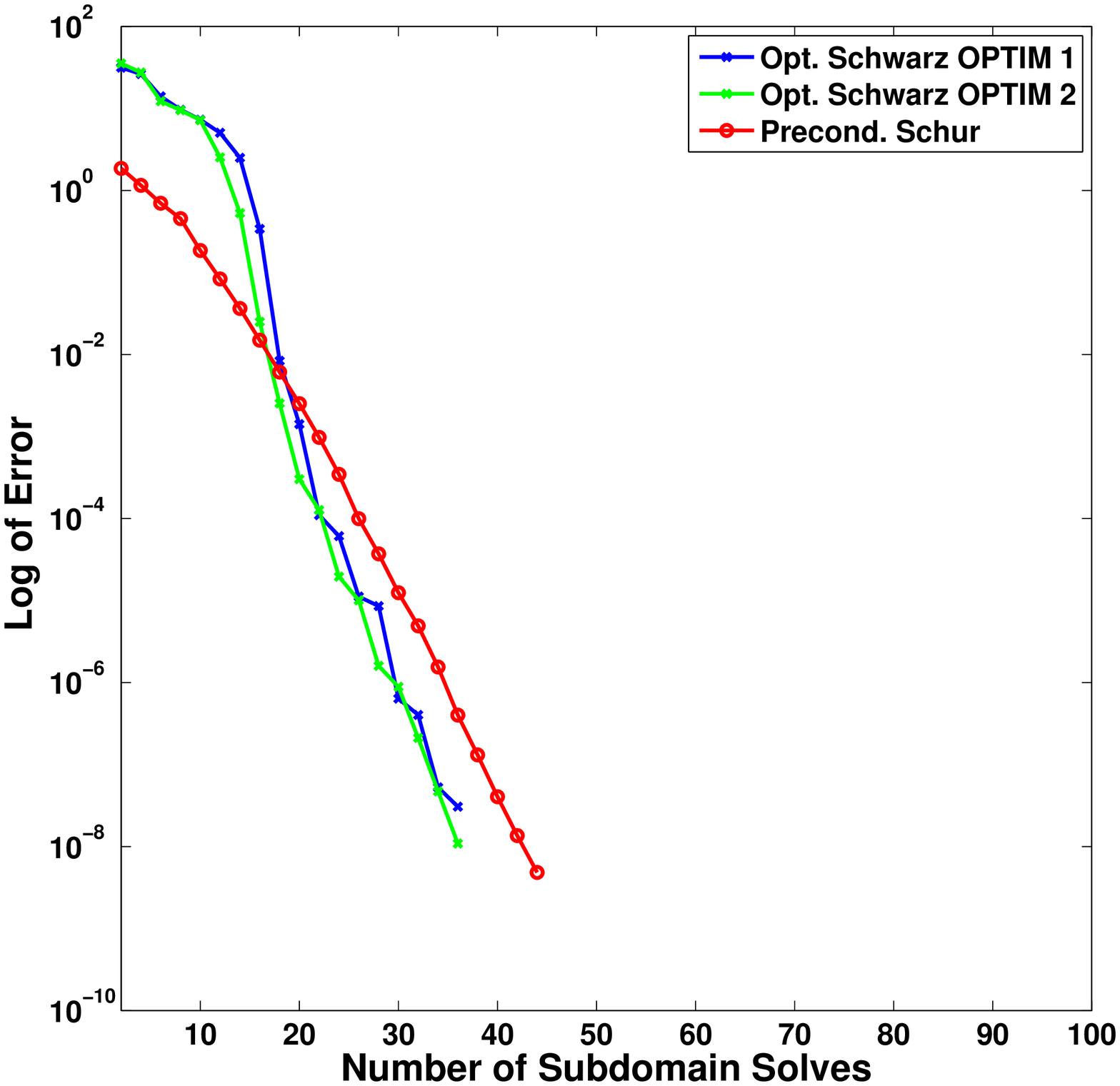} \\
\includegraphics[scale=0.25]{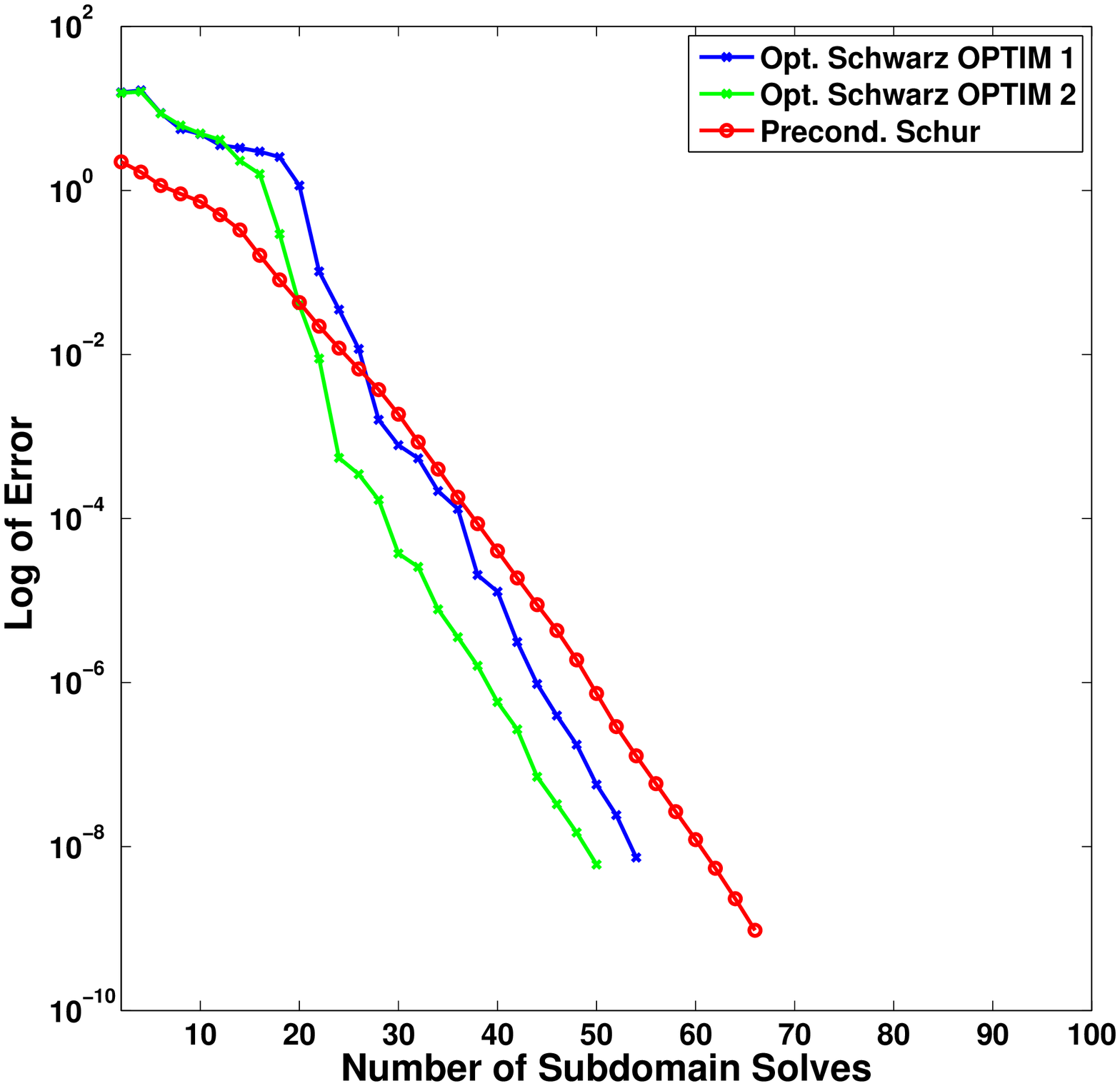} 
\end{minipage} \hspace{2pt}
\begin{minipage}{0.45 \linewidth}
\includegraphics[scale=0.25]{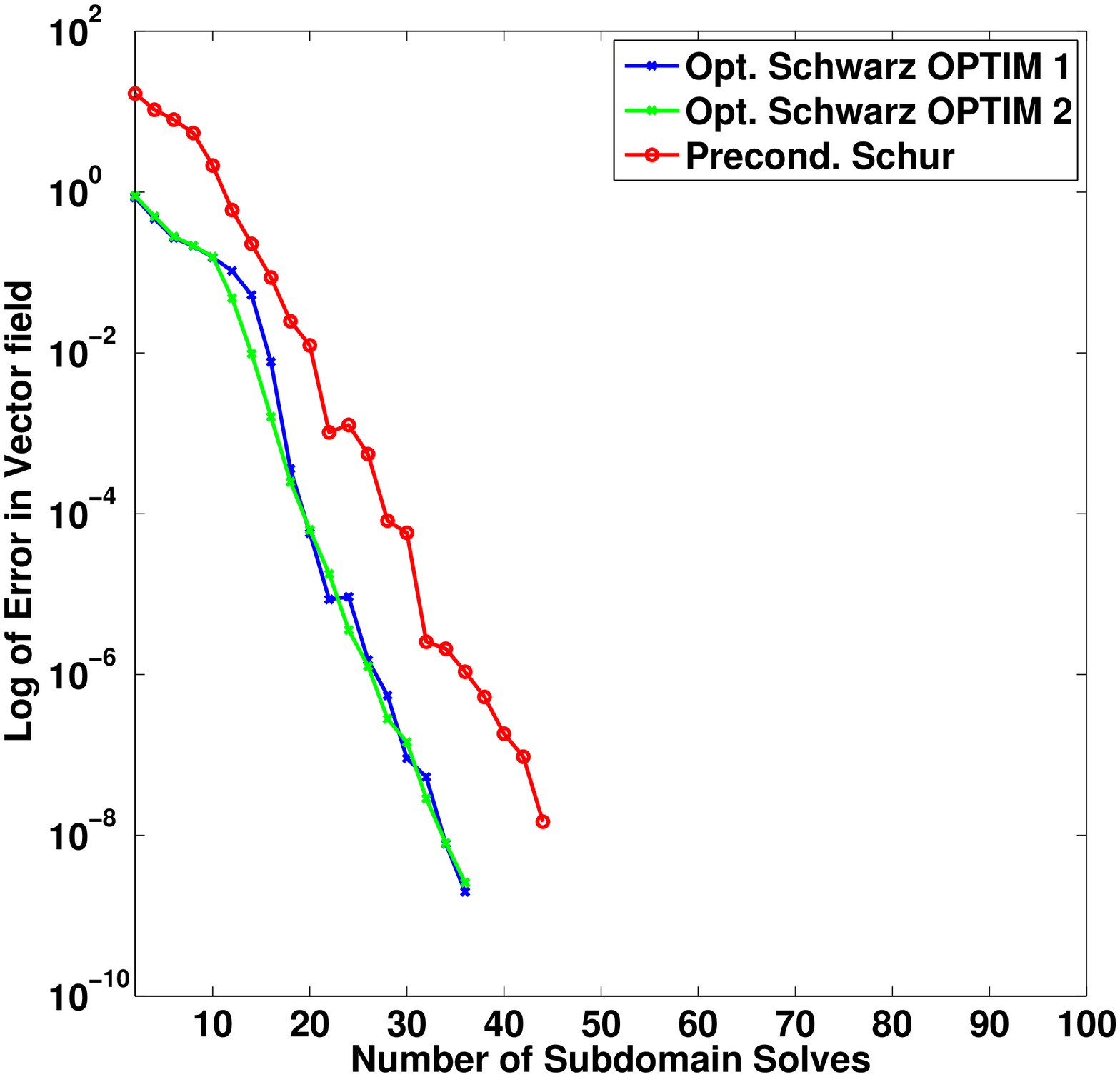} \\
\includegraphics[scale=0.25]{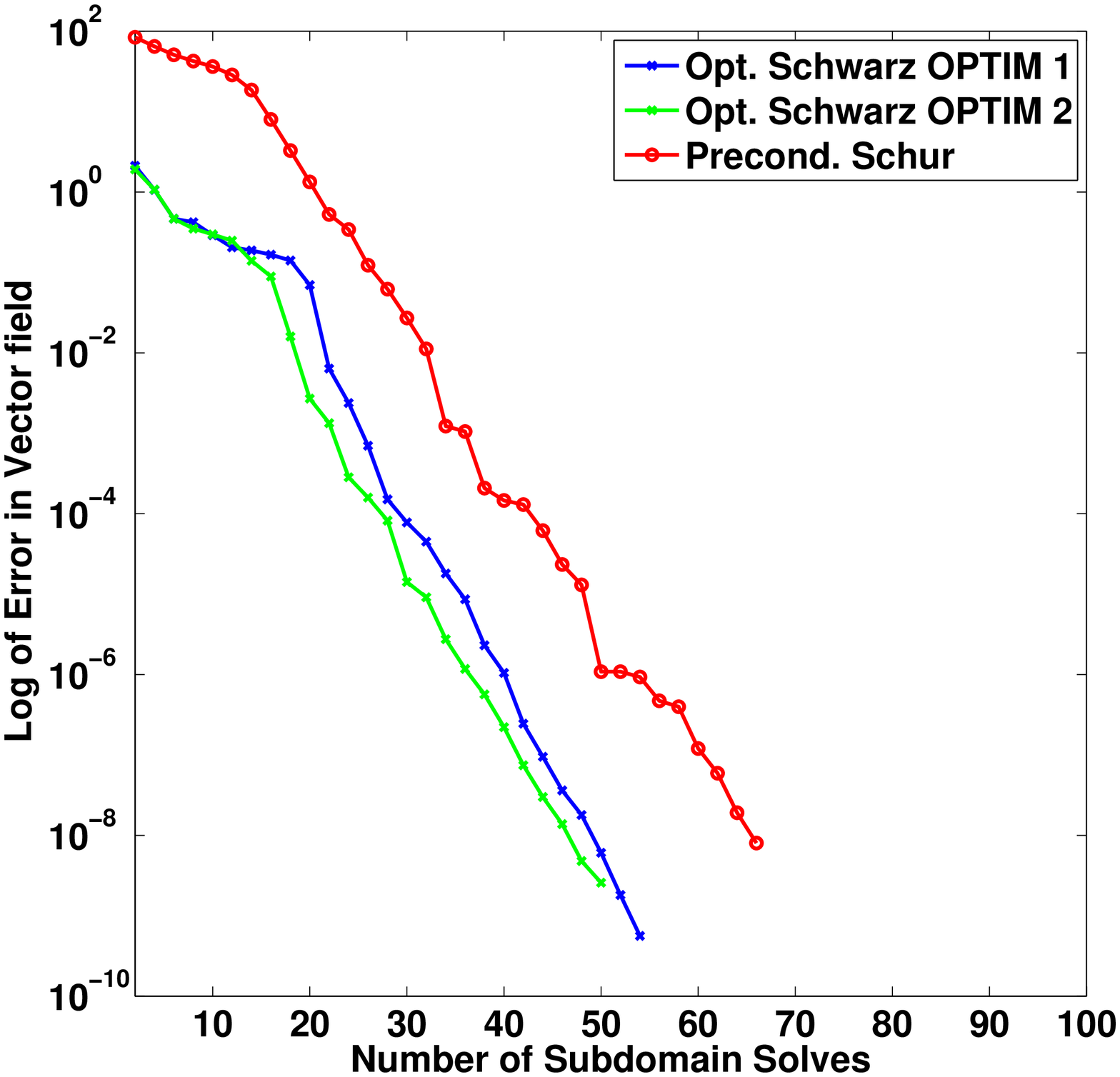} 
\end{minipage} 
\caption{Convergence curves for different time intervals with GMRES: error in $ c $ (on the left) and error in $ \br $ (on the right), for short time $T = 200,000$ years (on top) and for long time $ T = 1,000,000 $ years (on bottom).} 	
\label{Fig:test2ConvGMRES} \vspace{-0.7cm}
\end{figure} 
In Figure~\ref{Fig:test2ConvJacobi}, we plot the errors in the concentration $ c $ over different time intervals for Method~2 \JRo{with} Jacobi iteration: with Opt.~1 (blue) and Opt.~2 (green) (the errors in the vector field $ \br $ behave similarly). We observe that Opt.~2 \JRo{efficiently} handles the long time computation case while Opt.~1 doesn't.
\begin{figure}[h]
\vspace{-0.3cm}
\centering
\begin{minipage}{0.45 \linewidth}
\includegraphics[scale=0.25]{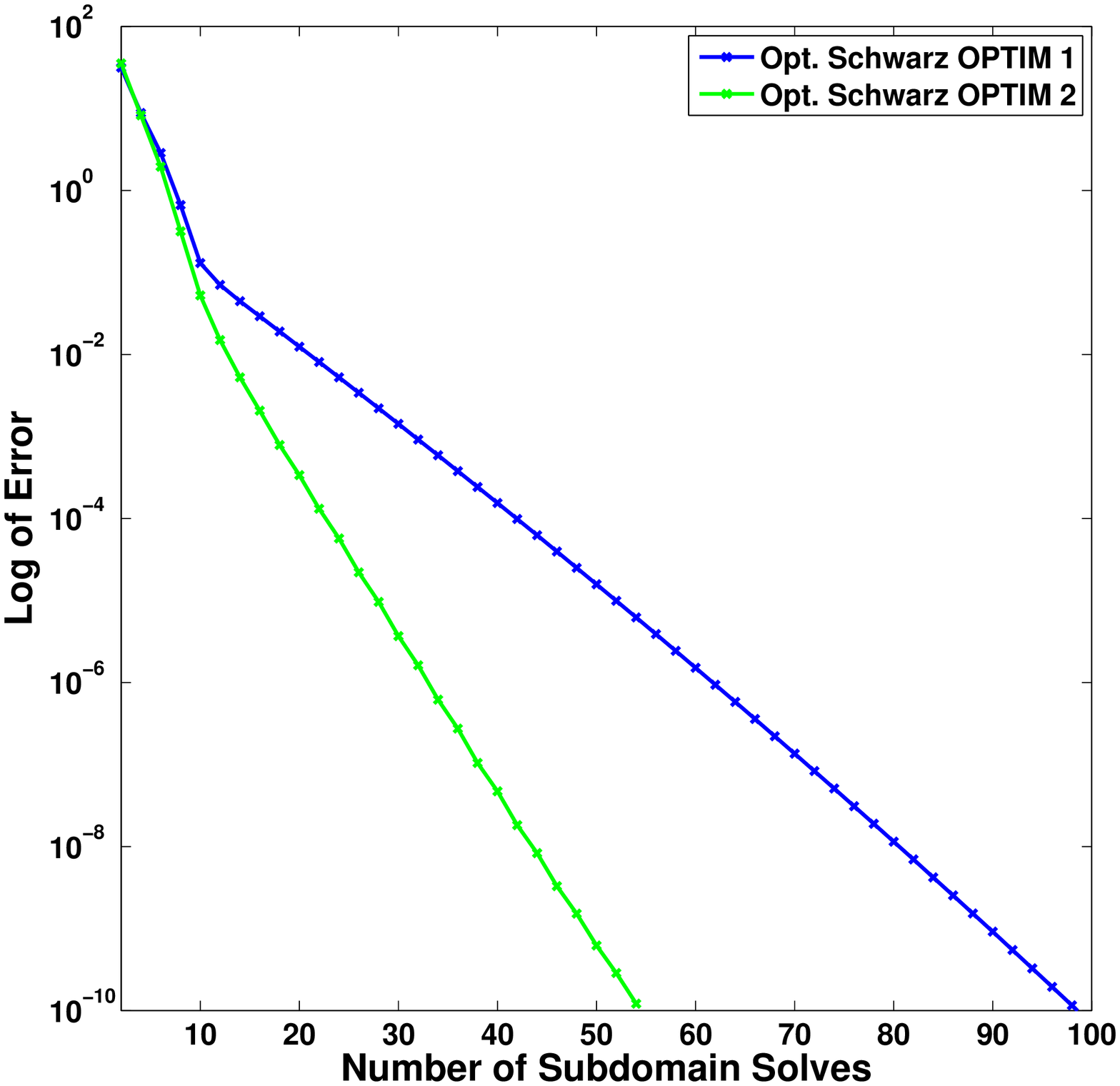}
\end{minipage} \hspace{2pt}
\begin{minipage}{0.45 \linewidth}
\includegraphics[scale=0.25]{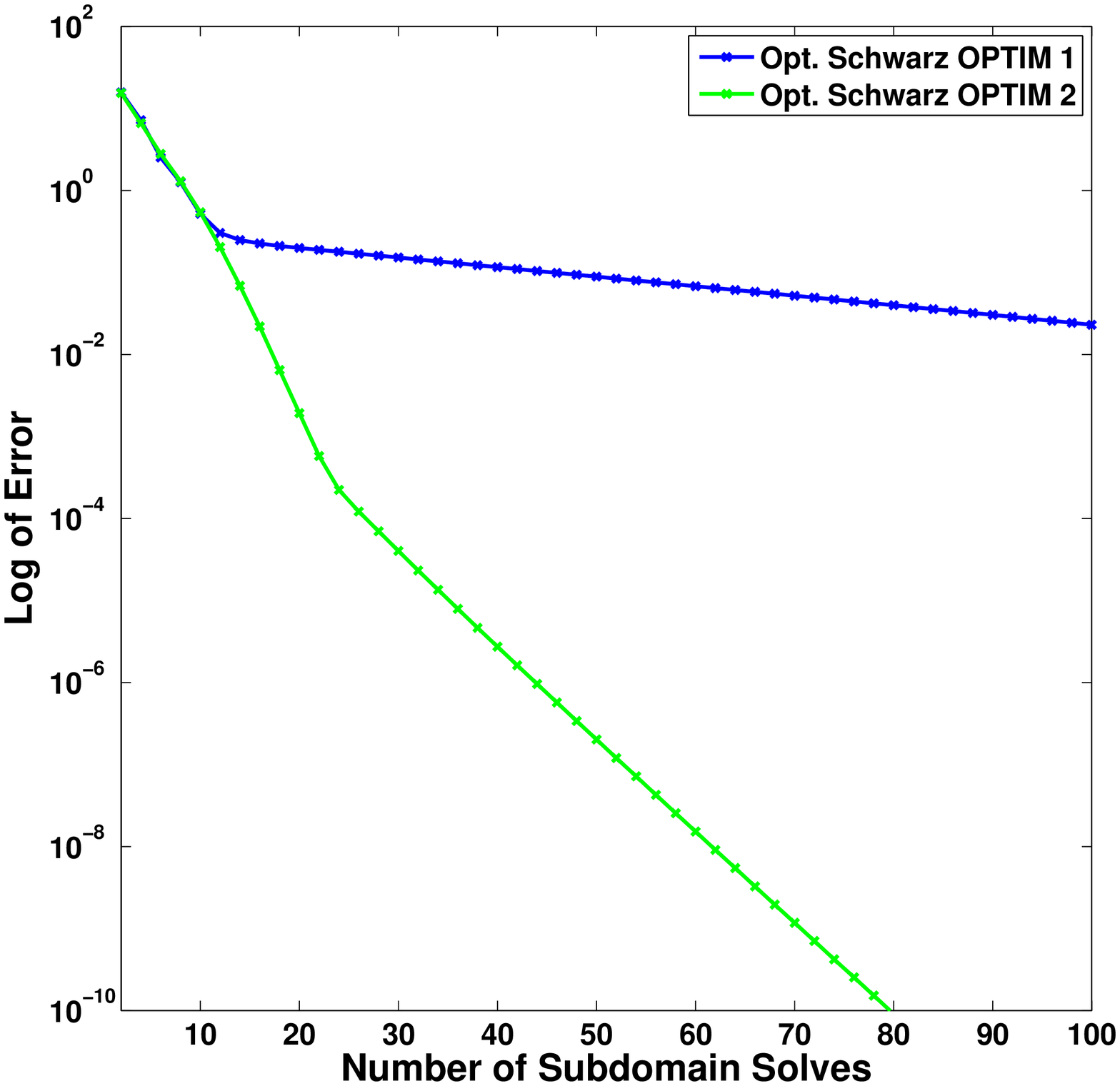} 
\end{minipage} 
\caption{Convergence curves for different time intervals using Jacobi iteration: for short time $T = 200,000$ years (on the left) and for long time $ T = 1,000,000 $ years (on the right).} 	
\label{Fig:test2ConvJacobi} \vspace{-0.5cm}
\end{figure}

Next we consider the case with $ f \neq 0 $ as defined in~\eqref{sourcef} and over the long time interval,
$ T = 1,000,000 $ years. The discretizations in space and in time (nonconforming) are the same as above.
We verify the performance of Method~1 and Method~2 (with Opt.~2) using GMRES and zero initial guess on the space-time
interfaces. The tolerance of the iteration is $ 10^{-6} $. In Figure~\ref{Fig:Sol}, the evolution of the solution at
different times is depicted (both methods give similar results).
As time goes on and under the effect of diffusion, the contaminant slowly migrates from the repository
to the surrounding area. Moreover, its concentration $ c $ increases until injection stops
(i.e. after 100,000 years) and then decreases. 
\begin{figure}[h]
\vspace{-0.1cm}
\centering
\begin{minipage}{2 \linewidth}
\hspace{-0.5cm}
%
\includegraphics[width=0.53\textwidth]{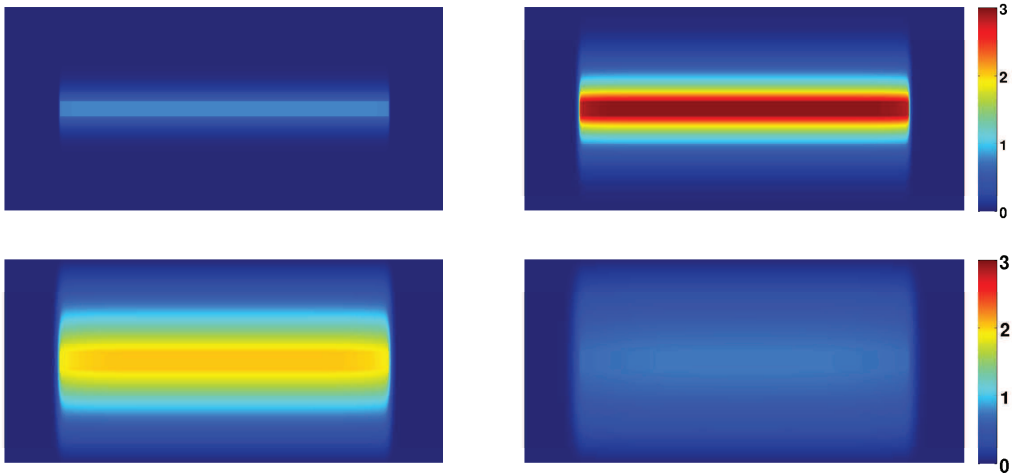}
\end{minipage}
\caption{\CJ{Snapshots of the multi-domain solution after 20,000 years (top left), 100 000 years (top right),
    200 000 years (bottom left), and 1,000,000 years (bottom right), with a
         blow up in the y-direction.}} 	
\label{Fig:Sol} \vspace{-0.8cm}
\end{figure}
In Figure~\ref{Fig:test2relres} the relative residuals \MK{for}~each method versus the number of subdomain solves
are shown, as the monodomain solution with nonconforming grids is unknown. Both methods work well and we observe
that \JR{Method 1 converges linearly while Method 2 initially converges extremely rapidly, the convergence becoming linear after the first few iterations}.
\begin{figure}[h]
\vspace{-0.3cm}
\centering
\begin{minipage}{0.45 \linewidth}
\includegraphics[scale=0.27]{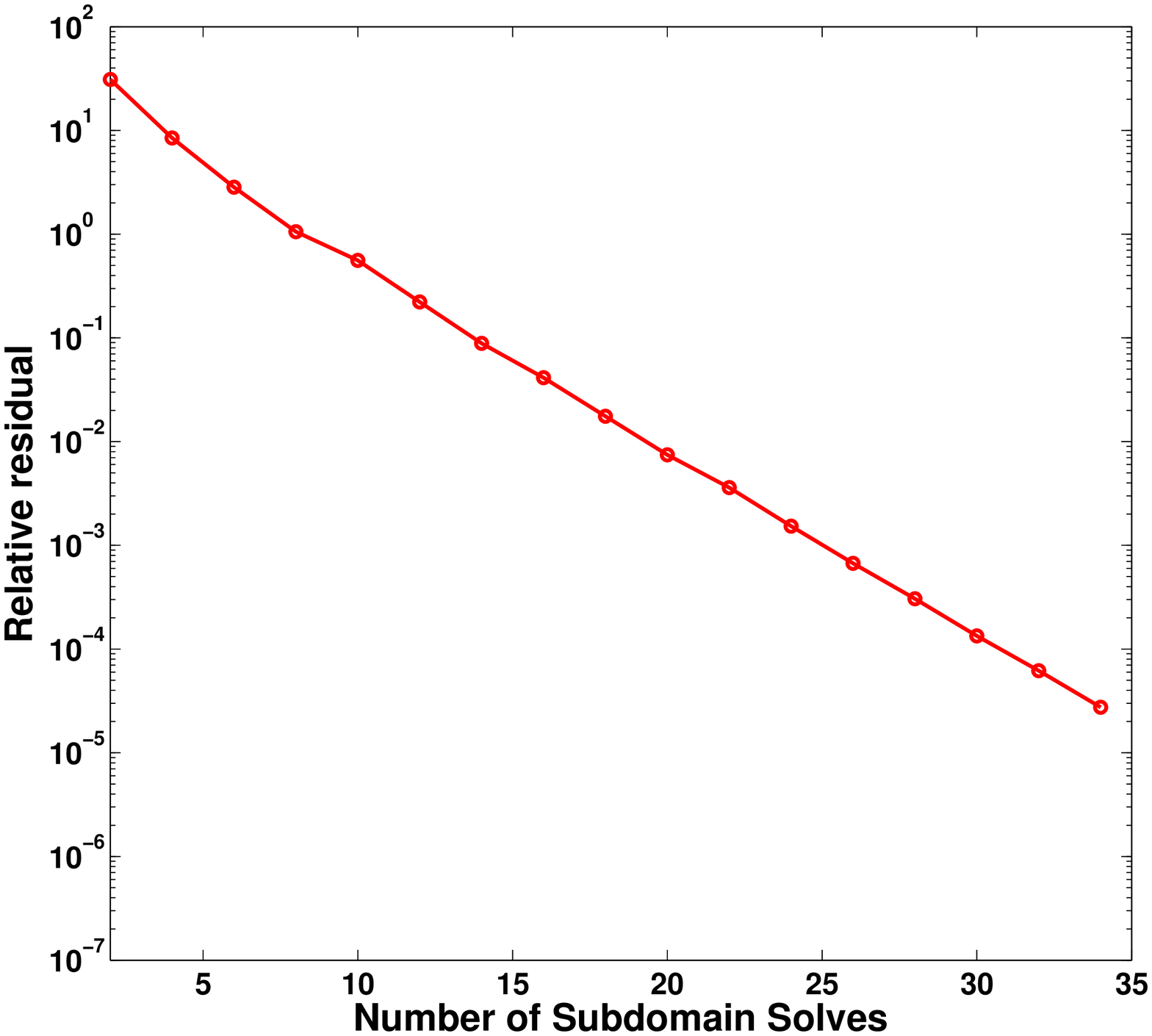} 
\end{minipage} \hspace{10pt}
\begin{minipage}{0.45 \linewidth}
\includegraphics[scale=0.27]{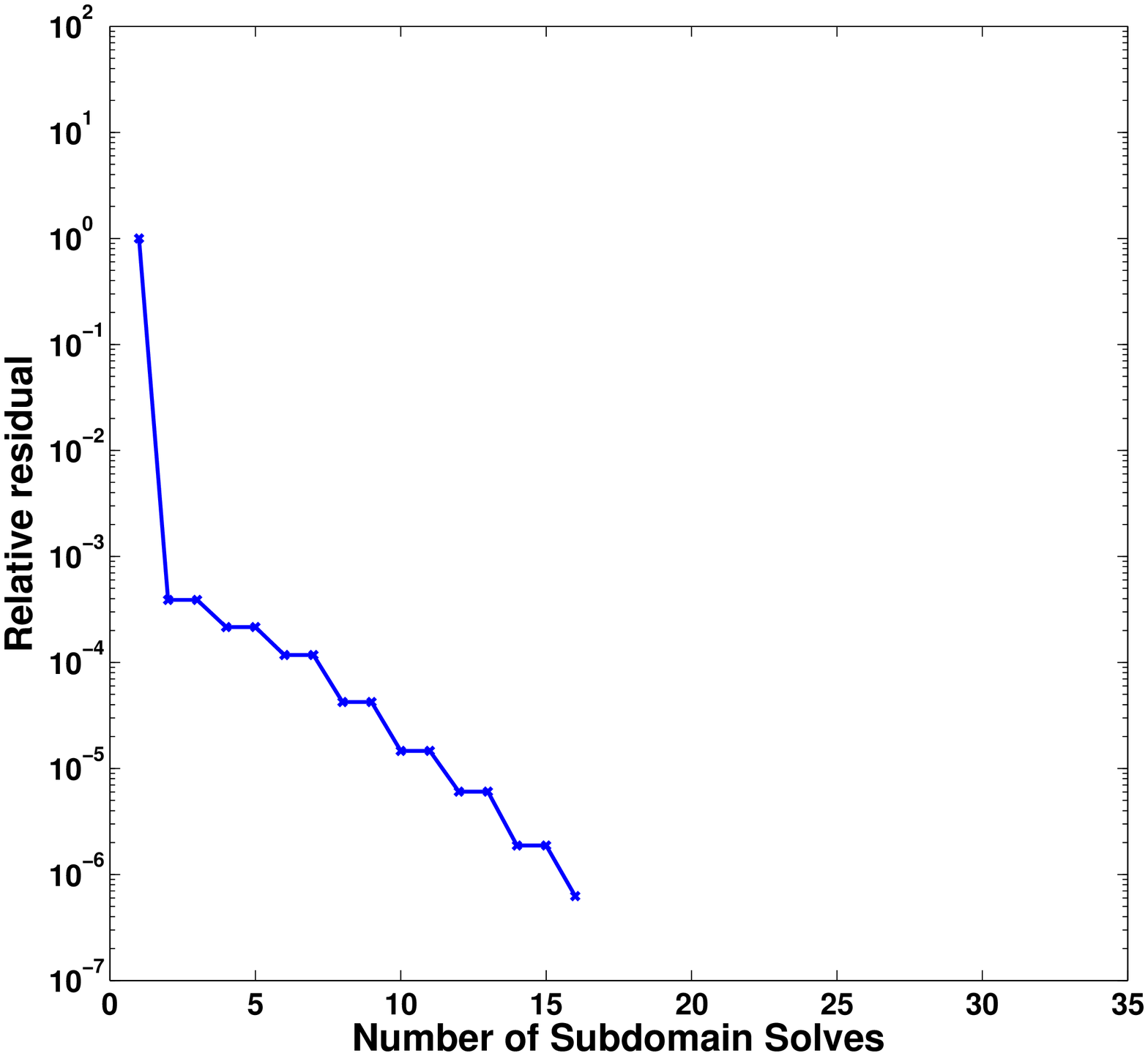} 
\end{minipage} 
\caption{The relative residuals in logarithmic scales using GMRES for Method~1 (on the left) and Method~2
         (with Opt.~2) (on the right).} 	
\label{Fig:test2relres} \vspace{-0.8cm}
\end{figure} 
%
%
%
%
%
%
\section{Conclusion}
We have given mixed formulations \All{for} two different interface problems for the diffusion equation, one
using the time-dependent Steklov-Poincar\'e operator and the other using OSWR with Robin transmission conditions
on the space-time interfaces between subdomains. The subdomain problem with Robin boundary conditions is
proved to be well-posed. A convergence proof of the OSWR algorithm in mixed form
is also given. Nonconforming time grids are considered and a suitable projection in time is employed
to exchange information \All{between subdomains} on the space-time interface. Numerical results for 2D problems
using mixed finite elements (with the lowest order Raviart-Thomas spaces on rectangles) \JR{for discretization in space} and the lowest order
discontinuous Galerkin method \JR{for discretization in time} are presented. We have analyzed numerically the performance of the two methods
for \JR{two} test cases, \JR{one} academic with two subdomains \JR{and one more} realistic with several subdomains.
We have observed that both methods handle \JR{well} the heterogeneity and nonconforming time grids, \JR{both} efficiently
\JR{preserving} the solution's accuracy in time. The two methods are also well-adapted for the simulation
of \JR{diffusive contaminant transport in and around a repository} with a special geometry and long time computations. In particular, for Method~2
we have shown that an adapted optimization technique to compute the optimized parameters is necessary \JR{if Jacobi iteration is used}.
We have pointed out the \JR{possible advantage for efficiency} of using time windows for problems with long time interval.
\JR{Work underway addresses} the coupling between advection and diffusion using operator splitting \MK{as~well~as} nonmatching grids in space. 
%
%

\bibliographystyle{siam}
\bibliography{DiffArticle} 

\end{document}